%
%
%
%

%
 
\documentclass{amsart}
\usepackage{stmaryrd}
\usepackage[all]{xy}
\usepackage{mathrsfs}
\usepackage{amscd,amssymb,color}




\newcommand{\Sdot}[1][\ssdot]{S_{#1}}

\mathchardef\varDelta="7101

\newcommand{\C}{\mathbf{C}}

\let\sma\wedge
\newcommand{\htp}{\simeq}
\renewcommand{\to}{\mathchoice{\longrightarrow}{\rightarrow}{\rightarrow}{\rightarrow}}

\newcommand{\cA}{{\mathcal A}}
\newcommand{\cB}{{\mathcal B}}
\newcommand{\cC}{{\mathcal C}}
\newcommand{\cD}{{\mathcal D}}

\newcommand{\cM}{{\mathcal M}}
\newcommand{\cS}{{\mathcal S}}
\newcommand{\cT}{{\mathcal T}}
\newcommand{\cU}{{\mathcal U}}

\let\catsymbfont\mathcal
\newcommand{\aA}{{\catsymbfont{A}}}
\newcommand{\aB}{{\catsymbfont{B}}}
\newcommand{\aC}{{\catsymbfont{C}}}
\newcommand{\aD}{{\catsymbfont{D}}}
\newcommand{\aE}{{\catsymbfont{E}}}
\newcommand{\aF}{{\catsymbfont{F}}}
\newcommand{\aG}{{\catsymbfont{G}}}
\newcommand{\aI}{{\catsymbfont{I}}}

\newcommand{\aM}{{\catsymbfont{M}}}

\newcommand{\aO}{{\catsymbfont{O}}}
\newcommand{\aP}{{\catsymbfont{P}}}

\newcommand{\aS}{{\catsymbfont{S}}}
\newcommand{\aT}{{\catsymbfont{T}}}

\newcommand{\aW}{{\catsymbfont{W}}}

\newcommand{\A}{\mathcal{A}}
\newcommand{\B}{\mathcal{B}}
\renewcommand{\C}{\catsymbfont{C}}

\renewcommand{\L}{\mathrm{L}}

\renewcommand{\O}{\mathcal{O}}

\newcommand{\bbK}{I\mspace{-6.mu}K}

\newcommand{\bE}{\mathbb{E}}

\newcommand{\bN}{{\mathbb{N}}}
\newcommand{\bP}{{\mathbb{P}}}

\newcommand{\bU}{\mathbb{U}}
\newcommand{\bZ}{\mathbb{Z}}

\def\quickop#1{\expandafter\DeclareMathOperator\csname
#1\endcsname{#1}}
\quickop{id}\quickop{Id}\quickop{colim}\quickop{hocolim}\quickop{op}
\quickop{co}\quickop{Ar}\quickop{sing}\quickop{Hom}\quickop{w}\quickop{Ho}
\quickop{ob}\quickop{diag}\quickop{Stab}\quickop{Cat}\quickop{Mot}\quickop{Mod}
\quickop{map}\quickop{Cone}\quickop{End}\quickop{Idem}\quickop{Perf}\quickop{Ind}
\quickop{Gap}\quickop{Shv}\quickop{Spaces}\quickop{cof}\quickop{ex}\quickop{perf}
\quickop{tri}\quickop{Set}\quickop{Fib}\quickop{Nat}\quickop{haut}\quickop{holim}\quickop{Tor}\quickop{Ext}\quickop{Diff}\quickop{lax}\quickop{Mult}\quickop{Ob}\quickop{cpt}\quickop{CAlg}\quickop{cf}\quickop{f}\quickop{Fin}\quickop{rex}

\DeclareMathOperator{\cc}{\mathrm{c}}

\newcommand{\sPre}{\mathrm{Pre}_\mathcal{\cS}}
\newcommand{\sflat}{\mathrm{flat}}

\newcommand{\N}{\mathrm{N}}


\newcommand{\Pre}{\mathrm{Pre}}

\newcommand{\icat}{\Cat_\i}

\newcommand{\istabcat}{\Cat_\i^{\ex}}

\newcommand{\idemstabcat}{\Cat_\i^{\perf}}
\newcommand{\idemtimes}{\otimes^{\vee}}


\newcommand{\prcat}{\mathrm{Pr}^\mathrm{L}}
\newcommand{\stabprcat}{\mathrm{Pr}^\mathrm{L}_\mathrm{st}}
\newcommand{\stabcgprcat}{\mathrm{Pr}^\mathrm{L}_{\mathrm{st},\omega}}

\newcommand{\symstabprcat}{\mathrm{CAlg}(\stabprcat)}

\newcommand{\add}{\mathrm{add}}
\newcommand{\loc}{\mathrm{loc}}
\newcommand{\Loc}{\mathrm{Loc}}
\newcommand{\Fun}{\mathrm{Fun}} 
\newcommand{\Map}{\mathrm{Map}}
\newcommand{\bbS}{\mathbb{S}}

\newcommand{\ispec}{\aS_\infty}
\newcommand{\spec}{\aS}
\newcommand{\Spcat}{\Cat_\cS}
\newcommand{\SpcatM}{\Cat_{\aM}}
\newcommand{\Scat}{\Cat_\Delta}

\newcommand{\Alg}{\mathrm{Alg}}

\newcommand{\ie}{{i.e.,}\ }

\newcommand{\too}{\longrightarrow}

\renewcommand{\i}{\infty}

\newcommand{\Motadd}{\cM_{\mathrm{add}}}

\newcommand{\Motloc}{\cM_\mathrm{loc}}

\newcommand{\Umot}{\cU_{\mathrm{add}}}

\newcommand{\Uloc}{\cU_{\mathrm{loc}}}

\numberwithin{equation}{section}

\newtheorem{theorem}[equation]{Theorem}
\newtheorem*{theorem*}{Theorem}

\newtheorem{corollary}[equation]{Corollary}
\newtheorem{lemma}[equation]{Lemma}
\newtheorem{proposition}[equation]{Proposition}
\theoremstyle{definition}
\newtheorem{definition}[equation]{Definition}

\newtheorem{remark}[equation]{Remark}

\newtheorem{notation}[equation]{Notation}

\xyoption{arrow}
\xyoption{matrix}
\xyoption{cmtip}
\SelectTips{cm}{}

\newdir{ >}{{}*!/-5pt/\dir{>}}

\bibliographystyle{plain}

\begin{document}

\title[Uniqueness of the multiplicative cyclotomic trace]{Uniqueness of the multiplicative\\ cyclotomic trace}

\author{Andrew J. Blumberg}
\address{Department of Mathematics, University of Texas,
Austin, TX \ 78712}
\email{blumberg@math.utexas.edu}
\thanks{A.~J.~Blumberg was supported in part by NSF grant DMS-0906105 and NSF grant DMS-1151577.}
\author{David Gepner}
\address{Department of Mathematics, Purdue University, West Lafayette
IN 47907}
\email{dgepner@math.purdue.edu}
\thanks{}
\author{Gon{\c c}alo~Tabuada}
\address{Department of
  Mathematics, MIT, Cambridge, MA 02139, United States and
  Departamento de Matematica e CMA, FCT-UNL, Quinta da Torre,
  2829-516, Caparica, Portugal.}
\email{tabuada@math.mit.edu}
\thanks{G.~Tabuada was partially supported by the NEC Award-2742738
  and by the Portuguese Foundation for Science and Technology through
  the project PEst-OE/MAT/UI0297 (CMA).}
\subjclass[2000]{19D10, 19D25, 19D55, 18D20, 55N15}

\keywords{Higher algebraic $K$-theory, stable $\i$-categories,
spectral categories, topological cyclic homology, cyclotomic trace
map}

\begin{abstract}
Making use of the theory of noncommutative motives, we characterize
the topological Dennis trace map as the unique {\em multiplicative}
natural transformation from algebraic $K$-theory to topological
Hochschild homology ($THH$) and the cyclotomic trace map as the unique
multiplicative lift through topological cyclic homology ($TC$).
Moreover, we prove that the space of all multiplicative structures on
algebraic $K$-theory is contractible.

We also show that the algebraic $K$-theory functor from small stable
$\i$-categories to spectra is lax symmetric monoidal, which in
particular implies that $E_n$ ring spectra give rise to $E_{n-1}$ ring
algebraic $K$-theory spectra.  Along the way, we develop a
``multiplicative Morita theory'', establishing a symmetric monoidal
equivalence between the $\i$-category of small idempotent-complete
stable $\i$-categories and the Morita localization of the
$\i$-category of spectral categories.
\end{abstract}

\maketitle
\setcounter{tocdepth}{1}
\tableofcontents

\section{Introduction}\label{sec:intro}

Algebraic $K$-theory provides rich invariants of rings, schemes, and
manifolds, encoding information reflecting arithmetic, geometry, and
topology.  The algebraic $K$-theory of a ring or scheme captures
information about classical arithmetic invariants (e.g., the Picard
and Brauer groups)~\cite{TT}; for a manifold $M$, the algebraic
$K$-theory of $\Sigma^{\infty}_+ \Omega M$ (Waldhausen's $A$-theory)
is closely related to stable pseudo-isotopy theory and $B\Diff(M)$,
the classifying space of the space of diffeomorphisms of
$M$~\cite{WaldA2}.  These seemingly disparate examples are unified by
the perspective that algebraic $K$-theory is a functor of {\em stable
categories}; e.g., a suitable enhancement of the derived category of a
scheme, or the category of modules over a ring spectrum.  This
viewpoint was initiated by Thomason-Trobaugh~\cite{TT}; see
also~\cite{BGT}.

For a commutative ring (or more generally a scheme) $R$, the derived
category of $R$ possesses the additional structure of a symmetric
monoidal tensor product.  Similarly, the category of modules over an
$E_\infty$ ring spectrum is symmetric monoidal.  In this situation,
the algebraic $K$-theory spectrum inherits the structure of a
$E_\infty$ ring spectrum~\cite{BM5, ElmendorfMandell, MayPerm, Wald}.
These results have been important both as a source of structured ring
spectrum models for geometric spectra (e.g., topological $K$-theory) as
well as a source of input to calculation of algebraic $K$-groups.

The main computational tool for understanding algebraic $K$-theory of
rings is the cyclotomic trace map $K \to TC \to THH$ from algebraic
$K$-theory to topological cyclic and Hochschild
homology \cite{BokstedtHsiangMadsen}; this map can be regarded as a
spectrum-level enhancement of the Dennis trace $K \to HC^{-} \to HH$,
and induces an equivalence between relative $K$-theory and
relative $TC$~\cite{DundasFib, McCarthyFib} in many cases.
Multiplicative structures play an essential role in these
calculations, as both $THH$ and $TC$ are commutative ring spectra when
applied to commutative rings and the cyclotomic trace is compatible
with this multiplicative structure (e.g., see \cite[\S
1]{HesselholtMadsenwitt} and \cite[\S 6]{GeisserHesselholt}). 

However, the existing constructions of the multiplicative cyclotomic
trace in the literature are very complicated and apply only in limited
settings.  Hence, it would be very useful to have a general
characterization and construction of the multiplicative cyclotomic
trace map in terms of a simple universal property.  In this paper,
making use of the theory of noncommutative motives, we resolve this
problem: Roughly speaking, the multiplicative topological Dennis trace
is the unique multiplicative natural transformation from algebraic
$K$-theory to $THH$ (see Theorem~\ref{cor:mainone}), and the
multiplicative cyclotomic trace is the unique lifting to $TC$ (see
Theorem~\ref{cor:maintwo}).

\subsection*{Statement of results}

Let $\idemstabcat$ denote the $\i$-category of small
idempotent-complete stable $\i$-categories and exact functors. Recall
from \cite[\S6]{BGT} that a functor $E \colon \idemstabcat \to \cD$
with values in a stable presentable $\i$-category $\cD$, is called an
{\em additive invariant} if it preserves filtered colimits and sends
split-exact sequences of stable $\i$-categories to cofiber sequences of
spectra.  When $E$ moreover sends all exact sequences of
stable $\i$-categories to cofiber sequences, we say it is a {\em localizing
invariant}.  In \cite{BGT} we produced stable presentable
$\i$-categories $\Motadd$ and $\Motloc$ of noncommutative motives and
functors
\[
\xymatrix{
\Umot \colon \idemstabcat \to \Motadd && \Uloc \colon \idemstabcat \to \Motloc
}
\]
characterized by the following universal properties: given any
stable presentable $\infty$-category $\cD$, there are induced
equivalences
\begin{equation}\label{eq:equivalenceadd}
(\Umot)^* \colon \Fun^{\L}(\Motadd,\aD) \stackrel{\sim}{\too} \Fun_{\mathrm{add}}(\idemstabcat,\aD)
\end{equation}
\begin{equation}\label{eq:equivalenceloc}
(\Uloc)^* \colon \Fun^{\L}(\Motloc,\aD) \stackrel{\sim}{\too} \Fun_{\mathrm{loc}}(\idemstabcat,\aD)\,,
\end{equation}
where the left-hand sides denote the $\i$-categories of
colimit-preserving functors and the right-hand sides the $\i$-categories
of additive and localizing invariants.

As stable $\i$-categories, $\Motadd$ and $\Motloc$ carry a natural
enrichment in spectra; see \cite[\S 4]{BGT}.  In \cite{BGT} we showed
that the connective algebraic $K$-theory spectrum functor $K(-)$ and  
the non-connective algebraic $K$-theory spectrum functor $\bbK(-)$ become 
co-representable in $\Motadd$ and $\Motloc$ respectively.  More
precisely, given any idempotent-complete small stable $\i$-category
$\aA$, there are equivalences of spectra 
\begin{eqnarray}\label{eq:corep}
&\Map(\Umot(\ispec^{\omega}), \Umot(\aA)) \simeq K(\aA) 
 &
\Map(\Uloc(\ispec^{\omega}), \Uloc(\aA)) \simeq \bbK(\aA),
\end{eqnarray}
where $\ispec$ denotes the stable $\i$-category of spectra and
$\ispec^{\omega}$ the (essentially small) stable subcategory of
compact objects in $\spec$ (see Theorem 1.3 in~\cite{BGT}). Our first
main result is the following:

\begin{theorem} (see Theorem~\ref{thm:sym})
\label{thm:isym}
The $\i$-categories $\Motadd$ and $\Motloc$ carry natural symmetric
monoidal structures making the functors $\Umot$ and $\Uloc$ symmetric
monoidal.  The tensor units are $\Umot(\ispec^{\omega})$ and
$\Uloc(\ispec^{\omega})$ respectively. 
\end{theorem}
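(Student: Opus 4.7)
The plan is to transfer a symmetric monoidal structure from $\idemstabcat$ itself to its motivic localizations $\Motadd$ and $\Motloc$, by realizing the latter as symmetric monoidal reflective localizations of a Day convolution $\infty$-category of spectral presheaves.

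First I would equip $\idemstabcat$ with a symmetric monoidal structure. Under the fully faithful embedding $\Ind\colon \idemstabcat \hookrightarrow \stabprcat$, whose essential image consists of the compactly generated presentable stable $\infty$-categories and whose morphisms are compact-object-preserving left adjoints, the standard Lurie tensor product on $\stabprcat$ restricts to the image: the tensor product of two compactly generated presentable stable $\infty$-categories is again compactly generated, and the unit $\spec$ has compact objects precisely $\ispec^{\omega}$. This yields a symmetric monoidal structure on $\idemstabcat$ with tensor unit $\ispec^{\omega}$, characterized by $\aA \otimes \aB$ being the universal recipient of a functor $\aA \times \aB \to \aA \otimes \aB$ exact in each variable.

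Next, recalling from \cite{BGT} that $\Motadd$ (resp.\ $\Motloc$) is constructed as an accessible reflective localization of the spectral presheaf $\infty$-category $\PPre_{\spec}(\idemstabcat)$ at a small set $S_{\add}$ (resp.\ $S_{\loc}$) of maps encoding preservation of filtered colimits together with split-exact (resp.\ exact) sequences, I would endow $\PPre_{\spec}(\idemstabcat)$ with the Day convolution symmetric monoidal structure extending the one on $\idemstabcat$. By the standard symmetric monoidal localization machinery (Higher Algebra, Proposition~4.1.7.4), to obtain symmetric monoidal structures on $\Motadd$ and $\Motloc$ such that $\Umot$ and $\Uloc$ become symmetric monoidal, it suffices to verify that the strongly saturated classes generated by $S_{\add}$ and $S_{\loc}$ are each stable under tensoring with arbitrary representables.

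By inspecting the generators, this compatibility reduces to the statement that for every fixed $\aA \in \idemstabcat$, the endofunctor $\aA \otimes (-)$ on $\idemstabcat$ preserves filtered colimits and split-exact (resp.\ exact) sequences of small idempotent-complete stable $\infty$-categories. Filtered colimits are preserved because, under $\Ind$, the tensor product on $\stabprcat$ is colimit-preserving in each variable. Split-exact sequences are preserved because any split exact sequence remains split after applying any additive functor. The main obstacle is preservation of general exact sequences: here one passes through $\Ind$, under which exact sequences in $\idemstabcat$ correspond to cofiber sequences in $\stabprcat$, and $\Ind(\aA) \otimes (-)$ is a colimit-preserving endofunctor of $\stabprcat$, hence preserves cofibers. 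Granted this verification, the induced symmetric monoidal structures on $\Motadd$ and $\Motloc$ automatically have tensor units $\Umot(\ispec^{\omega})$ and $\Uloc(\ispec^{\omega})$, as these are the images of the unit of $\idemstabcat$ under the symmetric monoidal functors $\Umot$ and $\Uloc$.
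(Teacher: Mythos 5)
Your strategy is fundamentally the same as the paper's: pass to a presheaf category equipped with the Day convolution symmetric monoidal structure, then invoke the symmetric monoidal localization machinery. However, there is a genuine gap in your account of the construction. You write the presheaf category as $\PPre_{\spec}(\idemstabcat)$, but $\idemstabcat$ is large, so this is not a presentable $\infty$-category to which the accessible-reflective-localization framework applies. The construction of \cite{BGT}, and of this paper, instead takes presheaves on the essentially small full subcategory $(\idemstabcat)^{\omega}$ of \emph{compact objects} of $\idemstabcat$, i.e.\ those $\aA$ for which $\map(\aA,-)$ commutes with filtered colimits in $\idemstabcat$. For the Day convolution to make sense on $\Pre((\idemstabcat)^{\omega})$, one must first verify that $(\idemstabcat)^{\omega}$ is a full \emph{symmetric monoidal} subcategory of $\idemstabcat$, i.e.\ that $\aA\otimes^{\vee}\aB$ is again a compact object when $\aA$ and $\aB$ are. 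This is the content of Propositions~\ref{compactclosed} and~\ref{compactsymmon} in the paper: the first shows $\Fun^{\ex}(\aA,-)$ preserves filtered colimits for $\aA$ compact, and the second uses this together with $\map(\aA\otimes^{\vee}\aB,-)\simeq\map(\aA,\Fun^{\ex}(\aB,-))$ to conclude. Your proposal does not address this step, and it is not automatic; without it there is no small symmetric monoidal indexing category on which to build the Day convolution.

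The rest of your argument --- reducing compatibility of the monoidal structure with the localization to stability of the strongly saturated class under tensoring with representables, and verifying this by checking that $\aA\otimes^{\vee}(-)$ preserves split-exact (resp.\ exact) sequences --- closely matches Propositions~\ref{prop:compat} and~\ref{JLoc}. If anything, your account of the exact-sequence preservation is slightly more explicit than the paper's, which simply asserts that $\aD\otimes\aA\to\aD\otimes\aB\to\aD\otimes\aC$ is exact when $\aA\to\aB\to\aC$ is; your remark that this can be checked by passing to $\Ind$ and using that $\Ind(\aD)\otimes(-)$ is a colimit-preserving endofunctor of $\stabprcat$ is a reasonable sketch of that step. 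The identification of the tensor unit with $\Umot(\ispec^{\omega})$ (resp.\ $\Uloc(\ispec^{\omega})$) then follows exactly as you say. So the proposal is on the right track but needs the compactness-closure lemma filled in.
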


Here the symmetric monoidal structure is induced from the convolution
symmetric monoidal structure on the $\infty$-category $\Pre((\idemstabcat)^{\omega})$ of
presheaves indexed on the symmetric monoidal $\infty$-category of compact objects in $\idemstabcat$.  Similar considerations in the ``dual''
setting of covariant functors from $\Motadd$ and $\Motloc$ to spectra
give rise to symmetric monoidal structures on the $\i$-categories of
additive and localizing invariants:

\begin{theorem}\label{thm:maintwo} (see Theorem~\ref{thm:bodymaintwo})
The $\i$-categories of additive and localizing invariants,
$\Fun_\add(\idemstabcat,\ispec)$ and $\Fun_\loc(\idemstabcat,\ispec)$, are
symmetric monoidal $\i$-categories.
That is, they are the underlying $\i$-categories of
symmetric monoidal presentable stable $\i$-categories
$\Fun_\add(\idemstabcat,\ispec)^{\otimes}$ and
$\Fun_\loc(\idemstabcat,\ispec)^{\otimes}$.  The tensor units are the
connective and non-connective algebraic $K$-theory functors $K$ and
$\bbK$.
\end{theorem}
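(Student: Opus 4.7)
The plan is to transport the symmetric monoidal structures provided by Theorem~\ref{thm:isym} across the equivalences~\eqref{eq:equivalenceadd} and~\eqref{eq:equivalenceloc} by first equipping $\Fun^{\L}(\Motadd,\cS)$ and $\Fun^{\L}(\Motloc,\cS)$ with Day-convolution symmetric monoidal structures, and then identifying the tensor units via the corepresentability formulas~\eqref{eq:corep}.

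By Theorem~\ref{thm:isym}, $\Motadd$ and $\Motloc$ are commutative algebras in the symmetric monoidal $\i$-category $\stabprcat$. Choose a regular cardinal $\kappa$ large enough that the unit $\Umot(\ispec^{\omega})$ is $\kappa$-compact, $\Motadd$ is $\kappa$-compactly generated, and the tensor product on $\Motadd$ restricts to a symmetric monoidal structure on the essentially small stable subcategory $(\Motadd)^{\kappa}$ of $\kappa$-compact objects. Then one has an equivalence $\Fun^{\L}(\Motadd,\cS)\simeq \Fun^{\ex}((\Motadd)^{\kappa},\cS)$, and the Day convolution construction equips the right hand side with a presentable stable symmetric monoidal structure. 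Transporting this across~\eqref{eq:equivalenceadd} endows $\Fun_{\add}(\idemstabcat,\cS)$ with the desired symmetric monoidal enhancement; the same argument applied to $\Motloc$ handles the localizing case.

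To identify the tensor unit, recall that the unit of the Day convolution structure on $\Fun^{\ex}((\Motadd)^{\kappa},\cS)$ is the functor corepresented by the unit of $(\Motadd)^{\kappa}$, namely $\Map(\Umot(\ispec^{\omega}),-)\colon \Motadd\to \cS$. Under the equivalence~\eqref{eq:equivalenceadd}, this corresponds to the composite $\Map(\Umot(\ispec^{\omega}),-)\circ \Umot\colon \idemstabcat\to \cS$, which by the corepresentability formula~\eqref{eq:corep} is precisely the connective algebraic $K$-theory functor $K$. The identical argument applied to $\Motloc$ and~\eqref{eq:equivalenceloc} identifies the tensor unit of $\Fun_{\loc}(\idemstabcat,\cS)$ with the non-connective algebraic $K$-theory functor $\bbK$.

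The principal technical hurdle is verifying that the symmetric monoidal structure on $\Motadd$ (and $\Motloc$) restricts, for some sufficiently large $\kappa$, to a symmetric monoidal structure on the subcategory of $\kappa$-compact objects: equivalently, that the tensor unit is $\kappa$-compact and that $\kappa$-compact objects are closed under tensor products. This reduces to tracking the convolution tensor product on $\PPre((\idemstabcat)^{\omega})$ from Theorem~\ref{thm:isym} through the Bousfield localization defining $\Motadd$ (respectively $\Motloc$), and checking that tensoring with compact generators preserves the relevant local equivalences. Once this compatibility is in place, the Day convolution formalism of higher algebra delivers the required presentable stable symmetric monoidal enhancement, and the remaining assertions follow by transport of structure along the universal equivalences~\eqref{eq:equivalenceadd} and~\eqref{eq:equivalenceloc}.
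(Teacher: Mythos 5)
Your proposal is correct in outline but follows a genuinely different route from the paper. The paper works directly on the functor-category side: it observes (via the identification $\Fun^\L(\Stab(\Pre(\C)),\cS)\simeq\Stab(\Pre(\C^{\op}))$, i.e.\ Corollary~\ref{cor:dualmon} applied to $\C=(\idemstabcat)^{\omega}$) that $\Fun((\idemstabcat)^{\omega},\cS)$ carries a Day-convolution symmetric monoidal structure, and then uses \cite[2.2.1.1]{HA} to check that the full subcategories $\Fun_{\add}(\idemstabcat,\cS)$ and $\Fun_{\loc}(\idemstabcat,\cS)$ are compatible (reflective) localizations, so they inherit the structure. You instead first pass to the source side: you use Theorem~\ref{thm:isym} to conclude that $\Motadd$ and $\Motloc$ are already compactly generated symmetric monoidal, restrict to $\kappa$-compact objects, and form Day convolution on $\Fun^{\ex}((\Motadd)^{\kappa},\cS)\simeq\Ind(((\Motadd)^{\kappa})^{\op})$. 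The two routes produce the same symmetric monoidal structure, but the burdens are distributed differently: the paper must verify that the Day convolution on $\Fun((\idemstabcat)^{\omega},\cS)$ is compatible with the additive and localizing Bousfield localizations (this is essentially dual to Propositions~\ref{prop:compat} and \ref{JLoc}), whereas you must verify that $\Motadd$ and $\Motloc$ are $\kappa$-compactly generated with $\kappa$-compact objects closed under $\otimes$ and containing the unit. You correctly flag this as the key technical point; note that for $\Motadd$ one can take $\kappa=\omega$ since the localizing set $\aE$ consists of maps between compact objects (images of $(\idemstabcat)^{\omega}$ under Yoneda and cofibers thereof), and closure of compacts under $\otimes$ then follows from Proposition~\ref{compactsymmon} together with the fact that $\Umot$ is symmetric monoidal. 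Your identification of the tensor unit via~\eqref{eq:corep} agrees with what the paper leaves implicit.
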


Theorem~\ref{thm:maintwo} allows us to study $E_{n}$ algebras and
$E_{n}$-maps in $\Fun_{\add}(\idemstabcat,\ispec)$ and
$\Fun_{\loc}(\idemstabcat, \ispec)$.  Since algebraic $K$-theory is the
tensor unit, a consequence of Theorem~\ref{thm:maintwo} and the fact that 
the space of multiplicative maps out of the unit is contractible is
the following strong uniqueness result: 

\begin{corollary}\label{thm:mainthree} (see
Corollary~\ref{cor:bodymainthree})
There exists a unique $E_\infty$ algebra structure on $K$, viewed as
an object of the symmetric monoidal $\i$-category
$\Fun_\add(\idemstabcat, \ispec)^{\otimes}$.  Furthermore, for any
$0\leq n\leq\i$ and any $E_n$ algebra $F$, the space of $E_n$ algebra
maps from $K$ to $F$ is contractible.  Analogous statements hold for
$\bbK$.
\end{corollary}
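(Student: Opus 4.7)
The plan is to reduce the corollary to a general fact about symmetric monoidal $\infty$-categories, now that Theorem~\ref{thm:maintwo} has identified $K$ (resp. $\bbK$) with the tensor unit of $\Fun_\add(\idemstabcat,\ispec)^\otimes$ (resp. $\Fun_\loc(\idemstabcat,\ispec)^\otimes$). The general fact I will invoke is that, in any symmetric monoidal $\infty$-category $\cC^\otimes$, the tensor unit $\mathbf{1}$ is the initial object of $\Alg_{E_n}(\cC^\otimes)$ for every $0\le n\le\infty$; this is one of the structural results of Lurie's Higher Algebra (for $n=\infty$ the statement is that the unit is initial in $\CAlg(\cC^\otimes)$, and the $E_n$ version is proved by the same argument).

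Once this is in hand, the corollary follows formally. The space of $E_n$-algebra structures on $K$ is, by definition, the fiber over $K$ of the forgetful functor $\Alg_{E_n}(\Fun_\add(\idemstabcat,\ispec)^\otimes)\to\Fun_\add(\idemstabcat,\ispec)$. Since $K$ is the tensor unit, this fiber contains the canonical (trivial) $E_n$-algebra structure $K^{\mathrm{triv}}$, and the initiality of $K^{\mathrm{triv}}$ in $\Alg_{E_n}$ implies that this fiber is contractible: any other $E_n$-algebra $F$ whose underlying object is $K$ admits a unique $E_n$-algebra map $K^{\mathrm{triv}}\to F$, and being a map between two $E_n$-algebra refinements of the same underlying $K$ it must be an equivalence. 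Taking $n=\infty$ gives the first assertion. For the second assertion, given any $E_n$-algebra $F$, the mapping space $\Map_{\Alg_{E_n}}(K,F)$ is contractible simply because $K\simeq K^{\mathrm{triv}}$ is initial in $\Alg_{E_n}$. The statement for $\bbK$ is identical, using the symmetric monoidal structure on $\Fun_\loc(\idemstabcat,\ispec)^\otimes$ and the identification of $\bbK$ with its unit.

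The main point of the argument is thus the already-proved Theorem~\ref{thm:maintwo}; the corollary itself is just an application of the universal property of the unit in a symmetric monoidal $\infty$-category. The only subtlety worth flagging is the equivalence between the two formulations of contractibility mentioned above (uniqueness of the $E_n$-algebra structure versus contractibility of the mapping space from $K$), which is why I will spell out explicitly that a map of $E_n$-algebras covering the identity of the underlying object is necessarily an equivalence. No further input is required beyond citing the appropriate initiality statement from Higher Algebra.
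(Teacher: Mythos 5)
Your proposal is correct and follows essentially the same route as the paper. The paper also reduces the corollary to the identification of $K$ (resp.\ $\bbK$) with the tensor unit (Theorem~\ref{thm:maintwo}/\ref{thm:bodymaintwo}) together with the universal property of the unit in the $\i$-category of $E_n$-algebras; the latter is exactly what the paper packages in Lemma~\ref{lem:unit} and Corollary~\ref{cor:triv}, citing \cite[3.2.1.9]{HA} as you do, and the deduction appears as Proposition~\ref{prop:aux1}. The only cosmetic difference is that the paper first phrases the result in the co-representable picture $\Fun^\L(\Motadd,\ispec)$ and then transfers to $\Fun_\add(\idemstabcat,\ispec)$ via the symmetric monoidal equivalence of Theorem~\ref{thm:bodymainone}, whereas you work in $\Fun_\add$ directly.
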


By combining Theorem \ref{thm:maintwo} with the recent work of
Glasman~\cite{glasman}, we obtain the following relation between
$E_\infty$ algebras and lax symmetric monoidal functors: 

\begin{corollary}
For any presentable symmetric monoidal $\i$-category $\aD$, there are
equivalences of $\i$-categories 
\[
\Alg_{/ E_{\infty}}(\Fun_{\add}(\idemstabcat, \aD)) \stackrel{\sim}{\too}
\Fun^{\lax}_{\mathrm{add}}(\idemstabcat,\aD)
$$
$$
\Alg_{/ E_{\infty}}(\Fun_{\loc}(\idemstabcat, \aD)) \stackrel{\sim}{\too}
\Fun^{\lax}_{\mathrm{loc}}(\idemstabcat,\aD).
\]
\end{corollary}

This leads to the following sharpening of
equivalences~\eqref{eq:equivalenceadd} and~\eqref{eq:equivalenceloc}.

\begin{theorem}\label{thm:mainone}
For any presentable symmetric monoidal $\i$-category $\aD$, there are
equivalences of $\i$-categories 
\[
(\Umot)^* \colon \Fun^{\L,\lax}(\Motadd,\aD) \stackrel{\sim}{\too}
\Fun^{\lax}_{\mathrm{add}}(\idemstabcat,\aD)
\]
\[
(\Uloc)^* \colon \Fun^{\L,\lax}(\Motloc,\aD) \stackrel{\sim}{\too}
\Fun^{\lax}_{\mathrm{loc}}(\idemstabcat,\aD)\,,
\]
where the left-hand sides denote the $\i$-category of lax symmetric
monoidal colimit-preserving functors and the right-hand sides denote
the $\i$-categories of lax symmetric monoidal additive or localizing
invariants, respectively.
\end{theorem}

In order to produce lax monoidal functors of $\i$-categories
$\idemstabcat \to \ispec$, we develop a multiplicative version of Morita
theory for spectral categories.
In~\cite[\S 4]{BGT} we proved that the $\i$-category $\idemstabcat$ of
small stable idempotent-complete $\infty$-categories admits a model
given by the localization of the category $\Spcat$ of spectral
categories (categories enriched in spectra) with respect to the class
$\mathcal{W}$ of Morita equivalences.  Here recall that a spectral
functor $\aC \to \aD$ is a DK-equivalence if it induces weak equivalences
$\aC(X,Y) \to \aD(FX,FY)$ on all mapping spectra and is homotopically
essentially surjective, and a Morita equivalence if it induces a
DK-equivalence on the induced functor $\Mod(\aC) \to \Mod(\aD)$.
In particular, any stable $\i$-category can be ``rigidified'' to a
(pre-triangulated) spectral category.

For our applications herein, we generalize this rigidification result
to the multiplicative setting.  To maintain homotopical control on the
smash product of spectral categories, we use the notion of a flat
spectral category, i.e., a spectral category such that tensoring with
it preserves Morita equivalences.  Since pointwise-cofibrant spectral
categories are flat, and any flat spectral category is DK-equivalent
to a pointwise-cofibrant spectral category, we will write
$\Cat_{\aS}^{\textrm{flat}}$ for the full subcategory of
pointwise-cofibrant spectral categories.  Following \cite{HTT}, we
write $\N$ for the nerve functor from categories (or, more generally,
simplicial categories) to $\infty$-categories.
  
\begin{theorem}\label{thm:multMor} (see Theorem~\ref{thm:symcomp})
There is an equivalence of symmetric monoidal $\i$-categories between
$(\idemstabcat)^{\otimes}$ and
$(\N(\Spcat^{\sflat})[\aW^{-1}])^{\otimes}$, where $\aW$ denotes the
class of Morita equivalences.
\end{theorem}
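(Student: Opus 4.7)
The plan is to upgrade the underlying equivalence of $\infty$-categories $\N(\Spcat^{\sflat})[\aW^{-1}] \simeq \idemstabcat$, established in \cite[\S 4]{BGT}, to a symmetric monoidal equivalence by invoking the general theory of symmetric monoidal localizations of $\infty$-categories. First, I would equip $\Spcat^{\sflat}$ with the symmetric monoidal structure given by the smash product $\wedge$ of spectral categories, whose unit is the sphere spectrum viewed as a one-object spectral category. The restriction to the full subcategory of flat spectral categories is essential because $(-) \wedge \aC$ is homotopically well-behaved---and in particular preserves Morita equivalences in each variable---precisely when $\aC$ is flat.

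Second, I would verify that the class $\aW$ is compatible with this monoidal structure in the sense required by the localization machinery: if $f \colon \aA \to \aB$ is a Morita equivalence in $\Spcat^{\sflat}$ and $\aC \in \Spcat^{\sflat}$, then $f \wedge \id_{\aC}$ is again a Morita equivalence. Using the characterization of Morita equivalences via induced equivalences on categories of perfect modules, this reduces to the classical statement that extension of scalars along a flat map of spectral enrichments induces the expected equivalence on derived module categories.

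Third, an application of the symmetric monoidal localization theorem then produces a canonical symmetric monoidal structure on $\N(\Spcat^{\sflat})[\aW^{-1}]$, characterized by the universal property that symmetric monoidal functors out of it correspond to symmetric monoidal functors out of $\N(\Spcat^{\sflat})$ that invert Morita equivalences. Since $\idemstabcat$ carries its standard symmetric monoidal structure $\idemtimes$ (see Theorem~\ref{thm:isym} and the surrounding discussion), it then suffices to exhibit the functor realizing the equivalence of \cite[\S 4]{BGT} as a symmetric monoidal functor.

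Finally, I would check that the assignment $\aA \mapsto \Mod(\aA)^\omega$ implementing the underlying Morita equivalence is symmetric monoidal, i.e., that there is a natural equivalence $\Mod(\aA \wedge \aB)^\omega \simeq \Mod(\aA)^\omega \idemtimes \Mod(\aB)^\omega$ in $\idemstabcat$, with the unit correspondence $\Mod(\bS)^\omega \simeq \ispec^{\omega}$. The main obstacle lies here: identifying the intrinsic tensor product $\idemtimes$ of idempotent-complete stable $\infty$-categories with an explicit model in terms of the smash product of flat spectral categories. By generation under finite colimits and retracts, this comparison reduces to the case of free rank-one modules, where flatness is precisely what ensures that the naive combinatorial smash product computes the correct derived tensor product; the lax-symmetric-monoidal structure on the functor is present tautologically, and flatness upgrades it to strict symmetric monoidality.
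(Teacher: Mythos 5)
Your preparatory steps closely mirror the paper: equipping $\Spcat^{\sflat}$ with the smash product, verifying that Morita equivalences in $\Spcat^{\sflat}$ are stable under smashing with a flat spectral category (this is precisely what Proposition~\ref{prop:flatprops} accomplishes), and invoking symmetric monoidal localization (Corollary~\ref{cor:indufunc}). Where you diverge is in the final and hardest step, and there your argument has a real gap.

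You propose to finish by exhibiting a direct coherent equivalence
$\Mod(\aA\wedge\aB)^\omega \simeq \Mod(\aA)^\omega \idemtimes \Mod(\aB)^\omega$
and assert that the lax symmetric monoidal structure on $\aA\mapsto\Mod(\aA)^\omega$ is ``present tautologically,'' with flatness upgrading it to a strong one. But the product $\idemtimes$ on $\idemstabcat$ is defined abstractly (as the compact objects of the $\prcat$-tensor product of $\Ind$-completions), whereas $\wedge$ is a concrete point-set construction; producing the required homotopy-coherent natural equivalences, associativity and commutativity data, and the higher coherences that a symmetric monoidal functor of $\i$-categories requires is exactly the technical crux, and it is not tautological. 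Your reduction ``to free rank-one modules'' gives the comparison on objects but does not furnish the coherence data at the $\i$-categorical level.

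The paper avoids this by a different strategy. It characterizes $(\idemstabcat)^\otimes$ by a universal property: it is the symmetric monoidal localization of $\icat^\otimes$ along the functor $-\otimes\ispec^\omega$, which is in turn determined (via \cite[6.3.2.10, 6.3.2.15]{HA}) by the description of $\stabprcat^\otimes$ as the monoidal localization of $\prcat^\otimes$ at $-\otimes\ispec$. The paper then constructs an explicit point-set model of $-\otimes\ispec^\omega$ as a composite $\Phi\colon\Scat^{\sflat}\to(\Scat^{\sflat})_*\xrightarrow{\Sigma^\infty}\Spcat^{\sflat}$, applies Corollary~\ref{cor:indufunc} to get a map of $\i$-operads $\icat^\otimes\to\N(\Spcat^{\sflat})[\tilde W^{-1}]^\otimes$ (with $\tilde W$ the DK-equivalences), and then localizes at Morita equivalences via the strong symmetric monoidal endofunctor $\Psi$ (taking perfect modules). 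Since the resulting monoidal localization $L_\Psi(\N(\Spcat^{\sflat})[\tilde W^{-1}])^\otimes$ receives a symmetric monoidal functor from $\icat^\otimes$ that agrees with $-\otimes\ispec^\omega$ on underlying $\i$-categories, the universal property forces the identification with $(\idemstabcat)^\otimes$. This moves the coherence burden into general categorical machinery rather than into an explicit compatibility check. If you want to pursue your direct route, you must either supply the coherence data explicitly or recast the comparison in a way (as the paper does) that lets a universal property do that work for you.
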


Since the $\i$-category associated to $\Spcat^{\sflat}$ is a model for
the $\i$-category of spectral categories and Morita equivalences, we
use $(\N(\Spcat^{\sflat})[\aW^{-1}])^{\otimes}$ as a specific model of
the symmetric monoidal $\i$-category of spectral categories and Morita
equivalences.  Hence, Theorem~\ref{thm:multMor} allows us to describe
multiplicative objects in $\Fun_{\add}(\idemstabcat, \ispec)$ and 
$\Fun_{\loc}(\idemstabcat, \ispec)$ as functors from spectral categories
to spectra.  Specifically, we prove in section~\ref{sec:thhmult} that
suitable homotopical point-set lax symmetric monoidal functors from
spectral categories to spectra give rise to $E_\infty$ algebras in
$\Fun_{\add}(\idemstabcat, \ispec)$ and $\Fun_{\loc}(\idemstabcat, \ispec)$.

\begin{theorem}\label{thm:new} (see Theorem~\ref{thm:laxdescends})
Let $E$ be a lax symmetric monoidal functor from spectral categories
to spectra.  Further assume that $E$ preserves Morita equivalences between
flat spectral categories and that the induced functor
$\tilde{E} \colon \idemstabcat \to \ispec$ is an additive invariant.
Then $\tilde{E}$ naturally extends to an $E_\i$ algebra object of
$\Fun_\add(\idemstabcat, \ispec)^{\otimes}$.  The analogous results
for localizing invariants hold.
\end{theorem}

Our main example of a functor $E$ satisfying the hypotheses of
Theorem~\ref{thm:new} is topological Hochschild homology; see
corollary~\ref{cor:thheinf}.

\subsection*{Applications}

Recall that one of the most interesting applications of the theory
developed in \cite{BGT} was the proof that the set of homotopy classes
of natural transformations from $K$ to $THH$ is isomorphic to $\bZ$,
with the topological Dennis trace corresponding to $1 \in \bZ$.  The
following consequence of Corollary~\ref{thm:mainthree} and
Theorem~\ref{thm:new}, which is our main application, is a significant
sharpening of this result.  

\begin{theorem}\label{cor:mainone} (see Theorem~\ref{thm:cormainone}) 
The space of maps of $E_\infty$ algebras from $K$ to $THH$ in
$\Fun_\add(\idemstabcat, \ispec)^{\otimes}$ is contractible.
Equivalently, the space of natural transformations of lax symmetric
monoidal functors from $K \to THH$ in
$\Fun^{\lax}_{\mathrm{add}}(\idemstabcat,\aS_\infty)$ is contractible.  The
unique element of this space is the topological Dennis trace map. The
analogous result holds for $\bbK$.
\end{theorem}

Although $TC$ is not an additive invariant (as it does not preserve
filtered colimits), we can extend this result to an identification of
the cyclotomic trace.  Specifically, $TC$ can be described as
$\holim_n TC^n$, where the $TC^n$ are additive invariants, and this
characterization gives rise to the following result.

\begin{theorem}\label{cor:mainoneprime} (see Theorem~\ref{thm:cormaintwo})
The space of maps of $E_\infty$ algebras from $K \to TC^n$ in
$\Fun_\add(\idemstabcat, \ispec)^{\otimes}$ is contractible.
Equivalently, the space of natural transformations of lax symmetric
monoidal monoidal functors from $K \to TC^n$ in
$\Fun^{\lax}_{\mathrm{add}}(\idemstabcat,\aS_\infty)$ is contractible.  The
unique homotopy class of maps of $E_\infty$ algebras in
$\Fun(\idemstabcat, \ispec)^{\otimes}$ from $K$ to $TC$ that restrict
to maps of $E_\infty$ algebras $K \to TC^n$ is the cyclotomic trace.
\end{theorem}

Our second application, which generalizes results
of~\cite{BM5,ElmendorfMandell}, is the following: 

\begin{theorem}\label{thm:ikmon} (see Proposition~\ref{prop:kmonoidal}
and Corollary~\ref{cor:kmon})
The algebraic $K$-theory functors are lax symmetric monoidal as
functors from $\idemstabcat$ to $\ispec$.  In particular, for every
$E_n$ object $\aA$ in $\idemstabcat$, with $0\leq n\leq\infty$,
$K(\aA)$ and $\bbK(\aA)$ are $E_n$ ring spectra.
\end{theorem}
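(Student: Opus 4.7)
The plan is to realize the algebraic $K$-theory functors as compositions of (lax) symmetric monoidal functors, exploiting the fact that they are corepresented by the tensor units in the symmetric monoidal $\infty$-categories of motives. By the corepresentability equivalences~\eqref{eq:corep} combined with the identification of the tensor units supplied by Theorem~\ref{thm:isym}, connective $K$-theory factors as
\[
\idemstabcat \xrightarrow{\;\Umot\;} \Motadd \xrightarrow{\;\Map_{\Motadd}(\mathbf{1},-)\;} \ispec,
\]
and similarly $\bbK$ factors through $\Uloc$ and $\Map_{\Motloc}(\mathbf{1},-)$. Theorem~\ref{thm:isym} endows the first arrow in each composition with a strong symmetric monoidal structure, so the entire argument reduces to equipping the ``global sections'' functor $\Map(\mathbf{1},-)$ with a canonical lax symmetric monoidal refinement valued in spectra.

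This is a formal feature of any symmetric monoidal $\infty$-category: the tensor unit $\mathbf{1}$ carries a canonical $E_\infty$-coalgebra structure induced by the unit isomorphism $\mathbf{1}\otimes\mathbf{1}\simeq \mathbf{1}$, so mapping out of it produces a lax symmetric monoidal functor. Since $\Motadd$ and $\Motloc$ are stable presentable symmetric monoidal $\infty$-categories, they are canonically enriched in spectra, and the lax structure on $\Map(\mathbf{1},-)$ is implemented (informally) by the composite
\[
\Map(\mathbf{1},X)\otimes \Map(\mathbf{1},Y) \longrightarrow \Map(\mathbf{1}\otimes \mathbf{1},\, X\otimes Y)\simeq \Map(\mathbf{1},X\otimes Y).
\]
Composing two lax symmetric monoidal functors is again lax symmetric monoidal, yielding the desired enhancements of $K$ and $\bbK$ as functors $\idemstabcat \to \ispec$.

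The ``in particular'' statement is then immediate from the standard fact that a lax symmetric monoidal functor preserves $E_n$-algebra objects for every $0\leq n\leq \infty$: if $\aA\in \idemstabcat$ carries an $E_n$-algebra structure, then its image $K(\aA)$ (respectively $\bbK(\aA)$) inherits an $E_n$-algebra structure in $\ispec$, that is, the structure of an $E_n$-ring spectrum.

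The main obstacle is producing the lax symmetric monoidal refinement of $\Map(\mathbf{1},-)$ at the full $\infty$-categorical level rather than merely up to coherent homotopy; one must package the coalgebra structure on $\mathbf{1}$, together with the spectral enrichment coming from the presentable stable symmetric monoidal structure on $\Motadd$ and $\Motloc$, into a single map of $\infty$-operads $\mathbf{Comm}\to \Fun(\Motadd,\ispec)^{\otimes}$ (and similarly for $\Motloc$). This requires invoking the $\infty$-operadic formalism of Lurie's \emph{Higher Algebra}, where the lax symmetric monoidal enhancement of the internal-hom-out-of-the-unit functor is constructed, or alternatively constructing the structure by exhibiting $K$ as a coCartesian section of the relevant symmetric monoidal fibration.
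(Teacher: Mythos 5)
Your overall strategy matches the paper's: factor $K$ (resp.\ $\bbK$) as $\Map_{\Motadd}(\mathbf{1},-)\circ\Umot$ using the corepresentability \eqref{eq:corep} and Theorem~\ref{thm:isym}, note $\Umot$ is (strong, hence lax) symmetric monoidal, reduce to making $\Map(\mathbf{1},-)$ lax symmetric monoidal, and then invoke the standard fact that lax symmetric monoidal functors preserve $E_n$-algebra objects. That is exactly the decomposition used in Proposition~\ref{prop:kmonoidal} and Corollary~\ref{cor:kmon}.

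Where you and the paper diverge is precisely at the step you flag as ``the main obstacle.'' You propose to build the lax structure on $\Map(\mathbf{1},-)$ directly from the $E_\infty$-coalgebra structure on $\mathbf{1}$ together with the external pairing, and you note (correctly) that packaging this at the $\infty$-operadic level is nontrivial. The paper sidesteps that construction entirely: it observes that $\Map(\Umot(\ispec^\omega),-)$ preserves limits, so by the adjoint functor theorem it has a left adjoint, which (using that $\Motadd$ is stable presentable and hence tensored over $\ispec$) is identified as $\Umot(\ispec^\omega)\otimes(-)$; this left adjoint is \emph{strong} symmetric monoidal essentially by construction, and then \cite[7.3.2.7, 7.3.2.8]{HA} give that its right adjoint inherits a canonical \emph{lax} symmetric monoidal structure. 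This adjoint-functor route buys you coherence for free --- you never have to hand-assemble the higher homotopies for the lax structure maps $\Map(\mathbf{1},X)\otimes\Map(\mathbf{1},Y)\to\Map(\mathbf{1},X\otimes Y)$ --- whereas your direct approach would require constructing a map of $\infty$-operads from scratch. Both routes are legitimate, but the paper's is the one that is readily available off the shelf in \cite{HA}; if you want to complete your version you should either reduce to the adjoint picture as the paper does, or locate a reference that genuinely constructs the lax structure on $\Map(\mathbf{1},-)$ from the coalgebra structure of $\mathbf{1}$ (I am not aware of this being done explicitly in \cite{HA} in the form you describe).
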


Theorem~\ref{thm:ikmon}, combined with Lurie's
proof~\cite[8.1.2.6]{HA} of a conjecture of
Mandell~\cite[5.3]{MandellE1234}, which asserts that an
$E_{n+1}$ algebra in spectra gives rise to an $E_{n}$ category of
compact modules in $\idemstabcat$, implies that the $K$-theory of an
$E_{n+1}$ ring spectrum is an $E_{n}$ ring spectrum.  Our third
application, which is a consequence of theorem~\ref{thm:isym} and
equivalences \eqref{eq:corep}, is the following:

\begin{corollary} (see Corollary~\ref{cor:Aenriched})
\label{cor:iAenriched}
The symmetric monoidal homotopy categories $\mathrm{Ho}(\Motadd)$ and
$\mathrm{Ho}(\Motloc)$ are enriched over the symmetric monoidal
homotopy category $\mathrm{Ho}(\Mod_{A(*)})$ of $A(*)$-modules, where
$A(\ast)=K(\bbS)\simeq \bbK(\bbS)$.
\end{corollary}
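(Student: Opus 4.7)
The plan is to invoke the general principle that the underlying homotopy category of any symmetric monoidal stable presentable $\i$-category is canonically enriched, as a symmetric monoidal category, over the homotopy category of modules over the endomorphism $E_\i$-ring of its tensor unit. I would first establish the enrichment at the level of $\i$-categories and then descend to homotopy categories.

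First, I would observe that Theorem~\ref{thm:isym} makes $\Motadd$ and $\Motloc$ into symmetric monoidal stable presentable $\i$-categories with tensor units $\Umot(\ispec^{\omega})$ and $\Uloc(\ispec^{\omega})$ respectively. In any symmetric monoidal $\i$-category, the endomorphism spectrum of the tensor unit inherits a canonical $E_\i$-algebra structure (arising from the natural equivalence $\mathbf{1}\otimes\mathbf{1}\simeq\mathbf{1}$). Applying the co-representability equivalences~\eqref{eq:corep} with $\aA=\ispec^{\omega}$ identifies these endomorphism spectra:
\begin{equation*}
\End_{\Motadd}(\Umot(\ispec^{\omega}))\simeq K(\ispec^{\omega})=K(\bbS)=A(\ast),
\end{equation*}
and analogously $\End_{\Motloc}(\Uloc(\ispec^{\omega}))\simeq \bbK(\bbS)\simeq A(\ast)$, using the connectivity of $\bbS$ for the latter equivalence. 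Next, for arbitrary objects $X,Y$ of $\Motadd$ (respectively $\Motloc$), the mapping spectrum $\Map(X,Y)$ acquires a natural $A(\ast)$-module structure by pre- and post-composition with $\End(\mathbf{1})=A(\ast)$; this action is compatible with the tensor product on $\Motadd$, producing a symmetric monoidal enrichment over $\Mod_{A(\ast)}$ at the $\i$-categorical level, which descends to the claimed enrichment of $\mathrm{Ho}(\Motadd)$ (respectively $\mathrm{Ho}(\Motloc)$) over $\mathrm{Ho}(\Mod_{A(\ast)})$.

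The main point to verify is that the $E_\i$-structure on $A(\ast)$ arising intrinsically as $\End(\mathbf{1})$ matches the standard $E_\i$-structure on $K(\bbS)$ provided by Theorem~\ref{thm:ikmon}. This compatibility follows from the symmetric monoidality of $\Umot$ (and $\Uloc$) established in Theorem~\ref{thm:isym}, together with the naturality of the co-representability equivalence~\eqref{eq:corep} with respect to the tensor structure on $\idemstabcat$: the map of $E_\i$-ring spectra $\bbS=\End_{\ispec^{\omega}}(\bbS)\to\End_{\Motadd}(\Umot(\ispec^{\omega}))\simeq K(\bbS)$ is precisely the unit map of the $E_\i$-structure on $K(\bbS)$ coming from the lax symmetric monoidal functor $K$. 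This identification is expected to be the only nontrivial step; once established, the enrichment statement is formal.
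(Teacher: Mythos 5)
Your proposal is correct and follows essentially the same route as the paper: the paper also derives the enrichment from the symmetric monoidal structure on $\Motadd$ and $\Motloc$ (Theorem~\ref{thm:sym}), the identification of the endomorphism spectrum of the tensor unit with $A(*)$ via the co-representability equivalences~\eqref{eq:corep}, and the general fact that in a symmetric monoidal stable $\i$-category the mapping spectra carry an $\End(\mathbf{1})$-module structure, which then descends to homotopy categories. The additional verification you flag at the end, that the $E_\i$-structure on $\End(\mathbf{1})$ agrees with the one on $K(\bbS)$ coming from Proposition~\ref{prop:kmonoidal}, is a reasonable consistency check but not strictly required for the corollary as stated, since one may take the $E_\i$-structure on $A(*)$ to be the intrinsic one on $\End(\mathbf{1})$; the paper omits this point.
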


Our final application is a consistency result for multiplicative
structures on algebraic $K$-theory.  Associated to a spectral category
$\aC$ is its pre-triangulated spectral category $\Perf(\aC)$ of
(homotopically) compact $\aC$-modules, which has a Waldhausen
structure inherited from the projective model structure on
$\aC$-modules. There are now two possible constructions of algebraic
$K$-theory landing in the $\i$-category of spectra. This is depicted
as follows:
\[
\xymatrix{
\N(\Spcat) \ar[r]^{\Perf(-)} & \N(\Spcat) \ar[rrrrr]^{K(-)} \ar[d] &&&&& \ispec \ar@{=}[d]
\\
& \idemstabcat \ar[rr]^{\Umot(-)} && \Motadd
\ar[rrr]^{\Map(\Umot(\ispec^{\omega},-))} &&& \ispec. \\
}
\]
In \cite[\S 7]{BGT} it was proved that these two approaches (as
well as a third $\i$-categorical version of the Waldhausen
construction) are canonically equivalent.  The following corollary of
Corollary~\ref{thm:mainthree} promotes this equivalence to the
multiplicative setting. 

\begin{corollary}\label{cor:maintwo}(see
Corollary~\ref{cor:bodymaintwo})
Let $\aC$ be a symmetric monoidal spectral category, and
$\Perf(\aC)$ be the resulting symmetric monoidal category of
compact modules.  The two algebraic $K$-theory spectra described above 
are naturally equivalent as $E_\infty$ algebras in the $\i$-category
of spectra.  If $\aC$ is a monoidal spectral category, the resulting
algebraic $K$-theory spectra are naturally equivalent as
$A_\infty$ algebras in the $\i$-category of spectra. Analogous results
hold for $\bbK$.
\end{corollary}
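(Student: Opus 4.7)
The plan is to identify both constructions with the essentially unique $E_\infty$-algebra structure on $K$ furnished by Corollary~\ref{thm:mainthree}, by exhibiting each as a lax symmetric monoidal, Morita-invariant, additive functor $\N(\Spcat^\sflat) \to \ispec$ to which Theorem~\ref{thm:new} applies.

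For the motivic route, I would use Theorem~\ref{thm:multMor} to regard $\Perf \colon \N(\Spcat^\sflat) \to \idemstabcat$ as symmetric monoidal and Theorem~\ref{thm:isym} to regard $\Umot$ as symmetric monoidal with unit $\Umot(\ispec^\omega)$. Since $\Map$ out of the tensor unit is automatically lax symmetric monoidal, the composite
\[
\aC \longmapsto \Map_{\Motadd}\bigl(\Umot(\ispec^\omega),\,\Umot(\Perf(\aC))\bigr)
\]
is lax symmetric monoidal; it is Morita-invariant and additive by~\eqref{eq:corep}. For the Waldhausen route, Theorem~\ref{thm:ikmon} already refines $K \colon \idemstabcat \to \ispec$ to a lax symmetric monoidal functor, and precomposing with the symmetric monoidal $\Perf$ yields a functor $\N(\Spcat^\sflat) \to \ispec$ satisfying the hypotheses of Theorem~\ref{thm:new}.

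Both realizations thus equip the underlying functor $K$ with an $E_\infty$-algebra structure in $\Fun_\add(\idemstabcat,\ispec)^\otimes$. Since $K$ is the tensor unit of this symmetric monoidal $\i$-category, the space of $E_\infty$-algebra maps between any two such refinements is contractible by Corollary~\ref{thm:mainthree}, so the two $E_\infty$-structures on $K$ are canonically equivalent; evaluating at $\aC$ delivers the asserted equivalence of $E_\infty$-algebras in $\ispec$. The $A_\infty$ version follows identically using the $E_1$ clause of the same contractibility statement, and the $\bbK$ version follows verbatim with $\Motloc$, $\Uloc$, and $\Fun_\loc$ in place of $\Motadd$, $\Umot$, and $\Fun_\add$. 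The main subtlety is ensuring that the classical point-set multiplicative Waldhausen construction truly corresponds to a lax symmetric monoidal refinement of the $\i$-categorical $K$-theory functor in the sense required by Theorem~\ref{thm:new}; this compatibility is precisely what (the proof of) Theorem~\ref{thm:ikmon} establishes.
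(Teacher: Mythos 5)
Your high-level strategy — show that each construction yields an $E_\infty$-algebra structure on $K$ in $\Fun_\add(\idemstabcat,\ispec)^\otimes$ and then invoke the contractibility from Corollary~\ref{cor:bodymainthree} — is exactly the paper's approach, and the paper's own proof (of Corollary~\ref{cor:bodymaintwo}) is in fact even more terse than yours, dispatching the whole thing with the observation that the lax symmetric monoidal structure on $K\colon\idemstabcat\to\ispec$ is unique and hence so is its evaluation at any object.

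That said, there is a genuine confusion in your description of the two routes. Both of the routes you spell out are in fact the \emph{motivic} route. Your ``motivic route'' is $\aC\mapsto\Map_{\Motadd}(\Umot(\ispec^\omega),\Umot(\Perf(\aC)))$, and your ``Waldhausen route'' is ``$K\colon\idemstabcat\to\ispec$ refined by Theorem~\ref{thm:ikmon}, precomposed with $\Perf$'' — but the $K$-theory functor that Theorem~\ref{thm:ikmon} (Proposition~\ref{prop:kmonoidal}) refines to a lax symmetric monoidal functor \emph{is} $\Map(\Umot(\ispec^\omega),\Umot(-))$, i.e.\ the bottom row of the paper's diagram. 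The actual ``Waldhausen route'' in the corollary is the top row: the point-set $K$-theory spectrum of the pre-triangulated spectral category $\Perf(\aC)$ built from the $S_\bullet$-construction (or its variants), together with its point-set multiplicative structure \`a la Elmendorf--Mandell or May. To genuinely compare this against the motivic construction one must feed \emph{that} point-set lax symmetric monoidal functor $\N(\Spcat^\sflat)\to\aS$ into Theorem~\ref{thm:new} to obtain an $E_\infty$-algebra in $\Fun_\add(\idemstabcat,\ispec)^\otimes$; only then does Corollary~\ref{cor:bodymainthree} force agreement with the motivic $E_\infty$-structure. You flag exactly this as ``the main subtlety'' at the end, but the attribution to Theorem~\ref{thm:ikmon} is wrong — Theorem~\ref{thm:ikmon} says nothing about the Waldhausen construction. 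The correct reference is Theorem~\ref{thm:new} applied to the multiplicative Waldhausen $K$-theory functor, combined with the non-multiplicative comparison from \cite[\S 6]{BGT} to identify the underlying additive invariant with $K$.
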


Finally, we note that there has been recent work on the subject of
multiplicative structures on an $\i$-categorical model of algebraic
$K$-theory due to Barwick~\cite{Barwick}.  Naturally, the basic
results are broadly similar, but certain technical differences arise
in both the method of proof and the approach to describing the input
data for algebraic $K$-theory.

\subsection*{Acknowledgments}
The authors would like to thank Samuel Isaacson for sharing his
insights and in particular for some extremely useful suggestions. They
would also like to thank Vigleik Angeltveit for asking motivating
questions, Clark Barwick, Jacob Lurie, David Ben-Zvi, and Michael
Mandell, for many helpful discussions, and the Midwest Topology
Network for funding various trips which facilitated this research.
The authors would like also to thank an anonymous referee for a
careful reading and detailed comments and corrections which
substantially improved this paper.

\subsection*{Notations}
Throughout the article we will use the letter $\cT$ to denote the
symmetric monoidal simplicial model category of simplicial sets and
the letter $\cS$ for the symmetric monoidal simplicial model category
of symmetric spectra of simplicial sets~\cite{HSS}.  

\section{Background on spectral categories}\label{sec:background}

Our work depends on a careful analysis of the interplay between
different models of the homotopy theory of stable homotopy categories.
In this section, we briefly review the details of the model given by
spectral categories.  Other references on spectral categories
include \cite[Section~2]{BM}, \cite[Section~2.1]{BGT}, \cite[Appendix
A]{SS}, or \cite[Section 2]{Spectral}.

Recall that a {\em spectral category} $\cA$ is a category enriched
in the category $\cS$ of symmetric spectra. Concretely, it consists of the following data: 
\begin{itemize}
\item A class of objects $\mathrm{obj}(\cA)$;
\item A symmetric spectrum $\cA(x,y)$ for each ordered pair of objects $(x,y)$;
\item Composition morphisms in $\cS$
\begin{eqnarray*}
\cA(y,z) \wedge \cA(x,y) \too \cA(x,z) && x, y \in \mathrm{obj}(\cA)
\end{eqnarray*}
satisfying the usual associativity condition;
\item Unit morphisms $\bbS \to \cA(x,x), x \in \mathrm{obj}(\cA)$, satisfying the usual unit condition with respect to the above composition. 
\end{itemize}

A spectral category is called {\em small} whenever its class of
objects forms a (small) set.  A {\em spectral functor} $F\colon\cA \to \cB$
is a functor enriched over $\cS$.  Concretely, it consists of a map
$\mathrm{obj}(\cA) \to \mathrm{obj}(\cB)$ and of morphisms in $\cS$ 

\begin{eqnarray*}
F(x,y)\colon \cA(x,y) \too \cB(Fx,Fy) && x, y \in \mathrm{obj}(\cA)
\end{eqnarray*}

satisfying the usual unit and associativity conditions.

\begin{notation}
Let $\Spcat$ denote the category of small spectral categories.
\end{notation}

Given a small spectral category $\cA$, one can form a genuine category
$[\cA]$ by keeping the same set of objects and by defining
$[\cA](x,y)$ as the set of morphisms in the homotopy category
$\Ho(\cS)$ from the sphere spectrum $\bbS$ to $\cA(x,y)$. This gives
naturally rise to a well-defined functor 
\[
[-]\colon \Spcat \too \Cat
\]
with values in the category of small categories.

We now turn to the homotopy theory of spectral categories.

\begin{definition}
A spectral functor $F\colon \cA \to \cB$ is called a {\em $DK$-equivalence} if:
\begin{itemize}
\item The morphisms in $\cS$
\begin{eqnarray*}
F(x,y)\colon \cA(x,y) \too \cB(Fx,Fy) && x,y \in \mathrm{obj}(\cA)
\end{eqnarray*}
are stable equivalences of spectra;
\item The induced functor $[F] \colon [\cA] \to [\cB]$ is an
equivalence of categories. 
\end{itemize}
\end{definition}

As proved in~\cite[Thm.~5.10]{Spectral}, $\Spcat$ carries a (right
proper) Quillen model structure whose weak equivalences are the
DK-equivalences.  Moreover, the criteria
of~\cite[Thm.~2.5,Thm.~2.20]{BergerMoerdijk} implies that this model
structure is in fact cofibrantly-generated.

For the purposes of algebraic $K$-theory (and related functors), it is
convenient to work with a weaker notion of equivalence than
DK-equivalence.  Given a spectral category $\cA$, a {\em $\cA$-module}
is a spectral functor from the opposite spectral spectral category
$\cA^{\op}$ (where $\cA^{\op}(x,y)\colon =\cA(y,x)$) to the spectral category
$\cS$ of symmetric spectra. Let us denote by $\widehat{\cA}$ the
spectral category of $\cA$-modules. As explained in \cite[A.1.1]{SS},
$\widehat{\cA}$ carries a (combinatorial) spectral model structure in
which the weak equivalences are the pointwise stable equivalences and
the fibrations are the pointwise fibrations. In what follows we will
denote by $\cD(\cA)$ the {\em derived category} of $\cA$, \ie the
homotopy category $\Ho(\widehat{\cA})$ associated to this model
structure. Note that one has a (fully faithful) spectral Yoneda
embedding
\begin{eqnarray*}
\cA \too \widehat{\cA} && z \mapsto \cA(-,z)\,.
\end{eqnarray*}

Let $\cD_{\tri}(\cA)$ denote the smallest triangulated subcategory of
$\cD(\cA)$ containing the image of $\cA$ under the Yoneda embedding,
and $\cD_{\perf}(\cA)$ the smallest thick triangulated subcategory of
$\cD(\cA)$ containing the image of $\cA$ under the Yoneda embedding. 

Note that every spectral functor $F\colon \cA \to \cB$ gives rise to a 
restriction/extension Quillen adjunction  
\[
\xymatrix{
\widehat{\cB}\ar@<1ex>[d]^-{F^\ast} \\
\widehat{\cA}\ar@<1ex>[u]^-{F_!}\,.
}
\]
We hence obtain a total left-derived functor $\mathbb{L}
F_!\colon \cD(\cA) \to \cD(\cB)$ which restricts to $\mathbb{L}
F_!\colon \cD_{\perf}(\cA) \to \cD_{\perf}(\cB)$ and furthermore to
$\mathbb{L} F_!\colon \cD_{\tri}(\cA) \to \cD_{\tri}(\cB)$.  
\begin{definition}
A spectral functor $F\colon \cA \to \cB$ is called:
\begin{itemize}
\item A {\em triangulated equivalence} if $\mathbb{L}
F_!\colon \cD_{\tri}(\cA) \stackrel{\sim}{\to} \cD_{\tri}(\cB)$ is an
equivalence; 
\item A {\em Morita equivalence} if $\mathbb{L}
F_!\colon \cD_{\perf}(\cA) \stackrel{\sim}{\to} \cD_{\perf}(\cB)$ is
an equivalence. 
\end{itemize} 
\end{definition}

As we shall recall below, with these notions of equivalence spectral
categories provide a point-set model of the homotopy theory of small
stable homotopical categories.  To this end, it is sometimes
convenient to work with a variant model of spectral categories.  Let
$\aM$ denote the symmetric monoidal category of
$S$-modules of Elmendorf-Kriz-Mandell-May~\cite{EKMM} and $\SpcatM$ denote the category of small
categories enriched in $\aM$ and the spectral (enriched) functors.
The notions of DK-equivalence, triangulated equivalence, and Morita
equivalence generalize in the obvious fashion.  Recall
from~\cite[3.7]{MM} that there is a Quillen equivalence $(\bN \circ
\bP, \bU \circ \bN^{\sharp})$ connecting $\cS$ and $\aM$.  The left
adjoint is strong monoidal and the right adjoint is lax
monoidal. 

\begin{proposition}
The induced adjoint pair of functors between $\Spcat$ and
$\SpcatM$ produces a transferred model structure on $\SpcatM$ in
which the weak equivalences are the DK-equivalences.  This model
structure is Quillen equivalent to the model structure on $\Spcat$.
\end{proposition}

\begin{proof}
We can apply standard transfer arguments to lift the DK-equivalence
model structure on $\Spcat$ to a model structure on $\SpcatM$
(e.g., see~\cite[\S 2.5]{BergerMoerdijkaxiom}).  We define weak
equivalences and fibrations in $\SpcatM$ to be maps which are weak
equivalences and fibrations via $\bU \circ \bN^{\sharp}$.  Since the
right adjoint $\bU \circ \bN^{\sharp}$ creates the weak equivalences,
preserves sequential colimits along closed inclusions, all objects in
$\SpcatM$ are fibrant, and there are functorial path objects (via the
construction of~\cite[4.1.1]{tabmatrix}), this specifies a
transferred model structure on $\SpcatM$.  Since $\aM$ and $\cS$
are Quillen equivalent, it is clear that the transferred model
structure on $\SpcatM$ is Quillen equivalent to $\Spcat$.
\end{proof}

The category $\SpcatM$ allows us to correct a small error in
~\cite{BGT}.  Specifically, we studied therein the construction
$\psi_{\perf}$ that takes a small pointwise-cofibrant spectral
category $\aC$ to the cofibrant, fibrant, homotopically compact
objects in the projective model structure on $\Mod(\aC)$~\cite[\S
  4.1]{BGT}.  A functor $F \colon \aC \to \aD$ induces a functor
$\Mod(\aC) \to \Mod(\aD)$ via left Kan extension followed by fibrant
replacement.  Because of the appearance of fibrant replacement,
$\psi_{\perf}$ is not a strict endofunctor on $\Spcat$, although it
does have suitable coherence to give rise to an $\i$-functor.
However, since all objects in $\SpcatM$ and in the variant of
$\Mod(\aC)$ in this setting are fibrant, the fact that the left Kan
extension is a Quillen left adjoint (and so preserves cofibrant
objects and hence cofibrant-fibrant objects) implies that
$\psi_{\perf}$ is functorial in this setting.  An analogous issue
arises with the functoriality of the spectral
envelope~\cite[4.10]{BGT}, and can be corrected in the same fashion.

\section{Background on $\i$-categories and $\i$-operads}

The basic setting for our work is the theory of $\i$-categories (and
particularly stable $\i$-categories), which provide a tractable way to
handle the abstract homotopy theory of the ``category of homotopical
categories'' as well as categories of homotopical functors.  There are
now many competing models of $\i$-categories, including Rezk's Segal
spaces~\cite{Rezk}, the Segal categories~\cite{HirschowitzSimpson,
  Tamsamani} of Simpson and Tamsamani, the ``quasicategories'' (weak
Kan complexes) introduced by Boardman and Vogt and studied by Joyal
and Lurie~\cite{BoardmanVogt, Joyal, HTT}, the homotopy theory of
simplicial categories as studied by Dwyer-Kan and Bergner
~\cite{DwyerKan, Bergner}, and others, all of which are known to be
equivalent (see \cite{BergnerCompare} for a nice discussion of the
situation).  We have chosen to work in this paper with the theories of
quasicategories and spectral categories.
Our basic references for the former material are Lurie's
books \cite{HTT, HA}.  In this section we give a brief review of
certain essential foundational aspects of the theory of
quasicategories and then review the theory of $\i$-operads as we 
will apply it in the body of the paper, following \cite[\S 2]{HA}.

We begin by recalling the passage from categories with weak
equivalences (e.g., model categories) to $\i$-categories in the
setting of quasicategories.  One way to produce an $\i$-category from
a category $\aC$ with weak equivalences $w\aC$ is to take a fibrant
replacement of the Dwyer-Kan simplicial localization of $(\aC,w\aC)$
and apply the coherent nerve functor $\N$.  In general, for a category
$\aC$ with weak equivalences we will denote this process by
$\N(L^H\aC)$.  For the purposes of studying the multiplicative
structure induced by a monoidal product, it is convenient to use the
repackaging of this approach to passing from a model category to an
$\i$-category described in \cite[\S 1.3.4 and \S 4.1.3]{HA}.  The
construction of \cite[Construction~4.1.3.1]{HA} produces from an
$\i$-category $\aC$ and a suitable collection of weak equivalences
$\aW$ an $\i$-category $\aC[\aW^{-1}]$.  In particular, given a model
category $\aD$ which is not necessarily simplicial, the coherent nerve
of the subcategory of cofibrant objects $\aD^{\cc}$ is an
$\i$-category and the $\i$-category $\N(\aD^{\cc})[\aW^{-1}]$ is a
version of the underlying $\i$-category of $\aD$.

\begin{proposition}\label{prop:consist}
Let $\aC$ be a combinatorial model category with weak equivalences
$\aW$.  Then there is a categorical equivalence
$\N(\aC^{\cc})[\aW^{-1}] \to \N(L^H\aC)$ induced by the inclusion
$\aC^{\cc} \to L^H \aC^{\cc}$.
\end{proposition}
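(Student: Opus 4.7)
The plan is to invoke the universal property of the localization $\N(\aC^{\cc})[\aW^{-1}]$ and then compare mapping spaces on both sides with the derived mapping spaces of the combinatorial model category $\aC$.

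First I would observe that the inclusion $\aC^{\cc} \to L^H \aC^{\cc}$ carries weak equivalences to equivalences in $L^H \aC^{\cc}$: this is essentially the defining property of the Dwyer--Kan hammock (or standard) simplicial localization, since a map which becomes invertible in the homotopy category has a homotopy inverse after fibrant replacement of the function complexes. Moreover, the inclusion $\aC^{\cc} \hookrightarrow \aC$ induces a Dwyer--Kan equivalence $L^H \aC^{\cc} \to L^H \aC$ by cofibrant replacement. Composing, we obtain a functor $\N(\aC^{\cc}) \to \N(L^H \aC)$ sending $\aW$ to equivalences, and hence, by the universal property of the $\i$-categorical localization \cite[1.3.4.1]{HA}, a canonical comparison functor
\[
\N(\aC^{\cc})[\aW^{-1}] \too \N(L^H\aC).
\]

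Next I would verify that this functor is a categorical equivalence by checking essential surjectivity and fully faithfulness. Essential surjectivity is immediate, since every object of $\aC$ admits a cofibrant replacement, and both $\i$-categories have the same objects as $\aC$ (up to equivalence). For fully faithfulness, one needs to identify both sides' mapping spaces with the derived mapping spaces $\Map^h_{\aC}(X,Y)$ of $\aC$. On the right, the mapping spaces in $\N(L^H\aC)$ are the Dwyer--Kan function complexes, which, for cofibrant $X$, are naturally weakly equivalent to the derived mapping spaces (using a cosimplicial framing, or Reedy cofibrant resolution in $\aC^{\Delta^{\op}}$). On the left, the identification of mapping spaces in $\N(\aC^{\cc})[\aW^{-1}]$ with the derived mapping spaces is precisely the content of \cite[1.3.4.15, 1.3.4.20]{HA}, which applies here since $\aC$ is combinatorial.

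The main obstacle will be ensuring compatibility between the two identifications of mapping spaces with $\Map^h_{\aC}(X,Y)$, so that the comparison functor induces the appropriate equivalence on mapping spaces. This requires checking that both identifications are induced, up to homotopy, by the same universal cosimplicial/simplicial framing procedure. Once this naturality is in place, the resulting map on mapping spaces is a weak equivalence for every pair of objects, so the comparison functor is fully faithful, and combined with essential surjectivity we conclude that $\N(\aC^{\cc})[\aW^{-1}] \to \N(L^H\aC)$ is a categorical equivalence. Combinatoriality is used exactly to guarantee the existence of the accessible factorizations needed to construct these framings in a homotopically coherent way.
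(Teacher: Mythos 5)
Your proof takes a genuinely different route from the paper's. The paper avoids any direct comparison of mapping spaces: it invokes Dugger's theorem to replace the combinatorial model category $\aC$ by a Quillen-equivalent simplicial model category $\aC'$, observes that this induces an equivalence $\N(\aC^{\cc})[\aW^{-1}]\simeq\N((\aC')^{\cc})[\aW^{-1}]$ via \cite[1.3.3.8]{HA}, uses \cite[1.3.3.7]{HA} to identify the latter with the coherent nerve $\N((\aC')^{\circ})$ of the fibrant-cofibrant objects, and finally cites the Dwyer--Kan zigzag of \cite[4.8]{DKfunc} connecting $\aD^{\circ}$ and $L^H\aD$ for a simplicial model category $\aD$. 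That chain of identifications produces the equivalence without ever having to check that two independently constructed identifications of mapping spaces agree.

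Your approach, by contrast, builds the comparison functor from the universal property of $\N(\aC^{\cc})[\aW^{-1}]$ and then tries to verify fully faithfulness by exhibiting both sides' mapping spaces as derived mapping spaces $\Map^h_\aC(X,Y)$. This is a plausible strategy, and essential surjectivity and the construction of the comparison map are handled correctly. But you explicitly flag and then do not resolve the decisive step: verifying that the induced map on mapping spaces coincides, up to homotopy, with the canonical equivalence between the Dwyer--Kan function complexes in $L^H\aC$ and the mapping spaces computed in the $\i$-categorical localization. That compatibility is exactly the nontrivial content here --- knowing that each side is \emph{abstractly} equivalent to $\Map^h_\aC(X,Y)$ does not by itself show that the given comparison functor is fully faithful, since in principle the functor could induce a map that fails to be an equivalence even between two spaces each equivalent to $\Map^h_\aC(X,Y)$. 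The paper's reduction to the simplicial model category case is precisely designed to sidestep this naturality issue, because there both $\N((\aC')^{\cc})[\aW^{-1}]$ and $L^H\aC'$ compare to a single third model, $\N((\aC')^{\circ})$, through maps that are equivalences for structural reasons. If you want to complete your argument you would need to either (a) establish the compatibility of the two framing-based identifications carefully, or (b) factor through a third model as the paper does.
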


\begin{proof}
This follows from the fact that the map $\N(\aC) \to \N(L^H \aC)$
exhibits $\N(L^H \aC)$ as the $\i$-category obtained from $\N(\aC)$ by
inverting the morphisms of $\aW$, in the sense of \cite[Definition~1.3.4.1]{HA}.
To see this, recall from \cite{dugger} that the combinatorial model category $\aC$ is
Quillen equivalent to a combinatorial simplicial model category
$\aC'$.  By \cite[Lemma~1.3.4.21]{HA}, this induces an
equivalence of $\i$-categories
\[
\N(\aC^{\cc})[\aW^{-1}] \to \N((\aC')^{\cc})[\aW^{-1}].
\] 
Next, \cite[Theorem~1.3.4.20]{HA} implies that there is an equivalence of
$\i$-categories 
\[
\N((\aC')^{\cc})[\aW^{-1}] \to \N((\aC')^{\circ}),
\]
where $(\aC')^{\circ}$ denotes the full simplicial subcategory consisting
of the cofibrant-fibrant objects in $\aC'$.
Thus, it suffices to compare $\N(L^H \aD)$ and $\N(\aD^{\circ})$ for a
simplicial model category $\aD$.  By \cite[4.8]{DKfunc} there is a
natural zig-zag of equivalences of simplicial categories connecting
$\aD^{\circ}$ and $L^H \aD$, and so the result follows. 
\end{proof}

Next, we briefly review the definitions of stable
$\i$-categories.  An $\i$-category that has finite colimits and limits
is stable~\cite[1.1.1.9]{HA} when pushout and pullback squares
coincide~\cite[1.1.3.4]{HA}; it is straightforward to check that a
stable $\i$-category has a triangulated structure on its homotopy
category coming from the cofiber sequences~\cite[1.1.2.13]{HA}.  A
functor between stable $\i$-categories is exact when it preserves
finite colimits~\cite[\S 1.1.4]{HA}.  An exact functor is an
equivalence when it induces an equivalence on the underlying
(triangulated) homotopy categories).  One of the basic attractive
aspects of the theory of quasicategories is that the quasicategory of
quasicategories is tractable.  Herein, we are particularly interested
in $\istabcat$ and $\idemstabcat$, which are respectively the
$\i$-categories of small stable and idempotent-complete small stable
$\i$-categories (with the equivalences given as above).

We now turn to the theory of $\i$-operads.
Let $\Fin_*$ denote the category with objects the pointed sets
$\langle n\rangle=\{*,1,2,\ldots,n\}$ and morphisms those functions
which preserve the basepoints $*$.  (Classically, this category is
also denoted $\Gamma^{\op}$.)  Recall that $\N$ will denote the nerve
functor; the nerve of an ordinary category is a
quasicategory~\cite[1.1.5.5]{HTT}.  In mild abuse of notation, we will
also use $\N$ to denote the homotopy coherent nerve functor from
simplicial categories to quasicategories~\cite[1.1.2.6]{HTT}.  This is
reasonable since the homotopy coherent nerve of an ordinary category
regarded as a discrete category coincides with the standard
nerve~\cite[1.1.5.8]{HTT}.

Before providing the definition of an $\i$-operad, we recall two basic
definitions.  First, a map $f \colon \langle m \rangle \to \langle
n \rangle$ is inert if $f^{-1}(i)$ has precisely one element for
$i \neq *$~\cite[2.1.1.8]{HTT}.  There are distinguished inert
morphisms $\rho^i\colon \langle n\rangle\to\langle 1\rangle$ that take
everything to the basepoint except $i$, which it taken to $1$.
Second, we need to recall the definition of $p$-coCartesian morphisms.
Given an object $x \in \aC$, let $\aC_{x/}$ denote the category of
objects under $x$.  Similarly, given a morphism $f$ in an
$\i$-category $\aC$, let $\aC_{f/}$ denote the category of morphisms
under $f$.  Then given a functor $p \colon \aC \to \aD$, a morphism
$f \colon x \to y$ in $\aC$ is $p$-coCartesian (lifting $p(f) \colon
px \to py$) if the natural map
\[
\aC_{f/} \to \aC_{x/} \times_{\aD_{px /}} \aD_{p(f) /}
\]
is a trivial fibration of simplicial sets~\cite[\S
2.4.1]{HTT}.  A map $p \colon \aC \to \aD$ is a coCartesian fibration
if it is an inner fibration (has the right lifting property with
respect to inner horn inclusions) and for each $c \in \aC$ and map
$pc \to d$, there is a $p$-coCartesian edge $c \to c'$ that lifts the
given map; see~\cite[\S 2.4]{HTT} for more details.

\begin{definition}{(see \cite[Definition~2.1.1.10]{HA})}\label{defn:iop}
An $\i$-operad is then an $\i$-category $\aO^{\otimes}$ and a functor
$p \colon \aO^{\otimes} \to \N(\Fin_*)$ satisfying the following
conditions: 
\begin{enumerate}
\item[(1)] For every inert morphism $f\colon \langle
m\rangle\to\langle n\rangle$ in $\Fin_*$ and every object $C$ of
$\O^\otimes_{\langle m\rangle}$, there exists a $p$-coCartesian
morphism $\tilde{f} \colon C\to C'$ in $\O^\otimes$ lifting $f$
and hence an induced functor $f_! \colon \O^\otimes_{\langle
m\rangle}\to\O^\otimes_{\langle n\rangle}$. 
\item[(2)] Let $C\in\O^\otimes_{\langle m\rangle}$ and
$C'\in\O^\otimes_{\langle n\rangle}$ be objects, let $f \colon \langle
m\rangle\to\langle n\rangle$ be a morphism in $\Fin_*$, and let
$\map^f_{\O^\otimes}(C,C')$ denote the union of the components of
$\map_{\O^\otimes}(C,C')$ which lie over $f\in\hom_{\Fin_*}(\langle
m\rangle, \langle n\rangle)$. Choose $p$-coCartesian morphisms $C'\to
C'_i$ lying over the inert morphisms $\rho^i\colon \langle
n\rangle\to\langle 1\rangle$ for each $1\leq i\leq n$. Then the
induced map \[\map^f_{\O^\otimes}(C,C')\to\prod_{1\leq i\leq
n}\map^{\rho^i\circ f}_{\O^\otimes}(C,C'_i)\]
is a homotopy equivalence.
\item[(3)] For every finite collection of objects $C_1,\ldots C_n$ of
$\O^\otimes_{\langle 1\rangle}$, there exists an object $C$ of
$\O^\otimes_{\langle n\rangle}$ and $p$-coCartesian morphisms $C\to
C_i$ covering $\rho^i$, $1\leq i\leq n$. 
\end{enumerate}
\end{definition}

This is the generalization of the notion of a multicategory (colored
operad); to obtain the generalization of an operad we restrict to
$\i$-operads equipped with an essentially surjective functor
$\Delta^{0} \to p^{-1}(\langle 1 \rangle)$.  To make sense of this, note that
$p^{-1}(\langle 1 \rangle)$ should be thought of as the ``underlying''
$\i$-category associated to $\aO^{\otimes}$, which should contain only a
single (equivalence class of) object if we're interested in studying the
$\i$-version of an ordinary operad.

More precisely, given a multicategory $\aA$, we can construct a
category $\widetilde{\aA}$ as follows: the objects are finite sets of
objects in $\aA$, and morphisms from $\{X_1, \ldots,
X_m\} \to \{Y_1, \ldots, Y_n\}$ are specified by maps $\langle
m \rangle \to \langle n \rangle$ in $\Fin_*$ and a collection of
morphisms $\{\phi_j \in \aA(\{X_i\}_{i \in \alpha^{-1}(j)},
Y_j)\}_{1 \leq j \leq n}$.  Composition is determined by the
composition laws in the multicategory.  There is a natural functor
$\widetilde{\aA} \to \Fin_*$; Definition~\ref{defn:iop} is modeled on
this structure.  It is straightforward to check that this
construction, the ``category of operations'', is functorial.

More generally, given a simplicial multicategory, there is an analogue
of this construction which yields a simplicial
category~\cite[2.1.1.22]{HA}, and applying the homotopy coherent nerve
to the resulting category yields an $\i$-operad provided that each
morphism simplicial set of the multicategory is a Kan complex;
consult~\cite[Proposition~2.1.1.27]{HA} for further discussion or
see~\cite[\S2]{heuts}.  This construction is sometimes referred to as
the {\em operadic nerve} of the simplicial multicategory.

We now turn to some examples of interest.  The identity map
$\N(\Fin_*) \to \N(\Fin_*)$ is an $\i$-operad; this is the analogue of
the $E_\infty$ operad.  More generally, for each $0\leq n\leq\infty$,
we can define a topological category $\tilde{\bE}[n]$ as follows (see
also~\cite[Definition~5.1.0.2]{HA}).  The objects of $\tilde{\bE}[n]$
are the objects of $\Fin_*$.  Morphisms consist of maps
$\alpha \colon \langle n \rangle \to \langle m \rangle$ along with for
each non-basepoint $j \in \langle m \rangle$ disjoint rectilinear embeddings
$(0,1)^n \to (0,1)^n$ (i.e., maps given by component-wise linear maps)
for each element of $\alpha^{-1}(j)$.  Composition is defined in the
usual fashion.  Using the singular complex functor to get a simplicial
category and applying the homotopy coherent nerve, there is a natural
functor $\N(\tilde{\bE}[n]) \to \N(\Fin_*)$ which is an
$\i$-operad~\cite[5.1.0.3]{HA}.  We refer to the resulting $\i$-operad as
the $E_n$ operad.

Given an $\i$-operad $q\colon \O^\otimes\to\N(\Fin_*)$ and a
coCartesian fibration $p\colon \C^\otimes\to\O^\otimes$ from an
$\i$-category $\C^\otimes$, we say that {\em
$p\colon \C^\otimes\to\O^\otimes$ is an $\O$-monoidal $\i$-category}
if the composite $q\circ p\colon \C^\otimes\to\O^\otimes\to\N(\Fin_*)$
is an $\i$-operad. Such a map $p$ is called a {\em
coCartesian fibration of $\i$-operads}.  In particular, a
{\em symmetric monoidal $\i$-category} is an $\infty$-operad
$\aC^\otimes$ such that the structure map is a coCartesian fibration
of $\i$-operads (see \cite[Example~2.1.2.18]{HA})
\[
p \colon \aC^{\otimes} \to \N(\Fin_*).
\]
The underlying $\i$-category is obtained as the fiber $\aC =
p^{-1}(\langle 1 \rangle)$.  More generally, the fiber over $\langle
n \rangle$ is equivalence to the $n$-fold product of $\aC$.  In abuse
of terminology, we will say that an $\i$-category $\aC$ is a symmetric
monoidal $\i$-category if it is equivalent to $p^{-1}(\langle
1 \rangle)$ for some symmetric monoidal $\i$-category $\aC^{\otimes}$.

Recall from \cite[Example~2.1.1.5]{HA} that given a symmetric monoidal
category $\aC$, there is an associated multicategory in which the
multihomomorphisms are given by the formula
\[
\hom((X_1,\ldots,X_n),Y) =\hom(X_1\otimes\cdots\otimes X_n,Y).
\]
Associated to this multicategory we can construct a category
$\aC^{\otimes}\to\Fin_*$ over
$\Fin_*$ (see \cite[Construction~2.1.1.7]{HA}) such that the
coherent nerve $\N(\aC^{\otimes})\to\N(\Fin_*)$ exhibits
$\N(\aC^{\otimes})$ as a symmetric monoidal $\i$-category;
see \cite[Examples 2.1.1.21 and 2.1.2.22]{HA}.

When $\aC$ is a symmetric monoidal model category, we can also obtain
a symmetric monoidal $\i$-category using the coherent nerve.
Specifically, there is a symmetric monoidal $\i$-category
$(\N(\aC^{\cc})[\aW^{-1}])^{\otimes}$ with underlying $\i$-category
$\N(\aC^{\cc})[\aW^{-1}]$; see \cite[Proposition~4.1.3.4 and Example
4.1.3.6]{HA}.  If $\aC$ is simplicial, then there is an equivalence of
symmetric monoidal $\i$-categories between
$(\N(\aC^{\cc})[\aW^{-1}])^{\otimes}$ and the operadic nerve
$\N^\otimes(\aC^\circ)$ (cf. \cite[Definition 2.1.1.23]{HA}) of
$\aC^{\circ}$, the full subcategory of cofibrant-fibrant objects of
$\aC$ by \cite[Corollary 4.1.3.16]{HA}.

An {\em $\infty$-operad map}
\[
f\colon \O^\otimes\to\O'^\otimes
\]
is a map of simplicial sets $f\colon \O^\otimes\to\O'^\otimes$ over
$\N(\Fin_*)$ such that $f$ takes inert morphisms in $\O^\otimes$ to
inert morphisms in $\O'^\otimes$. The $\i$-category of $\i$-operad
maps is written $\Alg_{\O}(\O')$ and is defined to be the full
subcategory of $\Fun_{\N(\Fin_*)}(\O^\otimes,\O'^\otimes)$ spanned by
the $\infty$-operad maps.  (Here $\Fun_{\N(\Fin_*)}$ denotes maps over
$\N(\Fin_*)$.)

More generally, we can work over a fixed $\infty$-operad $\O^\otimes$.
If $p\colon \C^\otimes\to\O^\otimes$ is an $\infty$-operad map such
that $p$ is also a categorical fibration (a {\em fibration of
$\i$-operads}) and $\alpha\colon \O'^\otimes\to\O^\otimes$ is an arbitrary
$\i$-operad map, then $\Alg_{\O'/\O}(\C)$ is the full subcategory of
$\Fun_{\O^\otimes}(\O'^\otimes,\C^\otimes)$ spanned by the
$\infty$-operad maps; see \cite[Definitions 2.1.2.7 and
2.1.3.1]{HA}.
An object of $\Alg_{\O'/\O}(\C)$ is referred to as an $\O'$-algebra
object of $\C$ over $\O$.  Note that the $\infty$-category
$\Alg_{\O'/\O}(\C)$ is the fiber of the projection
$\Alg_{\O'}(\C)\to\Alg_{\O'}(\O)$ over $\alpha$.  Also, when
$\alpha\colon \O'^\otimes\to\O^\otimes$ is the identity, we write
$\Alg_{/\O}(\C)$ in place of $\Alg_{\O'/\O}(\C)$.

For example, the $\i$-category of commutative algebras in a symmetric
monoidal $\i$-category $\aC^{\otimes}$ is given by suitable sections
of the coCartesian fibration $\aC^{\otimes} \to \N(\Fin_*)$.  The data
of such a section at $\langle n \rangle$ is $n$ copies of the the
underlying object (the section evaluated at $\langle 1 \rangle$), and the
lifting condition for $p$-coCartesian edges provides the
multiplications.  See~\cite[\S 4.2]{groth} for a nice discussion in
detail.

\begin{definition}
Let $\O^\otimes$ be an $\i$-operad with a single object and let $\aC$
be an $\aO$-monoidal $\i$-category.  There is a natural map from  
$\Alg_{/ \aO}(\aC) \to \aC$ induced by the image of the object in
$\aO$.  Then the {\em space of $\aO$-algebra structures} on an object
$X$, denoted $\Alg_{/ \aO}(X)$, is the largest Kan complex contained
in the full subcategory of $\Alg_{/ \aO}(\aC)$ spanned by objects
which project to $X$ under this map.
\end{definition}

Given an $\infty$-operad $\O^\otimes$ and $\O$-monoidal
$\i$-categories $p\colon \aC^{\otimes}\to\O^\otimes$ and
$q\colon \aD^{\otimes}\to\O^\otimes$, we have two associated
categories of functors between them:  

\begin{enumerate} 
\item The $\i$-category of $\i$-operad maps $\Alg_{\aC/\O}(\aD)$,
which should be thought of as the analogue of lax $\O$-monoidal
functors; consult \cite[Definition~2.1.2.7]{HA} for details. 

\item The full subcategory $\Fun^{\otimes}_{\O}(\aC,\aD)$ of
$\Alg_{\C}(\aD)$ consisting of the $\i$-operad maps over $\O^\otimes$
which carry $p$-coCartesian morphisms to $q$-coCartesian morphisms,
which should be regarded as the analogue of symmetric monoidal
functors; consult \cite[Definition~2.1.3.7]{HA} for details. 
\end{enumerate}

We now explain how to produce symmetric monoidal functors of
$\i$-categories from point-set data.  We rely on the foundational work
of \cite{ElmendorfMandell2}.

\begin{lemma}\label{lem:symfunc}
Let $\aC$ and $\aD$ be symmetric monoidal categories and
$f \colon \aC \to \aD$ a lax symmetric monoidal functor.  Then there
is an induced $\i$-operad map
\[
\N(f) \colon \N(\aC)^{\otimes} \to \N(\aD)^{\otimes},
\]
making $\N(-)^{\otimes}$ into a functor from the (nerve of the)
category of symmetric monoidal categories and lax symmetric monoidal
functors to the $\i$-category of $\infty$-operads. 
\end{lemma}

\begin{proof}
Associated to any symmetric monoidal category is an underlying
multicategory, and the work of \cite[\S 3]{ElmendorfMandell2},
particularly the proof of theorem 1.1, implies that a lax map
$f \colon \aC \to \aD$ induces a map of multicategories.  It is clear
from \cite[Remark~2.1.1.7]{HA} that a map of multicategories
induces a functor $f^{\otimes} \colon \aC^{\otimes} \to \aD^{\otimes}$
over the natural projections to $\Fin_*$.  Since $f^{\otimes}$ came
from a map of multicategories, on passage to the coherent nerve
$f^{\otimes}$ takes inert morphisms to inert morphisms and so induces
a map of $\i$-operads.  The behavior of composites follows immediately
from the functoriality of the passage from lax maps to multicategory
maps.
\end{proof}

Next, we integrate this with the localization $\N(\aC)[W^{-1}]$.  

\begin{proposition}\label{cor:indufunc}
Let $\aC$ and $\aD$ be symmetric monoidal categories equipped with
subcategories $W_{\aC} \subset \aC$ and $W_{\aD} \subset \aD$ of weak
equivalences such that both collections of weak equivalences are
closed under tensor with any fixed object.  Let $f \colon \aC \to \aD$
be a lax symmetric monoidal functor such that $f$ preserves weak
equivalences and $f$ is weakly symmetric monoidal in the sense that
the maps $1_{\aD} \to f(1_{\aC})$ and $f(c_1) \otimes f(c_2) \to
f(c_1 \otimes c_2)$, for all pairs of objects $c_1$ and $c_2$ of $\aC$, are in $W_{\aD}$.  Then
 
\begin{enumerate}
\item there are symmetric monoidal $\i$-categories
$\N(\aC)[W_{\aC}^{-1}]^{\otimes}$ and $\N(\aD)[W_{\aD}^{-1}]^{\otimes}$ with
underlying $\i$-categories $\N(\aC)[W_{\aC}^{-1}]$ and
$\N(\aD)[W_{\aD}^{-1}]$ 
respectively, and 
\item there is a map of $\i$-operads 
\[
\tilde{f} \colon \N(\aC)[W_{\aC}^{-1}]^{\otimes} \to \N(\aD)[W_{\aD}^{-1}]^{\otimes} 
\]
such that on underlying $\i$-categories $\tilde{f}$ restricts to the
functor induced by $f$ via lemma~\ref{lem:symfunc}.
\end{enumerate}

\end{proposition}

\begin{proof}
By Lemma~\ref{lem:symfunc}, we have a map
\[
\xymatrix{
\N(\aC)^{\otimes} \ar[r]^{\N(f)} & \N(\aD)^{\otimes},
}
\]
and since by hypothesis the localization $N(\aD^{\otimes}) \to
N(\aD^{\otimes})[W_{\aD}^{-1}]$ is symmetric monoidal (it satisfies
the criterion of~\cite[Proposition 4.1.3.4]{HA}), we have a lax symmetric monoidal
functor 
\[
\xymatrix{
\N(\aC)^{\otimes} \ar[r]^-{\N(f)} & \N(\aD)[W_{\aD}^{-1}]^{\otimes}.
}
\]
Furthermore, $\N(f)$ lies in the subcategory
\[
\Fun^\otimes(\N(\aC),\N(\aD)[W_{\aD}^{-1}])\subseteq\Alg_{\N(\aC)}(\N(\aD)[W_{\aD}^{-1}]) 
\]
of symmetric monoidal functors since each of the comparison maps (for each active map $\mu\colon \langle n\rangle\to\langle m\rangle$ in $\Gamma$)
\[
\mu_!(F(\aA_1,\cdots,\aA_n))\to F(\mu_!(\aA_1,\cdots,A_n)))
\]
is a weak equivalence in $\aD$, and hence becomes an equivalence in
$\N(\aD)[W_{\aD}^{-1}]$~\cite[Definition~1.3.4.1]{HA}.
Finally, since $f$ takes elements of $W_{\aC}$ to elements of
$W_{\aD}$, the result follows from \cite[Proposition~4.1.3.4]{HA}, which 
we summarize in the following commutative diagram:
\[
\xymatrix{
\N(\aC)^{\otimes} \ar[r]^{\N(f)} \ar[d] & \N(\aD)^{\otimes} \ar[d] \\
\N(\aC)[W_{\aC}^{-1}]^{\otimes} \ar[r]^{\N(\tilde{f})}
& \N(\aD)[W_{\aD}^{-1}]^{\otimes}. \\ 
}
\]
\end{proof}

We also use a comparison between point-set $E_\infty$ algebras in a
symmetric monoidal combinatorial simplicial model category with
$E_\infty$ algebras in the underlying $\i$-category.  We
thank Jacob Lurie for suggesting this argument.  (See
also~\cite[\S6.2]{heuts} for progress towards this kind of result in
the context of arbitrary multicategories rather than just operads.)

\begin{proposition}\label{cor:opeinf}
Let $\aO$ be a cofibrant simplicial $E_\infty$-operad and
$\aC$ a symmetric monoidal simplicial model category.
Then there is an equivalence of $\i$-categories 
\[
\N(\Alg_{\aO}(\aC))[W^{-1}] \htp \CAlg(\N(\aC)[W^{-1}]).
\]
\end{proposition}

\begin{proof}
First, there is an equivalence of symmetric monoidal $\i$-categories
between $\N(\aC)[W^{-1}]^{\otimes}$ and $\N(\aC^{\circ})^{\otimes}$,
where $\aC^{\circ}$ denotes the cofibrant-fibrant objects in
$\aC$~\cite[Variant 4.1.3.17]{HA}.  By the functoriality of the
operadic homotopy coherent nerve, we have a canonical map
\[
\N(\Alg_{\aO}(\aC^{\circ})) \to \Alg_{\N(\aO)}(\N(\aC^{\circ})) \htp \Alg_{\N(\aO)}(\N(\aC)[W^{-1}]).
\]
Since this map clearly preserves weak equivalences, it factors as a
map 
\[
\gamma \colon \N(\Alg_{\aO}(\aC^{\circ}))[W^{-1}] \to \Alg_{\N(\aO)}(\N(\aC)[W^{-1}]).
\]
Since $\aO$ is cofibrant, the results of~\cite[\S 4]{Spitzweck} produce a
$J$-semi model category structure on the category $\Alg_{\aO}(\aC)$.
The proof of~\cite[Theorem 1.3.4.20]{HA} goes through in this context
to show that the map
\[
\N(\Alg_{\aO}(\aC^{\circ})[W^{-1}] \to \N(\Alg_{\aO}(\aC))[W^{-1}]
\]
is an equivalence of $\i$-categories.

Finally, we follow the strategy of the proof of~\cite[Theorem
4.4.4.7]{HA} to prove that $\gamma$ is an equivalence of
$\i$-categories.  We have the commutative diagram
\[
\xymatrix{
\N(\Alg_{\aO}(\aC^{\circ}))[W^{-1}] \ar[rr]^{\gamma} \ar[dr]^{G}
&& \Alg_{\N(\aO)}(\N(\aC)[W^{-1}]) \ar[dl]^{G} \\ &
\N(\aC)[W^{-1}] &
}
\]
and we will apply the $\i$-categorical Barr-Beck theorem
via~\cite[Corollary 6.2.2.14]{HA}.  The verification of the required
hypotheses proceeds exactly as in~\cite[4.4.4.7]{HA} except for two
conditions.  First, we need to check that
$G \colon \N(\Alg_{\aO}(\aC^{\circ})[W^{-1}] \to \N(\aC)[W^{-1}]$
preserves geometric realizations of simplicial objects.  Second, we
need to show that the free $\aO$-algebra functor $\aC^{\circ}\to\Alg_{\aO}(\aC^{\circ})$ on a cofibrant-fibrant object $X$ is
computed by the colimit $\coprod_n (\aO(n) \times X^n)_{h\Sigma_n}$.
Both of these can be verified using the $J$-semi model structure on
$\aO$-algebras in $\aC$ and the fact that $\aO$ is a cofibrant
$E_\infty$ operad.  For the first, the arguments for~\cite[\S
VII.3]{EKMM} apply since the free $\aO$-algebra functor commutes with
geometric realization on $\aC$.  For the second, this is an immediate
consequence of the fact that $\aO$ is cofibrant $E_\infty$ and so the
derived functor of the free $\aO$-algebra functor is a homotopy
colimit of the desired form.
\end{proof}

We conclude this section with some remarks about the behavior of the
unit object in monoidal $\i$-categories.  Let $\mathbf{1}$ denote the
unit object in an $E_n$-monoidal $\i$-category.  Our main results rely
on the following analogues of the standard fact that the unit is
initial in a monoidal category; these $\i$-categorical versions follow
from~\cite[Proposition 3.2.1.8]{HA} (e.g., see~\cite[Corollary
3.2.1.9]{HA}).

\begin{lemma}\label{lem:unit}
Let $\aC^\otimes$ be an $E_n$-monoidal $\i$-category, $n\geq 0$.
Then the $\i$-category $\Alg_{/E_n}(\C)$ has an initial object, and an
object $A$ of $\Alg_{/E_n}(\C)$ is initial if and only if the unit map
$\mathbf{1}\to A$ is an equivalence in $\C$. 
\end{lemma}

\begin{corollary}\label{cor:triv}
Let $\aC^\otimes$ be an $E_n$-monoidal $\i$-category, $n\geq 0$.  Then, the
$\i$-category $\Alg_{/E_n}(\mathbf{1})$ of $E_n$ algebra structures on
the unit object $\mathbf{1}$ of $\C$ is contractible and, for any
other $E_n$ algebra $A$ of $\C$, the space of $E_n$ algebra maps from
$\mathbf{1}$ to $A$ is contractible.
\end{corollary}

\section{Multiplicative Morita theory}\label{sec:multMor}

The $\i$-category $\idemstabcat$ of idempotent-complete small stable
$\i$-categories has a symmetric monoidal structure with product
$\otimes^{\vee}$ and the unit the $\i$-category $\ispec^{\omega}$ of
compact objects in $\ispec$~\cite[\S 6.3.1]{HA}.
Recall from~\cite[Thms.~4.22 and 4.23]{BGT} that we have a description
of $\idemstabcat$ as the accessible localization of the $\i$-category
$\N(\Spcat)[W^{-1}]$ along the Morita equivalences.  The goal of this
section is to promote this equivalence to an equivalence of symmetric
monoidal $\i$-categories, using the smash product of spectral
categories; see Theorem \ref{thm:symcomp}.

The category $\Spcat$ has a closed symmetric monoidal product given by
taking $(\aC, \aD)$ to the spectral category with objects $\ob \aC
\times \ob \aD$ and morphism spectra $\aC(c,c') \sma \aD(d,d')$.
However, the smash product of cofibrant spectral categories is not
necessarily cofibrant, and consequently the model structure is not monoidal \cite{Spectral} (and see
\cite{Toen} for a discussion of this in the setting of
DG-categories).  This issue is one of the persistent technical
difficulties in working with these models of $\idemstabcat$. 

To resolve this problem, we employ the notion of flat objects and
functors (e.g., see \cite[B.4]{hhr}).  Recall that a functor between
model categories is flat if it preserves weak equivalences and
colimits.  An object $X$ of a model category (whose underlying
category is monoidal with respect to a tensor product $\otimes$) is
then said to be flat if the functor $X \otimes (-)$ is a flat functor.
Cofibrant objects in a monoidal model category are flat; in
particular, cofibrant spectra are flat.  The utility of this
definition comes from the fact that the smash product of flat spectra
computes the derived smash product.

We define a spectral category $\aC$ to be pointwise-cofibrant if each
morphism spectrum $\aC(x,y)$ is a cofibrant spectrum.  The following
proposition summarizes the facts about pointwise-cofibrant spectral
categories that we will need.  Recall that a spectral category $\aC$
has an associated spectral category of perfect modules (equivalently,
homotopically compact modules, or retracts of finite cell modules),
and that a map of spectral categories $f\colon \aC\to\aD$ is a {\em Morita
equivalence} if $f$ induces a DK-equivalence between spectral
categories of perfect modules; see \cite[\S 2]{BGT} for details.

\begin{proposition}\label{prop:flatprops}
\hspace{5 pt}
\begin{enumerate}
\item Every spectral category is functorially Morita equivalent to a
  pointwise-cofibrant spectral category with the same objects.
\item The subcategory of pointwise-cofibrant spectral categories is closed
  under the smash product.
\item A pointwise-cofibrant spectral category is flat with respect to the
  smash product of spectral categories.
\item If $\aC$ and $\aD$ are pointwise-cofibrant spectral categories, the
  smash product $\aC \sma \aD$ computes the derived smash product $\aC
  \sma^{\L} \aD$. 
\end{enumerate} 
\end{proposition}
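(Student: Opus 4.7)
The plan is to reduce each of the four claims to the corresponding statement about cofibrant spectra, exploiting that the underlying category of spectra carries a monoidal model structure (e.g., the flat stable model structure on symmetric spectra) in which cofibrant objects are closed under smash product and admit a lax symmetric monoidal functorial cofibrant replacement $Q$. Throughout, I regard $Q$ as equipped with a natural weak equivalence $\varepsilon\colon Q\to \mathrm{id}$ and a unit $\eta\colon \bbS\to Q(\bbS)$ compatible with the lax structure maps.

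For (1), I would define $Q(\aC)$ to have the same object set as $\aC$ and morphism spectra $Q(\aC(x,y))$; composition is obtained by postcomposing the lax monoidal constraint
\[
Q(\aC(y,z))\sma Q(\aC(x,y)) \to Q(\aC(y,z)\sma \aC(x,y))
\]
with $Q$ applied to composition in $\aC$, while identities come from $\eta$. The componentwise weak equivalence $\varepsilon$ yields a Dwyer--Kan equivalence $Q(\aC)\to \aC$, hence a Morita equivalence, and the construction is functorial in $\aC$. Claim (2) is then immediate from the fact that the smash product of cofibrant spectra is cofibrant in a monoidal model category of spectra, since the mapping spectra of $\aC\sma \aD$ are by definition $\aC(c,c')\sma \aD(d,d')$.

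For (3), I would show that if $\aC$ is pointwise-cofibrant and $\aD\to \aD'$ is a Morita equivalence, then $\aC\sma \aD \to \aC\sma \aD'$ is a Morita equivalence. The quickest route is to first establish the analogous statement for levelwise stable equivalences, which reduces pointwise to the flatness of cofibrant spectra: the map $\aC(c,c')\sma \aD(d,d')\to \aC(c,c')\sma \aD'(d,d')$ is a stable equivalence because smashing with the cofibrant spectrum $\aC(c,c')$ preserves stable equivalences. To pass from levelwise to Morita equivalences, I would appeal to the characterization of Morita equivalences via induced equivalences on (compact) module categories and check that smashing with the pointwise-cofibrant $\aC$ preserves this property, using (1) to replace any auxiliary spectral categories by pointwise-cofibrant ones when needed. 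Claim (4) is then a formal consequence: for pointwise-cofibrant $\aC$ and $\aD$, a cofibrant replacement $\aC'\to \aC$ in the Morita model structure on $\Spcat$ is a Morita equivalence, and by (3) the induced map $\aC'\sma \aD\to \aC\sma \aD$ is a Morita equivalence, so $\aC\sma \aD$ already computes $\aC\sma^{\L}\aD$.

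The main obstacle is expected to be (3), since the Morita model structure on $\Spcat$ is a nontrivial Bousfield-type localization of the levelwise model structure and its generating trivial cofibrations are more subtle than the underlying levelwise ones; one must argue that smashing with a pointwise-cofibrant category preserves not only levelwise stable equivalences but also the additional Morita weak equivalences, which is most cleanly done by passing to the associated derived categories of modules.
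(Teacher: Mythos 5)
Your outline is structured correctly, and (2) and (4) match the paper exactly: (2) is immediate from closure of cofibrant spectra under smash, and (4) is a formal consequence of (3). The two places where you diverge, or leave gaps, are (1) and (3).

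For (1), you propose to apply a lax symmetric monoidal cofibrant replacement functor $Q$ on spectra to each mapping spectrum and reassemble a spectral category. This is a genuinely different route from the paper's, and it quietly rests on a nontrivial assertion: that the chosen model category of spectra admits a \emph{lax symmetric monoidal} functorial cofibrant replacement. The functorial replacement produced by the small object argument is not lax monoidal, and you give no reference or construction. The paper's argument sidesteps this entirely: it takes the functorial cofibrant replacement $Q$ in the model structure on $\Spcat$ itself; since the generating cofibrations do not change object sets, $Q(\aC)\to\aC$ is the identity on objects, and cofibrant spectral categories are pointwise-cofibrant by a cited result. That argument needs nothing beyond the model structure already in play.

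For (3), which you correctly identify as the crux, your reduction in the ``levelwise'' case is fine, but the passage from there to Morita equivalences is only gestured at (``appeal to the characterization via induced equivalences on module categories''). This is exactly where the substance lies, and the paper actually carries it out: for a Morita equivalence $f\colon\aA\to\aB$, it shows that $(f\sma\id)_!\colon\Mod(\aA\sma\aC)\to\Mod(\aB\sma\aC)$ is a DK-equivalence by checking (a) homotopical full faithfulness, reducing via retracts of cellular modules and the left-Quillen property to the representables, i.e.\ to the pointwise computation you identified, and (b) homotopical essential surjectivity, by exhibiting each representable $\cB\sma\cC$-module $\widehat{(b,c)}$ as equivalent to the image of $M\sma\widehat{c}$ for a perfect $\cA$-module $M$ with $f_!M\simeq\widehat{b}$. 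Your sketch does not address (b) at all, and (b) is the step that distinguishes Morita equivalences from DK-equivalences. Also note a minor wrinkle in your pointwise reduction: a DK-equivalence need not be bijective on objects, so ``levelwise'' does not literally mean ``for each fixed pair of objects''; the paper's handling via representable modules absorbs this. You would need to supply both the monoidal replacement functor in (1) and the essential-surjectivity argument in (3) to make the proposal complete.
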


\begin{proof}
By construction of the generating cofibrations
(see \cite[Def.~4.4]{Spectral}), there exists a cofibrant resolution
functor $Q(-)$ on $\Spcat$ such that for any spectral category $\cC$
the spectral functor $Q(\cC) \to \cC$ induces the identity map on the
set of objects. Item (1) follows then
from \cite[Prop.~4.18]{Spectral}, which shows that every cofibrant
spectral category is pointwise-cofibrant. Item (2) follows from the
fact that the smash product of cofibrant spectra remains
cofibrant. Item (4) follows from item (3).  

Let $\cC$ be a pointwise-cofibrant spectral category. The functor
$-\wedge\cC \colon \Spcat \to \Spcat$ clearly preserves colimits as the
symmetric monoidal structure is closed. Hence, in order to prove item
(3), it remains to show that if $f:\cA \to \cB$ is a Morita equivalence
(see \cite[Def.~6.1]{IMRN}), then
$f\wedge \id: \cA \wedge \cC \to \cB \wedge \cC$ is also a Morita
equivalence.

If $f\colon\cA\to\cB$ is a Morita equivalence then the induced map
$f_!\colon\Mod(\cA)\to\Mod(\cB)$ is a DK-equivalence of spectral
categories.  Let $\aC$ be a pointwise-flat spectral category.  We must
show that
\[
(f\sma\id)_! \colon \Mod(\cA\sma\cC)\to\Mod(\cB\sma\cC)
\]
is also a DK-equivalence, which is to say that it is homotopically fully faithful and essentially surjective.
We begin with the former.
Since any cofibrant $\aA\sma\aC$-module is a retract of a cellular
$\aA\sma\aC$-module and $(f\sma\id)_!$ is a left Quillen functor which
preserves representable modules, it suffices to check that
$f \sma \id$ itself is homotopically fully faithful. 
But this follows because $\cC$ is pointwise-cofibrant, so
\begin{equation*}
(f\sma\id)((a',c'),(a,c)) \colon (\cA\wedge \cC)((a',c'),(a,c)) \too (\cB \wedge \cC)((fa',c'),(fa,c))\,
\end{equation*}
is a weak equivalence of spectra for any pair of objects $(a,c)$ and $(a',c')$ of $\cA\sma\cC$.
To verify essential surjectivity, it suffices to check that the $\cB\sma\cC$-module $\widehat{(b,c)}$ represented by the object $(b,c)\in\cB\sma\cC$ is equivalent to the image of an $\cA\sma\cC$-module.
Since $f$ is a Morita equivalence, there exists a perfect $\cA$-module $M$ and a weak equivalence of $\cB$-modules $f_!M\simeq\widehat{b}$.
Thus
\[
(f\sma\id)_!(M\sma\widehat{c})(b',c')\simeq M(b')\sma\cC(c',c)\simeq\cB(b',b)\sma\cC(c',c)\simeq (f\sma\id)_!(\widehat{(b,c)})(b',c'),
\]
so that $(f\sma\id)_!$ is also homotopically essentially surjective.
\end{proof}

\begin{remark}
The use of pointwise-cofibrant spectral categories is analogous to the
use of homotopically flat DG-modules in the differential graded
setting.  See for instance \cite{CisTab} for a similar development to
the work of this section in that context.
\end{remark}

Therefore, we use the subcategory $\Spcat^{\sflat}$ of pointwise-cofibrant
spectral categories to produce a suitable symmetric monoidal model of
the $\i$-category of idempotent-complete small stable
$\i$-categories.  The following lemma is a first consistency check:

\begin{lemma}
Let $\Spcat^{\cc}$ denote the full subcategory of cofibrant objects in
$\Spcat$.  The functor induced by cofibrant replacement
$\Spcat^{\sflat} \to
\Spcat^{\cc}$ induces a categorical equivalence
\[
\N(\Spcat^{\sflat})[\aW^{-1}] \to \N(\Spcat^{\cc})[\aW^{-1}].
\]
\end{lemma}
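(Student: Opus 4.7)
The plan is to exhibit the cofibrant replacement functor as a homotopy inverse to an inclusion. The key observation, already established in the proof of Proposition \ref{prop:flatprops}(1), is that every cofibrant spectral category is pointwise-cofibrant, giving an inclusion of categories $i \colon (\Spcat)^{\cc} \hookrightarrow \Spcat^{\sflat}$. The cofibrant replacement functor $Q \colon \Spcat \to (\Spcat)^{\cc}$ restricts to a functor $Q \colon \Spcat^{\sflat} \to (\Spcat)^{\cc}$, and this restriction is the functor whose induced map of localizations the lemma asserts to be an equivalence.

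Next, I would verify three things: both $i$ and $Q$ preserve Morita equivalences (the former trivially, the latter because cofibrant replacement in any model category takes weak equivalences between cofibrant replacements to weak equivalences, hence a Morita equivalence in $\Spcat^{\sflat}$ is sent to one in $(\Spcat)^{\cc}$); the counit natural transformation $\epsilon \colon Q \Rightarrow \id_{\Spcat}$ takes values in trivial fibrations, in particular in Morita equivalences; restricting $\epsilon$ gives a natural Morita equivalence $iQ \Rightarrow \id_{\Spcat^{\sflat}}$, and restricting further gives a natural Morita equivalence $Qi \Rightarrow \id_{(\Spcat)^{\cc}}$.

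To finish, I would invoke the standard universal property of $\infty$-categorical localization (cf.\ \cite[1.3.4.5]{HA}): any natural transformation between weak-equivalence-preserving functors whose components lie in $\aW$ descends to a natural equivalence of induced functors on the localizations. Applied to the two natural transformations above, this shows that the functors on localizations induced by $Q$ and $i$ are inverse equivalences; in particular the map induced by $Q$ is a categorical equivalence, as required.

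The only mildly delicate point is the coherence of the homotopy-inverse data in the $\infty$-categorical setting, since we are passing from strict natural transformations of ordinary categories to homotopy-coherent data in quasicategories. This, however, is precisely what the localization construction $\N(-)[\aW^{-1}]$ is designed to handle, and it is no harder here than in the analogous argument that cofibrant replacement in any model category induces an equivalence $\N(\aC^{\cc})[\aW^{-1}] \to \N(\aC)[\aW^{-1}]$.
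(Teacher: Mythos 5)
Your proof is correct and takes precisely the same route as the paper's (one-line) argument: the paper simply asserts that the inclusion $(\Spcat)^{\cc}\hookrightarrow\Spcat^{\sflat}$ and the cofibrant replacement $Q$ induce inverse equivalences after localization, which is exactly what you have spelled out, including the key point that the counit $Q\Rightarrow\id$ is a natural Morita equivalence and that such transformations become equivalences after inverting $\aW$.
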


\begin{proof}
Recall from the proof of Proposition~\ref{prop:flatprops} that the
cofibrant replacement of a spectral category is pointwise-cofibrant.
It is now clear that the inclusion and the functorial cofibrant
replacement induce inverse equivalences.
\end{proof}

By combining Proposition~\ref{prop:consist} with \cite[4.22,
4.23]{BGT} we obtain:

\begin{proposition}\label{prop:compsym}
There is an equivalence of $\i$-categories
\[
\idemstabcat\simeq \N(\Spcat^{\sflat})[\aW^{-1}].
\]
\end{proposition}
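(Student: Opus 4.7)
The plan is to chain together three equivalences that have all been established or cited in the preceding material. Let $\aW$ denote the class of Morita equivalences of spectral categories.

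First, I would invoke the lemma stated immediately before the proposition. Via functorial cofibrant replacement $(\Spcat)^{\cc}\to\Spcat^{\sflat}$ paired with the inclusion $\Spcat^{\sflat}\hookrightarrow(\Spcat)^{\cc}$, it yields a categorical equivalence
\[
\N(\Spcat^{\sflat})[\aW^{-1}] \simeq \N((\Spcat)^{\cc})[\aW^{-1}].
\]

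Second, I would apply Proposition~\ref{prop:consist} to the Morita model structure on $\Spcat$. To do so, I need only to observe that this model structure is combinatorial: it arises as a left Bousfield localization of the Dwyer--Kan model structure on spectral categories (which is itself combinatorial by the explicit description of the generating (trivial) cofibrations, as recalled in the proof of Proposition~\ref{prop:flatprops}). Proposition~\ref{prop:consist} then supplies a categorical equivalence
\[
\N((\Spcat)^{\cc})[\aW^{-1}] \simeq \N(L^H\Spcat),
\]
where $L^H$ denotes the Dwyer--Kan hammock localization at $\aW$.

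Third, I would invoke \cite[Prop.~3.20 and Prop.~3.21]{BGT}, which identify the $\i$-category $\N(L^H\Spcat)$ with $\idemstabcat$. Composing the three equivalences produces the desired identification
\[
\idemstabcat \simeq \Spcat^{\sflat}[\aW^{-1}].
\]
I do not foresee any real obstacle: every ingredient is in place, and the only nontrivial verification is that the Morita model structure on $\Spcat$ is combinatorial, which is standard and follows from the combinatoriality of the base model structure together with the accessibility of left Bousfield localizations of combinatorial model categories.
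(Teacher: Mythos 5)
Your argument is essentially the paper's: it chains the preceding lemma, Proposition~\ref{prop:consist}, and \cite[3.20, 3.21]{BGT} in exactly the order the text itself invokes them, and your additional check that the Morita model structure on $\Spcat$ is combinatorial (needed to invoke Proposition~\ref{prop:consist}) is a reasonable and correct piece of bookkeeping that the paper leaves implicit. One small correction: since every cofibrant spectral category is pointwise-cofibrant (\cite[Prop.~4.18]{Spectral}), the inclusion runs $(\Spcat)^{\cc}\hookrightarrow\Spcat^{\sflat}$ and the cofibrant replacement functor runs $\Spcat^{\sflat}\to(\Spcat)^{\cc}$ — you have these two directions transposed, though this does not affect the substance of the argument.
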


Furthermore, Proposition~\ref{prop:flatprops} implies that
$\Spcat^{\sflat}$ is a symmetric monoidal category such that the smash
product preserves equivalences.  Thus we conclude from
\cite[Proposition~4.1.3.4 and Example~4.1.3.6]{HA} that we obtain a
symmetric monoidal $\i$-category
$(\N(\Spcat^{\sflat})[\aW^{-1}])^{\otimes}$ with underlying
$\i$-category $\N(\Spcat^{\sflat})[\aW^{-1}]$.  We now upgrade the
comparison of \cite[4.22, 4.23]{BGT} to a comparison of symmetric
monoidal $\i$-categories.  

To explain how to do this, we need to review some details about the
construction of the symmetric monoidal structure on $\idemstabcat$.
Recall that this is determined by the symmetric monoidal structure on
the $\i$-category $\stabprcat$, as follows.  First, there is an
equivalence between the $\i$-category $\stabcgprcat$ of
compactly-generated stable presentable $\i$-categories and
idempotent-complete small stable $\i$-categories which is realized by
passage to compact objects (denoted $(-)^{\omega}$) and the formation
$\Ind(-)$ of the Ind-category~\cite[\S 5.5.7]{HTT}.  Next,
$\stabcgprcat$ inherits a symmetric monoidal structure from the
structure on $\stabprcat$~\cite[6.3.7.11]{HA}.  Unwinding this, for
$\aC$ and $\aD$ in $\idemstabcat$, the tensor product
$\aC \otimes^{\vee} \aD$ can be computed as
$(\Ind(\aC) \otimes \Ind(\aD))^{\omega}$.

In turn, the symmetric monoidal structure on $\stabprcat$ can be
obtained as a symmetric monoidal localization of the $\i$-category
$\prcat$ of presentable $\i$-categories, as we now describe.  The
symmetric monoidal structure on $\prcat$ is induced from the monoidal
structure on the $\i$-category of $\i$-categories~\cite[6.3.1.14]{HA},
and in $\prcat$ the $\i$-category of spectra $\ispec$ is an idempotent
object~\cite[6.3.2.18]{HA}.  Therefore, the functor $- \otimes \ispec$
in $\prcat$ defines a symmetric monoidal localization on $\prcat$ such that the full subcategory
of local objects is precisely $\stabprcat$~\cite[6.3.2.19]{HA}.

Translating back to small $\i$-categories, the symmetric monoidal
structure on $\prcat$ is determined by the Cartesian monoidal
structure on $\icat$.  Furthermore, we can characterize this structure
in terms of the category $\Scat$ of small simplicial categories.
Since $\Scat$ has all homotopy limits, the associated $\i$-category
$\N(\Scat)[W^{-1}]$ admits a (necessarily unique) Cartesian symmetric monoidal structure
which we denote $\N(\Scat)[W^{-1}]^{\otimes}$;
see \cite[Corollary~2.4.1.9]{HA}.  The uniqueness implies that there
is an equivalence of symmetric monoidal $\i$-categories
\[
\N(\Scat)[W^{-1}]^{\otimes} \htp \icat^{\times}
\]
in which $\icat$ is endowed with the cartesian symmetric monoidal
structure.

Recall that $\ispec$ is an idempotent commutative algebra object of $\prcat$ and that the presheaf functor $\Pre:\icat\too\prcat_\omega$ (specifically, the covariant version in which the functor $\Pre(\aC)\to\Pre(\aD)$ induced by a functor $\aC\to\aD$ is left adjoint to the restriction $\Pre(\aD)\to\Pre(\aC)$) is symmetric monoidal.
We therefore obtain a symmetric monoidal functor
\[
\Pre_{\ispec}\simeq\Pre(-)\otimes\ispec\colon \icat\too\prcat_{\mathrm{st},\omega},
\]
the functor which sends the small $\i$-category $\aC$ to the compactly generated $\i$-category of spectral presheaves on $\aC$.
Composing with the symmetric monoidal equivalence $(-)^\omega\colon \prcat_{\mathrm{st},\omega}\to\idemstabcat$, we obtain a symmetric monoidal functor
\[
\Pre_{\ispec}(-)^\omega\colon \icat\too\idemstabcat.
\]
\begin{lemma}
There is a unique commutative algebra structure
${\idemstabcat}^\otimes$ on $\idemstabcat$ in $\prcat$ such that 
\[
\Pre_{\ispec}(-)^{\omega} \colon \icat \to \idemstabcat
\]
extends to a map of commutative algebras
$\Cat_\i^\times\to{\idemstabcat}^\otimes$.
\end{lemma}

\begin{proof}
Recall that a commutative algebra in $\prcat$ is a symmetric monoidal
$\i$-category such that tensoring with a fixed object preserves
colimits.  As a consequence, it suffices to show that the functor
$\Pre_{\ispec}(-)^{\omega}$ uniquely determines the tensor products of
a collection of objects that generate $\idemstabcat$ under colimits.

As reviewed in Section~\ref{sec:background}, $\idemstabcat$ is
equivalent to the $\i$-category produced by taking the underlying
$\i$-category associated to the model category of spectral categories
with weak equivalences the DK-equivalences and Bousfield localizing at
the Morita equivalences.  This model category is cofibrantly
generated, with generating cofibrations and acyclic cofibrations
(described explicitly in~\cite[\S 2.16]{BergerMoerdijk}) that have
mapping spectra determined by the generating acyclic cofibrations and
acyclic cofibrations of the category of symmetric spectra.  In
particular, the generating cofibrations and acyclic cofibrations have
mapping spectra that are suspension spectra and therefore are in the
image of $\Sigma^{\infty}_+$ applied to the category of simplicial
categories.

Finally, since any object in the Bousfield localization is weakly
equivalent to a cellular object (generated as a filtered colimit of
pushouts along generating cofibrations) in the DK-equivalence model
structure, we see that any object of $\idemstabcat$ is weakly
equivalent to a colimits of objects in the image of
$\Pre_{\ispec}(-)^{\omega}$.  The result now follows from the fact
that $\Pre_{\ispec}(-)^{\omega}$ is strong symmetric monoidal. 
\end{proof}

We finally have all the tools needed for the proof of the
multiplicative Morita theory result.

\begin{theorem}(Multiplicative Morita theory)\label{thm:symcomp}
There is an equivalence of symmetric monoidal $\i$-categories 
\[
(\idemstabcat)^{\otimes}\simeq(\N(\Spcat^{\sflat})[\aW^{-1}])^{\otimes}.
\]
\end{theorem}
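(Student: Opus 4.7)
The plan is to enhance the equivalence of underlying $\i$-categories from Proposition~\ref{prop:compsym} to an equivalence of symmetric monoidal $\i$-categories. By a standard principle in $\i$-category theory, any symmetric monoidal functor whose underlying functor is an equivalence is automatically an equivalence of symmetric monoidal $\i$-categories. Hence it suffices to construct a symmetric monoidal functor
\[
F \colon (\N(\Spcat^{\sflat})[\aW^{-1}])^{\otimes} \to (\idemstabcat)^{\otimes}
\]
whose underlying functor recovers the equivalence of Proposition~\ref{prop:compsym}.

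To build $F$, we apply the machinery of Lemma~\ref{lem:symfunc} and Corollary~\ref{cor:indufunc} to a lax symmetric monoidal model of the modules functor. At the $1$-categorical level, the external smash product of an $\aC$-module with a $\aD$-module, yielding an $\aC \sma \aD$-module, furnishes a natural lax symmetric monoidal structure on the perfect modules construction $\Perf$ that underlies the equivalence of Proposition~\ref{prop:compsym} (as in the proof of Theorems~3.20 and~3.21 of \cite{BGT}). Since $\Perf$ preserves Morita equivalences between pointwise-cofibrant spectral categories by Proposition~\ref{prop:flatprops}, Corollary~\ref{cor:indufunc} produces the required $\i$-operad map between the two localizations, whose underlying functor is the desired equivalence.

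The key remaining step is to verify that $F$ is symmetric monoidal rather than merely a lax $\i$-operad map. Equivalently, the structure maps $\Perf(\aC) \otimes^{\vee} \Perf(\aD) \to \Perf(\aC \sma \aD)$ must be equivalences in $\idemstabcat$ for pointwise-cofibrant $\aC$ and $\aD$. Since the symmetric monoidal structure on $\idemstabcat$ is inherited via the $\Ind$-construction from $\stabprcat$ equipped with Lurie's tensor product, this reduces to the compatibility $\Mod(\aC \sma \aD) \simeq \Mod(\aC) \otimes \Mod(\aD)$ in $\stabprcat$, generalizing from ring spectra to spectral categories a well-known computation of modules over a tensor product. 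This verification is the main obstacle, and the entire flatness apparatus developed earlier in the section is designed to make it tractable: the pointwise-cofibrancy assumption is precisely what ensures that the point-set smash product of spectral categories computes the derived smash product (Proposition~\ref{prop:flatprops}), so that the comparison between the two sides of the structure map can be checked directly at the point-set level.
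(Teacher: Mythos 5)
There is a genuine gap in the construction of the proposed symmetric monoidal functor $F$. The tools you invoke, Lemma~\ref{lem:symfunc} and Corollary~\ref{cor:indufunc}, take as input a point-set lax symmetric monoidal functor $\aC \to \aD$ between \emph{ordinary} symmetric monoidal categories with weak equivalences, and output an $\i$-operad map between the localizations $\N(\aC)[W_{\aC}^{-1}]^{\otimes} \to \N(\aD)[W_{\aD}^{-1}]^{\otimes}$. But your target $(\idemstabcat)^{\otimes}$ is Lurie's abstractly-constructed symmetric monoidal $\i$-category (defined via symmetric monoidal localization of $\prcat^{\otimes}$), not the localization of a point-set symmetric monoidal $1$-category --- unless you use the theorem itself to produce such a model, which is circular. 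Moreover, $\Perf$ with its external-smash lax structure is an endofunctor on $\Spcat^{\sflat}$, so applying Corollary~\ref{cor:indufunc} to it yields a self-map of $(\N(\Spcat^{\sflat})[\cdot^{-1}])^{\otimes}$, not a functor into $(\idemstabcat)^{\otimes}$. Your ``key remaining step'' compounds the issue: verifying $\Perf(\aC) \otimes^{\vee} \Perf(\aD) \to \Perf(\aC\sma\aD)$ is an equivalence requires already knowing how $\otimes^{\vee}$ relates to the image of the smash product, which is precisely the content of the theorem.

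The paper avoids this circularity by inverting the direction of the argument. It characterizes $(\idemstabcat)^{\otimes}$ by the universal property that $-\otimes \ispec^{\omega}\colon \icat \to \idemstabcat$ is the initial colimit-preserving symmetric monoidal functor (via the identification of $\stabprcat^{\otimes}$ as the symmetric monoidal localization of $\prcat^{\otimes}$ at $-\otimes\ispec$, restricted to compactly generated categories). Then it constructs a point-set model of this universal functor as a composite $\Scat^{\sflat} \to \Spcat^{\sflat} \to \Spcat^{\sflat}$ (suspension spectrum followed by $\Perf$), to which Corollary~\ref{cor:indufunc} \emph{does} apply since both source and target are $1$-categories with weak equivalences. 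After localizing at $\Perf$ (which is where the strong monoidality of $\Perf$ on representables is used), the universal property is invoked to identify the resulting symmetric monoidal $\i$-category with $(\idemstabcat)^{\otimes}$. Your strategy could perhaps be salvaged if you first established the identification $\Mod(\aC\sma\aD)\simeq\Mod(\aC)\otimes\Mod(\aD)$ in $\stabprcat$ for pointwise-cofibrant spectral categories and built the lax structure on the underlying equivalence directly, but neither step is carried out, and the paper's route via the universal property sidesteps both.
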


\begin{proof}
Consider the composite 
\[
\xymatrix{
\Phi \colon \Scat^{\f} \ar[r]^-{(-)_+} &
(\Scat^{\f})_* \ar[r]^-{\Sigma^{\infty}} & \Spcat^{\sflat}, \\ 
}
\]
where $\Scat^{\f}$ denotes the full subcategory of fibrant simplicial
categories and $(\Scat)_*$ denotes the pointed simplicial categories
(which means that all the mapping complexes have basepoints).  The
functor $\Phi$ satisfies the requirements of
Proposition~\ref{cor:indufunc}, and so we have an induced map of
symmetric monoidal $\i$-categories   
\[
\xymatrix{
\icat^{\times} \htp \N(\Scat^{\f})[W^{-1}]^{\times} \ar[r]^-{\Sigma^{\infty}_+}
& \N(\Spcat^{\sflat})[\widetilde{W}^{-1}]^{\otimes}, \\
}
\]
where here $\widetilde{W}$ denotes the class of DK-equivalences of
spectral categories.  Next, by Proposition~\ref{prop:flatprops},
composing with the symmetric monoidal localization at the Morita
equivalences of spectral categories gives rise to a map of symmetric
monoidal $\i$-categories
\[
\xymatrix{ 
\theta \colon \icat^{\times} \ar[r]^-{\Sigma^\infty_+}
& \N(\Spcat^{\sflat})[\widetilde{W}^{-1}]^\otimes \ar[r] & 
\N(\Spcat^{\sflat})[\aW^{-1}])^{\otimes}.
}
\]
(Recall that we know from~\cite[4.23]{BGT} that this localization is
in fact the Bousfield localization at a set of generating Morita
equivalences.)
By Proposition~\ref{prop:compsym}, we have that
$\N(\Spcat^{\sflat})[\aW^{-1}]$ is equivalent to $\idemstabcat$.
Therefore, by the discussion preceding the theorem, in order to
identify the symmetric monoidal structure on
$\N(\Spcat^{\sflat})[\aW^{-1}]$ as a model of
$(\idemstabcat)^{\otimes}$, it suffices to identify the composite
$\theta$ as $\Pre_{\ispec}(-)^{\omega}$.

The comparison of \cite[4.22, 4.23]{BGT} identifies the functor
\[
\Psi \colon \SpcatM^{\sflat} \to \SpcatM^{\sflat},
\]
that takes a small pointwise-cofibrant spectral category $\aC$ to the
cofibrant-fibrant homotopically compact objects in the projective
model structure on $\Mod(\aC)$, as the localization at the Morita
equivalences.  Therefore, ignoring the symmetric monoidal structure,
the underlying functor of $\theta$ can be described as the composite
of $\Phi$, the equivalence between $\Spcat$ and $\SpcatM$, and $\Psi$.
But now the identification of $\theta$ is clear, since
\[
\N(\Psi(\Sigma^\i_+\aC))\simeq\N(\Mod(\Sigma^\i_+\aC))^\omega\simeq\N(\Fun(\aC^{\op},\aS_\infty))^\omega
\]
is precisely a model of $\Fun(\N(\aC)^{\op}, \ispec)^{\omega}\simeq\Pre_{\ispec}(\N(\aC))^\omega$.
\end{proof}

One immediate consequence of the preceding comparison result is that
we can explicitly describe the mapping spectra in the tensor product
of small stable $\i$-categories in terms of the smash product of
spectra.  Specifically, let $\aC$ and $\aD$ be small stable idempotent
complete $\i$-categories and $\widetilde{\aC}$ and $\widetilde{\aD}$
cofibrant-fibrant pre-triangulated spectral categories lifting $\aC$
and $\aD$.  Then there is a natural equivalence
\[
(\aC \otimes^{\vee} \aD)((c,d),(c',d')) \htp \widetilde{\aC}(c,c') \sma \widetilde{\aD}(d,d').
\]

\section{The symmetric monoidal structure on noncommutative motives}\label{sec:mon}

In this section, we show that $\Motadd$ and $\Motloc$ are symmetric
monoidal $\i$-categories and that the localization functors
\[
\Umot \colon \idemstabcat \to \Motadd \qquad \textrm{and} \qquad \Uloc \colon \idemstabcat \to \Motloc
\]
are symmetric monoidal.  This is an interesting result in its own
right; for instance, it implies that $\Motadd$ is canonically enriched
in $A(*) = K(\mathbb{S})$-modules (see Corollary~\ref{cor:Aenriched}).
Herein, we use these results to compare symmetric monoidal structures
on the $\i$-categories of colimit-preserving functors from $\Motadd$
to $\ispec$ and additive functors from $\idemstabcat$ to $\ispec$ (and
analogously in the localizing case).

Our approach is motivated by the following classical picture.  If $R$
is a spectrum, then we may view the associated cohomology theory as a
(pre)sheaf of spectra $\cT^{\op}\to\aS_\infty$ on the $\i$-category of
spaces.  If $R$ has an $E_\i$ structure, then this functor is
canonically lax symmetric monoidal via the external cup product
pairing
\[
\xymatrix{
\Map(\Sigma^\i_+ X,R)\land\Map(\Sigma^\i_+Y,R) \ar[r] & \Map(\Sigma^\i_+
(X\times Y),R\land R) \ar[d] \\ \ & \Map(\Sigma^\i_+ (X\times Y), R).
}
\]
In fact, the lax symmetric monoidal structure on
$\Map(\Sigma^\i_+(-),R)$ is equivalent to a symmetric monoidal
structure on $R$ itself, as the latter may be recovered by restricting
to the point.  Here, we are studying the analogous picture in the
setting of noncommutative motives.

We begin by fixing some notation.  We will refer to commutative
algebra objects of $\stabprcat$ as {\it stable presentable symmetric
monoidal $\i$-categories} and denote by $\symstabprcat$ the
$\i$-category on these objects and morphisms the colimit-preserving
symmetric monoidal functors.  Note that this specifies a full
subcategory of the more general class of (large) symmetric monoidal
$\i$-categories: an object of the latter is an object of the former if
and only if the category is stable, presentable, and the monoidal
product commutes with colimits in each variable; see \cite[\S
6.3.2]{HA}.  Also, recall that $\idemstabcat$ is itself presentable
and that the monoidal product commutes with colimits in each variable.

Now, recall that $\Umot$ is defined
to be the composite of the Yoneda embedding 
\[
\phi \colon \idemstabcat \to \Pre((\idemstabcat)^{\omega})
\]
(where the presheaves are restricted to the full subcategory
$(\idemstabcat)^{\omega}$ of $\idemstabcat$ consisting of the compact
objects), followed by stabilization and then localization at a
generating set $\aE$ of split-exact sequences.  The functor $\Uloc$ is
defined analogously, with localization taken with respect to a
generating set of all exact sequences.  As such, our investigation of
the symmetric monoidal structure involves assembling the analysis of
each piece of the composite.

\subsection*{Yoneda embedding and stabilization}
First, we observe that the symmetric monoidal structure on
$\idemstabcat$ descends to the subcategory of compact objects.   For
this, we need a technical proposition.

\begin{proposition}\label{compactclosed}
Let $\aA$ be a compact object of $\idemstabcat$ and $\{\aB_i\}_{i\in
I}$ a filtered diagram in $\idemstabcat$.  Then the map
\[
\colim_{i\in I}\Fun^{\ex}(\aA,\aB_i)\to\Fun^{\ex}(\aA,\colim_{i\in
I}\aB_i)
\]
is an equivalence of small stable idempotent-complete $\i$-categories.
\end{proposition}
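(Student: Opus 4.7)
The plan is to exploit the closed symmetric monoidal structure on $\idemstabcat$. By Theorem~\ref{thm:isym} together with~\cite[\S 6.3]{HA}, $\idemstabcat$ is presentable and $\otimes^{\vee}$ preserves colimits separately in each variable; hence $\idemstabcat$ is closed, with internal hom precisely $\Fun^{\ex}(\aA,-)$, right adjoint to $-\otimes^{\vee}\aA$. I also recall from~\cite{BGT} that $\idemstabcat$ is compactly generated, so that equivalences are detected by mapping out of compact objects.

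The strategy is to reduce the proposition to the assertion that $\otimes^{\vee}$ preserves compact objects in each variable. Granting this, let $\aD$ be an arbitrary compact object of $\idemstabcat$. Then, by successively applying the adjunction, compactness of $\aD$, and compactness of $\aD\otimes^{\vee}\aA$, I compute
\[
\Map(\aD,\colim_i\Fun^{\ex}(\aA,\aB_i)) \simeq \colim_i\Map(\aD\otimes^{\vee}\aA,\aB_i) \simeq \Map(\aD\otimes^{\vee}\aA,\colim_i\aB_i) \simeq \Map(\aD,\Fun^{\ex}(\aA,\colim_i\aB_i)).
\]
Since compact objects detect equivalences, the natural comparison map is an equivalence, yielding the required equivalence of small stable idempotent-complete $\i$-categories.

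To establish that compact objects are closed under $\otimes^{\vee}$, I plan to pass through the Ind-completion. The functor $\Ind$ identifies $\idemstabcat$ with the $\i$-category of compactly generated stable presentable $\i$-categories (with colimit- and compact-preserving morphisms), and is symmetric monoidal with respect to $\otimes^{\vee}$ and Lurie's presentable tensor product. Compact generators of $\Ind(\aA)\otimes\Ind(\aB)$ are given by tensor products of compact generators of the factors, from which compactness of $\aA\otimes^{\vee}\aB$ follows. The main obstacle is this final step: carefully identifying the intrinsic notion of compactness in $\idemstabcat$ with the appropriate finiteness condition on the associated Ind-category, so that tensor products of compact generators provide compact generators of the tensor product.
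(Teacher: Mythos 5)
Your reduction is clean and, granting the claim that compact objects of $\idemstabcat$ are closed under $\otimes^\vee$, the four-step chain of equivalences correctly establishes the proposition (compactly generated $\idemstabcat$ means compact objects detect equivalences). But the key lemma is exactly where the argument breaks down, in two ways.

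First, there is a circularity problem. In the paper this proposition is used to \emph{prove} that compacts are closed under $\otimes^\vee$ (Proposition~\ref{compactsymmon}): one reduces $\map(\aA\otimes^\vee\aB,\colim\aC_i)\simeq\colim\map(\aA\otimes^\vee\aB,\aC_i)$ by adjunction to the assertion that $\Fun^{\ex}(\aB,-)$ commutes with filtered colimits, which is Proposition~\ref{compactclosed}. So the dependency runs in the direction $\ref{compactclosed}\Rightarrow\ref{compactsymmon}$. Your proposal inverts it, which is permissible only if you can prove the closure-under-tensor claim by an argument genuinely independent of the statement you want.

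Second, the sketched Ind-completion argument does not supply such an independent proof. It conflates two distinct notions: compactness of objects \emph{inside} a stable $\i$-category, versus compactness of a small stable $\i$-category \emph{as an object of $\idemstabcat$}. The statement ``compact generators of $\Ind(\aA)\otimes\Ind(\aB)$ are tensor products of compact generators of the factors'' is essentially the definition of $\otimes^\vee$; it identifies $(\Ind(\aA)\otimes\Ind(\aB))^\omega$ with $\aA\otimes^\vee\aB$ but says nothing about whether $\Map_{\idemstabcat}(\aA\otimes^\vee\aB,-)$ preserves filtered colimits. You flag this step yourself as ``the main obstacle'', and indeed it is the whole content: any attempt to verify it by adjunction lands you back on Proposition~\ref{compactclosed}. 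The paper instead argues directly, embedding $\Fun^{\ex}(\aA,\aB)$ fully faithfully into $\Fun^{\ex}(\aA\otimes^\vee\aB^{\op},\ispec)$, observing that the latter commutes with filtered colimits in $\aB$, deducing full faithfulness of the comparison functor, and then using compactness of $\aA$ (only at the level of maximal subgroupoids) to get essential surjectivity. You need either to reproduce something like that direct argument or to find a genuinely independent proof of compactness of $\aA\otimes^\vee\aB$; as written, the proposal leaves the essential step unproved.
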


\begin{proof}
The inclusion
$\Fun^{\ex}(\A,\B)\to\Fun^{\ex}(\A\idemtimes\B^{\op},\ispec)$ is the full
subcategory on those $f:\A\idemtimes\B^{\op}\to\ispec$ which restrict to
representable functors for each $a$ in $A$.  This gives a commuting
square
\[
\xymatrix{
\colim\Fun^{\ex}(\A,\B_i)\ar[r]\ar[d] & \colim\Fun^{\ex}(\A\idemtimes\B_i^{\op},\ispec)\ar[d]\\
\Fun^{\ex}(\A,\colim\B_i)\ar[r] & \Fun^{\ex}(\A\idemtimes\colim\B_i^{\op},\ispec)},
\]
where we use the fact that $(\colim B_i)^{\op} \htp \colim
B_i^{\op}$.  The horizontal maps are fully faithful inclusions and the
vertical map on the right is an equivalence
since $\Fun^{\ex}(\A\idemtimes(-)^{\op},\ispec)$ preserves filtered colimits.
It follows that the vertical map on the left is fully faithful.
Restricting to maximal subgroupoids (i.e., the mapping spaces), we see
that for $A$ compact the left-hand vertical map induces an equivalence
(by definition), so it is also essentially surjective.
\end{proof}

This allows us to prove the following result.

\begin{proposition}\label{compactsymmon}
The $\i$-category of compact objects $(\idemstabcat)^\omega$ in
$\idemstabcat$ is a full symmetric monoidal subcategory. 
\end{proposition}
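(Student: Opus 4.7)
The goal is to show that the tensor product $\otimes^{\vee}$ on $\idemstabcat$ restricts to $(\idemstabcat)^{\omega}$, i.e., that compact objects are closed under $\otimes^{\vee}$ and that the unit $\ispec^{\omega}$ is itself compact. Since the subcategory of compact objects is by definition full, these two closure statements suffice to make the inclusion a symmetric monoidal functor.

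For the unit, I would invoke the characterization of $\ispec^{\omega}$ as the free idempotent-complete stable $\i$-category on one generator: for every $\aA \in \idemstabcat$, evaluation at the sphere spectrum induces an equivalence $\Fun^{\ex}(\ispec^{\omega}, \aA) \htp \aA$. Consequently the representable functor $\Map(\ispec^{\omega}, -)$ identifies with the maximal-subgroupoid (``underlying space'') functor $\idemstabcat \to \Spaces$. Filtered colimits in $\idemstabcat$ are computed by the underlying filtered colimits in $\icat$ (idempotent completion is automatic for filtered colimits of idempotent-complete categories), and the maximal-subgroupoid functor commutes with filtered colimits in $\icat$. Hence $\ispec^{\omega}$ is compact.

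For closure under tensor product, let $\aA, \aB \in (\idemstabcat)^{\omega}$ and $\{\aC_i\}_{i \in I}$ be any filtered diagram in $\idemstabcat$. The symmetric monoidal structure on $\idemstabcat$ is closed, with internal hom $\Fun^{\ex}$, so there is an adjunction equivalence
\[
\Map(\aA \otimes^{\vee} \aB,\, \colim_i \aC_i) \htp \Map\bigl(\aA,\, \Fun^{\ex}(\aB, \colim_i \aC_i)\bigr).
\]
Applying Proposition~\ref{compactclosed} to the compact object $\aB$ gives an equivalence $\Fun^{\ex}(\aB, \colim_i \aC_i) \htp \colim_i \Fun^{\ex}(\aB, \aC_i)$ in $\idemstabcat$. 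Compactness of $\aA$ then yields
\[
\Map\bigl(\aA,\, \colim_i \Fun^{\ex}(\aB, \aC_i)\bigr) \htp \colim_i \Map\bigl(\aA, \Fun^{\ex}(\aB, \aC_i)\bigr) \htp \colim_i \Map(\aA \otimes^{\vee} \aB, \aC_i),
\]
where the last equivalence is the same adjunction applied objectwise. This shows $\aA \otimes^{\vee} \aB$ is compact.

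The main obstacle is really already packaged into Proposition~\ref{compactclosed}: once we know that the internal hom out of a compact object preserves filtered colimits as a functor to $\idemstabcat$ (not merely to $\Spaces$), the adjunction argument is formal. The only other point requiring care is that the internal hom indeed lands in $\idemstabcat$, which is part of the closed symmetric monoidal structure on $\idemstabcat$ cited from~\cite[\S 6.3.1]{HA}. With these two inputs the result follows.
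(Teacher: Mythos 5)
Your argument for closure of $(\idemstabcat)^\omega$ under $\otimes^\vee$ is exactly the paper's: exploit that $\otimes^\vee$ is closed with internal hom $\Fun^{\ex}$, apply Proposition~\ref{compactclosed} to pass $\Fun^{\ex}(\aB,-)$ through the filtered colimit, and then use compactness of $\aA$ to move $\map(\aA,-)$ past it as well. You go slightly further than the paper by explicitly checking that the unit $\ispec^{\omega}$ is compact, which the paper's proof leaves tacit; identifying $\Map(\ispec^{\omega},-)$ with the core functor via the universal property of $\ispec^{\omega}$ is the right move. One small caveat: the parenthetical claim that ``idempotent completion is automatic for filtered colimits of idempotent-complete categories'' is not true at this level of generality --- for arbitrary small $\i$-categories the idempotent-complete ones are closed only under $\omega_1$-filtered colimits, not $\omega$-filtered ones, since $\mathrm{Idem}$ is not a finite simplicial set. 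The statement you need is specific to the stable setting, where a filtered colimit (in $\istabcat$) of idempotent-complete stable $\i$-categories is again idempotent-complete, so you should invoke stability (or a reference) rather than the general-categorical slogan. With that repaired, the unit argument is fine and the rest matches the paper.
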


\begin{proof}
Let $\aA$ and $\aB$ be compact small stable
idempotent-complete $\i$-categories.
We must show that $\aA\otimes^\vee\aB$ is compact.
To this end, let $\aC_i$ be a filtered system of small stable
idempotent-complete $\i$-categories.
By proposition \ref{compactclosed}, we obtain the following sequence
of equivalences
\[
\begin{split}
\map(\aA\otimes^\vee\aB,\colim_{i \in
  I}\aC_i)&\simeq\map(\aA,\Fun^{\ex}(\aB,\colim_{i \in
  I}\aC_i))\\&\simeq\map(\aA,\colim_{i \in I}\Fun^{\ex}(\aB,\aC_i)) \\
&\simeq\colim_{i \in
  I}\map(\aA,\Fun^{\ex}(\aB,\aC_i)) \simeq\map(\aA\otimes^\vee\aB,\aC_i),
\end{split}
\]
which implies that $\map(\aA\otimes^\vee\aB,-)$ commutes with filtered colimits.
(Here we are denoting by $\map(-,-)$ the derived simplicial mapping
space.)
\end{proof}

Next, we use the fact that passage to presheaves and stabilization are
both symmetric monoidal functors.

\begin{proposition}
The stable presentable $\i$-category
\[
\Stab(\Pre(\idemstabcat)^{\omega}) \htp \Fun(((\idemstabcat)^{\omega})^{\op},\ispec)
\]
has a canonical presentable symmetric monoidal structure.
This structure is compatible with the symmetric monoidal structure on 
$\idemstabcat$ in the sense that the functor 
\[
\idemstabcat \to \Stab(\Pre(\idemstabcat)^{\omega})
\]
is symmetric monoidal.
\end{proposition}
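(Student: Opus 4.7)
The plan is to build the required symmetric monoidal structure by composing three standard symmetric monoidal constructions, each of which preserves the presentable, colimit-compatible nature of the tensor product.

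First, by the previous Proposition \ref{compactsymmon}, the subcategory $(\idemstabcat)^{\omega}$ is a symmetric monoidal $\i$-category. Apply Day convolution to endow $\Pre((\idemstabcat)^{\omega})$ with a presentable symmetric monoidal structure; this is the essentially unique structure such that the Yoneda embedding
\[
(\idemstabcat)^{\omega} \hookrightarrow \Pre((\idemstabcat)^{\omega})
\]
is symmetric monoidal and the tensor product preserves colimits in each variable (see \cite[4.8.1.10]{HA}). Next, invoke the fact that stabilization is a symmetric monoidal operation on $\prcat$: the construction $\aC \mapsto \Stab(\aC) \htp \aC \otimes \ispec$ refines to a symmetric monoidal functor $\prcat \to \stabprcat$ by \cite[4.8.2.18]{HA}. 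Applying stabilization to the Day convolution produces the required presentable symmetric monoidal structure on
\[
\Stab(\Pre((\idemstabcat)^{\omega})) \htp \Fun(((\idemstabcat)^{\omega})^{\op}, \ispec).
\]

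For the compatibility with $\idemstabcat$, use the equivalence $\idemstabcat \htp \Ind((\idemstabcat)^{\omega})$. By the universal property of Ind-completion in the symmetric monoidal setting, the symmetric monoidal structure on $(\idemstabcat)^{\omega}$ extends uniquely to a presentable symmetric monoidal structure on $\Ind((\idemstabcat)^{\omega})$ such that the inclusion of compact objects is symmetric monoidal; by uniqueness, this coincides with $\otimes^{\vee}$. The desired functor is then the composite
\[
\idemstabcat \htp \Ind((\idemstabcat)^{\omega}) \hookrightarrow \Pre((\idemstabcat)^{\omega}) \to \Stab(\Pre((\idemstabcat)^{\omega})),
\]
in which every arrow is a colimit-preserving symmetric monoidal functor.

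The main technical point to verify is that the inclusion $\Ind((\idemstabcat)^{\omega}) \hookrightarrow \Pre((\idemstabcat)^{\omega})$ is symmetric monoidal with respect to Day convolution on the target. This holds because Day convolution of two Ind-objects, i.e., filtered colimits of representables, is again an Ind-object: filtered colimits commute with the colimit-extension of the monoidal product from representables. Granting this, the entire argument is formal and reduces to invoking the universal properties of Day convolution, stabilization, and Ind-completion from \cite{HA}.
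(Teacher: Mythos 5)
Your proof is correct and follows essentially the same route as the paper's: Day convolution on presheaves, stabilization as a symmetric monoidal functor on $\prcat$, and then compatibility via compact generation of $\idemstabcat$. The one place where you go further is the last step: the paper simply asserts that the functor $\idemstabcat \to \Stab(\Pre((\idemstabcat)^{\omega}))$ is symmetric monoidal ``since $\idemstabcat$ is compactly generated,'' whereas you unpack this into the factorization through $\Ind((\idemstabcat)^{\omega})$, the observation that Day convolution of Ind-objects is again an Ind-object (because a product of filtered categories is filtered and Day convolution commutes with colimits), and the uniqueness of the colimit-compatible extension of $\otimes$ from $(\idemstabcat)^{\omega}$, which identifies the restricted Day convolution with $\otimes^{\vee}$. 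This is exactly what the paper's remark is implicitly invoking, so the two arguments are the same in substance; yours just makes the implication from compact generation explicit.
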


\begin{proof}
The $\i$-category $\Pre(\aC)$ admits a symmetric monoidal structure
such that the Yoneda embedding $\aC \to \Pre(\aC)$ is a symmetric
monoidal functor; see~\cite[Corollary~6.3.1.12]{HA}.  Furthermore, the
$\i$-category $\Stab(\aC)$ admits a symmetric monoidal structure such
that the stabilization functor $\aC \to \Stab(\aC)$ is symmetric
monoidal; see~\cite[Example 6.3.1.22 and Proposition 6.3.2.18]{HA}.
It follows that $\Stab(\Pre(\idemstabcat)^{\omega})$ is a stable presentable
symmetric monoidal $\i$-category.  Finally, the functor
\[
\idemstabcat \to \Stab(\Pre(\idemstabcat)^{\omega})
\]
is symmetric monoidal since we know that $\idemstabcat$ is compactly 
generated; it is generated under filtered colimits by
$(\idemstabcat)^{\omega}$ \cite[3.22]{BGT} (and $\Ind(\aC)$ is a symmetric
monoidal subcategory of $\Pre(\aC)$~\cite[Proposition~6.3.1.10]{HA}).

\end{proof}

\subsection*{Localization at a generating set}

Finally, in order to show that $\Motadd$ and $\Motloc$ are symmetric
monoidal, it will suffice to show that the localization at $\aE$
inherits the structure of a symmetric monoidal $\i$-category.  As a
consequence of the argument, we will also show that the localization
functor is symmetric monoidal.  Recall that $\aE$ consists of the maps
\[
\widehat{\aB} / \widehat{\aA} \to \widehat{\aC}
\]
in $\Stab(\Pre(\idemstabcat)^{\omega})$ associated to a
generating set of split-exact sequences $\aA \to \aB \to \aC$ in
$(\idemstabcat)^{\omega}$, where here $\widehat{\aB} / \widehat{\aA}$
denotes the cofiber~\cite[\S 5]{BGT}.  

Provided that we can show that the localization functor is compatible
with the symmetric monoidal structure (in the sense
of~\cite[Definition~2.2.1.6]{HA}), then~\cite[Proposition~2.2.1.9]{HA}
will establish that the localization is symmetric monoidal.  Recall
that to show compatibility, by~\cite[Example~2.2.1.7]{HA} it suffices
to show that for every local equivalence $X \to Y$ and any object $Z$,
the induced map $X \otimes Z \to Y \otimes Z$ is a local equivalence.
To do this, we characterize exact sequences of $\i$-categories in
terms of acyclics.  (See also~\cite[\S 7]{BM} for similar
identifications in the language of spectral categories.)  Recall that
an exact sequence is in particular a cofiber sequence~\cite[5.9]{BGT}.

\begin{lemma}\label{lem:im=ker}
Let $\aA\to\aB\to\aC$ be a cofiber sequence of idempotent-complete
small stable $\i$-categories such that $\aA\to\aB$ is fully faithful.
Then $\aA$ is canonically equivalent to the fiber (over $0$) of
$\aB\to\aC$, i.e., the full subcategory of $\aB$ on the objects which
are equivalent to $*$ in $\aC$.
\end{lemma}

\begin{proof}
Write $\aA'$ for the fiber of $\aB\to\aC$ and let $\aA\to\aA'$ be the
resulting map, which is necessarily fully faithful.  To check that it
is essentially surjective, it suffices to check on triangulated
homotopy categories.  Let $b$ be an object of $\aA'$.
Recall that we can identify $\Ho(\aC) \htp \Ho(\aB/\aA)$ as the
Verdier quotient of triangulated categories $\Ho(\aB)
/ \Ho(\aA)$~\cite[5.13]{BGT}.
By the description of maps in the Verdier quotient, we can easily
check that the identity map $b\to b$ must be equal to the zero map
$b\to b$.  That is, there is a commutative diagram
\[
\xymatrix{
& b\ar[rd]^{\id}\ar[ld] &\\ b & 0\ar[u]\ar[l]\ar[r]\ar[d] & b\\ &
b\ar[ur]_{0}\ar[ul] &}
\]
which implies that the cofiber $b$ of $0\to b$ must lie in $\aA$.
\end{proof}

\begin{lemma}\label{lem:tensorexact}
Let $\aA\to\aB\to\aC$ be an exact sequence of idempotent-complete small stable $\i$-categories.
Then, for any idempotent-complete small stable $\i$-category $\aD$, the sequence
\[
\aD\otimes\A\too\aD\otimes\aB\too\aD\otimes\aC
\]
is exact.
\end{lemma}

\begin{proof}
Since the tensor product on $\idemstabcat$ commutes with colimits, it
is enough to show that $\aD\otimes\aA$ is equivalent to the full
subcategory $\aF$ of $\aD\otimes\aB$ consisting of those objects which
are sent to zero objects in $\aD\otimes\aC$.  We first show that the
functor $\aD\otimes\aA\to\aD\otimes\aB$ is fully faithful by direct
computation of the mapping spectra: given a pair of objects $d,d'$ of
$\aD$ and $a,a'$ of $\aA$ with images $b,b'$ in $\aB$, we have that
\begin{align*}
\Map((d,a),(d',a'))\simeq&\Map(d,d')\land\Map(a,a')\simeq\\
&\Map(d,d')\land\Map(b,b')\simeq\Map((d,b),(d',b')).
\end{align*}
Since $\aD\otimes\aA$ is stable, it follows that the inclusion $\aD\otimes\aA\to\aD\otimes\aB$ is fully faithful on all objects which are retracts of finite colimits of objects of the form $(d,a)$; since all objects of $\aD\otimes\aA$ are of this form, we see that the inclusion is fully faithful.
The fact that $\aD\otimes\aA$ surjects onto $\aF$ now follows from lemma \ref{lem:im=ker}.
\end{proof} 

\begin{proposition}\label{prop:compat}
The localization of
$\Stab(\Pre(\idemstabcat)^{\omega})$ at $\aE$ is
compatible with the symmetric monoidal structure given above.
\end{proposition}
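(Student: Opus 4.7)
The strategy is to apply the criterion~\cite[2.2.1.7]{HA} stated just before the proposition: it suffices to show that for every $f \in \aE$ and every object $Z \in \Stab(\Pre((\idemstabcat)^{\omega}))$, the map $f \otimes \id_Z$ is a local equivalence. I plan to reduce to the case where $Z$ is a (shift of a) representable and then exhibit $f \otimes \id_Z$ itself as a map of the generating form.

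First I would reduce to representables. The class of local equivalences is strongly saturated, hence closed under colimits in the arrow category. Since the symmetric monoidal product on $\Stab(\Pre((\idemstabcat)^{\omega}))$ preserves colimits separately in each variable (by~\cite[6.3.1.12]{HA} and~\cite[6.3.1.22]{HA}), the full subcategory of $Z$ for which $f \otimes \id_Z$ is a local equivalence is closed under colimits. Because $\Stab(\Pre((\idemstabcat)^{\omega}))$ is generated under colimits by the shifts of the representable presheaves $\widehat{\aD}$ with $\aD \in (\idemstabcat)^{\omega}$, it is enough to treat $Z = \widehat{\aD}$.

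Next I would compute $f \otimes \widehat{\aD}$ explicitly. For a split-exact sequence $\aA \to \aB \to \aC$ in $(\idemstabcat)^{\omega}$, the map $f \in \aE$ is $\mathrm{Cof}(\widehat{\aA} \to \widehat{\aB}) \to \widehat{\aC}$. Since the stabilized Yoneda embedding $(\idemstabcat)^{\omega} \to \Stab(\Pre((\idemstabcat)^{\omega}))$ is symmetric monoidal and since the tensor preserves the cofiber (as it preserves colimits), we get
\[
f \otimes \widehat{\aD} \;\simeq\; \bigl(\mathrm{Cof}(\widehat{\aA \otimes^{\vee} \aD} \to \widehat{\aB \otimes^{\vee} \aD}) \to \widehat{\aC \otimes^{\vee} \aD}\bigr).
\]
By Proposition~\ref{compactsymmon}, the objects $\aA \otimes^{\vee} \aD$, $\aB \otimes^{\vee} \aD$, and $\aC \otimes^{\vee} \aD$ lie in $(\idemstabcat)^{\omega}$, so $f \otimes \widehat{\aD}$ has the correct shape to be an element of $\aE$, provided the tensored sequence is still split-exact.

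The remaining point, which I expect to be the main obstacle requiring care, is verifying that tensoring a split-exact sequence in $(\idemstabcat)^{\omega}$ with a compact idempotent-complete stable $\i$-category $\aD$ yields another split-exact sequence. I would argue this formally: split-exactness of $\aA \to \aB \to \aC$ means there exist sections/retractions exhibiting $\aB$ as (a retract of) a sum involving $\aA$ and $\aC$ up to a nullhomotopic composite, a structure given by specified morphisms and homotopies in $\idemstabcat$. Applying the (symmetric monoidal) functor $(-) \otimes^{\vee} \aD$ preserves all of these morphisms and coherences, and likewise preserves the nullhomotopies witnessing the splitting. Hence $\aA \otimes^{\vee} \aD \to \aB \otimes^{\vee} \aD \to \aC \otimes^{\vee} \aD$ is split-exact, so $f \otimes \widehat{\aD}$ lies in $\aE$ and is in particular a local equivalence. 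Invoking~\cite[2.2.1.8]{HA} completes the proof and shows simultaneously that the localization functor is itself symmetric monoidal.
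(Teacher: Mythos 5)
Your proposal is correct and essentially follows the same route as the paper's proof: reduce to the case $Z=\widehat{\aD}$ a representable by commutativity of the tensor with (sufficiently many) colimits, compute $f\otimes\widehat{\aD}$ via the symmetric monoidal Yoneda embedding, and observe that split-exactness of $\aA\to\aB\to\aC$ is preserved under $(-)\otimes^\vee\aD$ because the split structure is given by morphisms and homotopies transported by any functor. The paper phrases the criterion dually (in terms of mapping spaces into local objects $F$ rather than $f\otimes\id_Z$ being a local equivalence), but the content, including the key step on preservation of split-exactness, is the same.
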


\begin{proof}
Since the $\i$-category
$\Stab(\Pre(\idemstabcat)^{\omega})$ is generated
under filtered colimits by representables and these tensors commute
with filtered colimits, it suffices to check that 
\[
\map(\widehat{\aD}\otimes(\widehat{\aC}/(\widehat{\aB}/\widehat{\aA})),F)\simeq*
\]
for all $\aD$ and split-exact sequences $\aA\to\aB\to\aC$ in $\aE$.
This follows because
\[
\widehat{\aD}\otimes(\widehat{\aC}/(\widehat{\aB}/\widehat{\aA}))\simeq\widehat{\aD\otimes \aC}/(\widehat{\aD\otimes \aB}/\widehat{\aD\otimes \aA}),
\]
and, by lemma \ref{lem:tensorexact}, $\aD\otimes \aA\to \aD\otimes \aB\to \aD\otimes \aC$ is
exact when $\aA\to \aB\to \aC$ is exact, and therefore split-exact when $\aA\to\aB\to\aC$ is split-exact.
\end{proof}

Similarly, we have the following result for localizing invariants:

\begin{proposition}\label{JLoc}
The localization of $\Stab(\Pre(\idemstabcat)^{\omega})$ at the set 
generated by those objects of the form
$\widehat{\aC}/(\widehat{\aB}/\widehat{\aA})$ for each (equivalence
class of) exact sequence $\aA\to \aB\to \aC$ with $\aB$
$\kappa$-compact is compatible with the symmetric monoidal structure
given above.
\end{proposition}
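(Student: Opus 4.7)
The plan is to follow the template of Proposition \ref{prop:compat}, adjusting it to accommodate the $\kappa$-compactness hypothesis on $\aB$. By \cite[2.2.1.7]{HA}, compatibility with the symmetric monoidal structure reduces to showing that tensoring any generating local equivalence with an arbitrary object yields a local equivalence. Equivalently, letting $F$ be a local object (i.e., one in the image of the localization), it suffices to verify that
\[
\map\bigl(Z\otimes(\widehat{\aC}/(\widehat{\aB}/\widehat{\aA})),F\bigr) \htp \ast
\]
for every $Z \in \Stab(\Pre((\idemstabcat)^{\omega}))$ and every exact sequence $\aA\to\aB\to\aC$ with $\aB$ being $\kappa$-compact. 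Because the source is generated under filtered colimits by the representables $\widehat{\aD}$ for $\aD\in(\idemstabcat)^{\omega}$ and the monoidal product commutes with filtered colimits in each variable, it is enough to treat $Z=\widehat{\aD}$.

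For such $Z=\widehat{\aD}$ with $\aD$ compact, the fact that the composite $\idemstabcat \to \Stab(\Pre((\idemstabcat)^{\omega}))$ is symmetric monoidal yields
\[
\widehat{\aD}\otimes\bigl(\widehat{\aC}/(\widehat{\aB}/\widehat{\aA})\bigr) \htp \widehat{\aD\otimes^{\vee}\aC}/\bigl(\widehat{\aD\otimes^{\vee}\aB}/\widehat{\aD\otimes^{\vee}\aA}\bigr),
\]
so the problem reduces to two verifications about the sequence $\aD\otimes^{\vee}\aA \to \aD\otimes^{\vee}\aB \to \aD\otimes^{\vee}\aC$: first, that it is exact, and second, that $\aD\otimes^{\vee}\aB$ is $\kappa$-compact. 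The first follows from the fact that the monoidal product on $\idemstabcat$ is biexact (the product on $\stabprcat$ preserves colimits, hence cofiber sequences, in each variable, and this exactness descends to the full symmetric monoidal subcategory $\idemstabcat$). The second follows from a straightforward $\kappa$-compact variant of Proposition \ref{compactclosed}: the bifunctor $\Fun^{\ex}(\aB,-)$ preserves $\kappa$-filtered colimits of small idempotent-complete stable $\i$-categories because $\aB$ is $\kappa$-compact, and $\map(\aD,-)$ preserves all filtered colimits because $\aD$ is compact; combining these via the adjunction $\map(\aD\otimes^{\vee}\aB,-) \htp \map(\aD,\Fun^{\ex}(\aB,-))$ shows that $\aD\otimes^{\vee}\aB$ is $\kappa$-compact.

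The main potential obstacle is point (ii), namely the stability of the $\kappa$-compact cardinality estimate under the monoidal product; however, this is formally the same computation as in Proposition \ref{compactclosed} with ``filtered'' replaced by ``$\kappa$-filtered'' throughout, and no further input beyond the hypothesis that $\aD$ is compact is required. Once these two facts are in place, the vanishing of $\map(\widehat{\aD\otimes^{\vee}\aC}/(\widehat{\aD\otimes^{\vee}\aB}/\widehat{\aD\otimes^{\vee}\aA}),F)$ is immediate from the definition of $F$ as local with respect to the generating set, completing the verification of compatibility and hence of the proposition.
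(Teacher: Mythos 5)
Your proof follows the paper's approach: reduce to representables $\widehat{\aD}$ with $\aD$ compact, use that the Yoneda composite $\idemstabcat\to\Stab(\Pre((\idemstabcat)^\omega))$ is symmetric monoidal to rewrite $\widehat{\aD}\otimes(\widehat{\aC}/(\widehat{\aB}/\widehat{\aA}))$ as $\widehat{\aD\otimes^\vee\aC}/(\widehat{\aD\otimes^\vee\aB}/\widehat{\aD\otimes^\vee\aA})$, then appeal to the fact that the induced sequence is exact. You do, however, go beyond the printed proof by explicitly observing that the resulting object belongs to the generating set only if $\aD\otimes^\vee\aB$ is itself $\kappa$-compact, and by supplying an argument for that fact via a $\kappa$-filtered variant of Proposition~\ref{compactclosed} combined with the closed-monoidal adjunction $\map(\aD\otimes^\vee\aB,-)\simeq\map(\aD,\Fun^{\ex}(\aB,-))$. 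The paper's two-line proof silently assumes this stability of $\kappa$-compactness under tensoring with a compact object, so your version is the more careful of the two. The argument you sketch for the $\kappa$-variant is sound: the proof of Proposition~\ref{compactclosed} uses compactness of the source only at the final step (passage to maximal subgroupoids), and that step goes through verbatim with ``$\kappa$-filtered'' replacing ``filtered'' and ``$\kappa$-compact'' replacing ``compact''; the right-hand vertical equivalence, coming from preservation of filtered colimits by $\Fun^{\ex}(\aB\idemtimes(-)^{\op},\ispec)$, automatically applies to $\kappa$-filtered colimits as well.
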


\begin{proof}
As in the proof of \ref{prop:compat}, it is enough to check that
$\map(\widehat{\aD}\otimes(\widehat{\aC}/(\widehat{\aB}/\widehat{\aA})),F)\simeq 
*$ for all $\aD$ and exact sequences $\aA\to\aB\to\aC$ with $\aB$
$\kappa$-compact.  The result follows because 
\[
\widehat{\aD}\otimes(\widehat{\aC}/(\widehat{\aB}/\widehat{\aA}))\simeq\widehat{\aD\otimes \aC}/(\widehat{\aD\otimes \aB}/\widehat{\aD\otimes \aA}), 
\]
and $\aD\otimes \aA\to \aD\otimes \aB\to \aD\otimes \aC$ is exact
since $\aA\to \aB\to \aC$ is exact by lemma \ref{lem:tensorexact}.
\end{proof}

Summarizing, we have the following theorem:

\begin{theorem}\label{thm:sym}
The $\i$-categories $\Motadd$ and $\Motloc$ are endowed with natural
symmetric monoidal structures making the functors $\Umot$ and
$\Uloc$ symmetric monoidal.  The tensor units are
$\Umot(\ispec^{\omega})$ and $\Uloc(\ispec^{\omega})$ respectively.
\end{theorem}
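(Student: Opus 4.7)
The plan is to assemble the three preparatory results established earlier in this section into a single statement. The construction $\Umot$ factors as $\idemstabcat \xrightarrow{\phi} \Pre((\idemstabcat)^{\omega}) \to \Stab(\Pre((\idemstabcat)^{\omega})) \to \Motadd$, where the final arrow is Bousfield localization at the set $\aE$; and similarly for $\Uloc$ using localization at a generating set for exact sequences. My goal is to verify that each of the three stages is canonically symmetric monoidal and that each functor between stages is symmetric monoidal.

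First, I would invoke Proposition~\ref{compactsymmon} to ensure that $(\idemstabcat)^{\omega}$ is closed under $\otimes^{\vee}$, so that it makes sense to form $\Pre((\idemstabcat)^{\omega})$ with the convolution (Day) monoidal structure. Then, by the proposition preceding this theorem, the composite $\idemstabcat \to \Stab(\Pre((\idemstabcat)^{\omega}))$ is already symmetric monoidal, combining the Yoneda/Day embedding \cite[6.3.1.12]{HA} with the symmetric monoidal stabilization \cite[6.3.1.22]{HA}.

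Second, I would appeal to Propositions~\ref{prop:compat} and~\ref{JLoc}, which check the compatibility criterion of \cite[2.2.1.6]{HA} (equivalently, \cite[2.2.1.7]{HA}) for the localizations defining $\Motadd$ and $\Motloc$. This compatibility is exactly what \cite[2.2.1.8]{HA} requires in order to transfer the symmetric monoidal structure from $\Stab(\Pre((\idemstabcat)^{\omega}))$ to the localizations, producing symmetric monoidal $\i$-categories $\Motadd$ and $\Motloc$ for which the localization functors are symmetric monoidal. Composing with the previous step yields that $\Umot$ and $\Uloc$ are symmetric monoidal.

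Finally, the identification of the tensor units is formal: any symmetric monoidal functor preserves units, so since $\ispec^{\omega}$ is the tensor unit of $\idemstabcat$ (it is the unit of $\stabprcat$ restricted to compact objects along the symmetric monoidal subcategory inclusion from Proposition~\ref{compactsymmon}), the unit of $\Motadd$ is $\Umot(\ispec^{\omega})$ and that of $\Motloc$ is $\Uloc(\ispec^{\omega})$. The main (and really only) subtlety I would expect is the verification that the localizing classes are closed under tensoring with arbitrary objects, which is precisely the content of Propositions~\ref{prop:compat} and~\ref{JLoc}; the rest is a bookkeeping exercise that chains together the symmetric monoidal enhancements already encoded in \cite{HA}.
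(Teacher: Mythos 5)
Your proposal is correct and follows the same route as the paper: decompose $\Umot$ and $\Uloc$ into Yoneda/Day embedding, stabilization, and localization, then invoke Proposition~\ref{compactsymmon}, the symmetric monoidal structure on $\Stab(\Pre((\idemstabcat)^\omega))$, and Propositions~\ref{prop:compat} and~\ref{JLoc} to transfer the monoidal structure through the localization via \cite[2.2.1.8]{HA}. The identification of the tensor units by preservation of units under symmetric monoidal functors is likewise what the paper has in mind.
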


In particular, this implies that algebraic $K$-theory is a lax
symmetric monoidal functor.  

\begin{proposition}\label{prop:kmonoidal}
The functors
\[
K(-) = \Map(\Umot(\ispec^{\omega}),\Umot(-))) \colon \idemstabcat \to \ispec
\]
and 
\[
\bbK(-) = \Map(\Uloc(\ispec^{\omega}),\Uloc(-))) \colon \idemstabcat \to \ispec
\]
are lax symmetric monoidal.
\end{proposition}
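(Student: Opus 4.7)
The plan is to derive both lax symmetric monoidal structures formally from Theorem~\ref{thm:sym}, together with the general principle that in any symmetric monoidal stable presentable $\i$-category $(\aC,\otimes,\mathbf{1})$, the mapping spectrum functor $\Map_{\aC}(\mathbf{1},-) \colon \aC \to \ispec$ carries a canonical lax symmetric monoidal structure.

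First I would briefly justify this general principle. The tensor unit $\mathbf{1}$ is canonically an $E_\i$-algebra object of $\aC$---indeed the initial such, by Corollary~\ref{cor:triv}---and for any $E_\i$-algebra $A$ of $\aC$, the functor $\Map_{\aC}(A,-)$ is lax symmetric monoidal with structure maps
\[
\Map(A,X) \otimes \Map(A,Y) \to \Map(A \otimes A,\, X \otimes Y) \xrightarrow{\mu^*} \Map(A,\, X \otimes Y)
\]
induced by the multiplication $\mu \colon A \otimes A \to A$, and unit map the image of $\id_{A}$. This is a standard consequence of the machinery of \cite[\S 3.2]{HA}; alternatively, it can be stated as the observation that the right adjoint to the symmetric monoidal functor $- \otimes \mathbf{1} \colon \ispec \to \aC$ is automatically lax symmetric monoidal.

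Next I would apply this principle with $\aC = \Motadd$ and $\aC = \Motloc$, each of which is a symmetric monoidal stable presentable $\i$-category with tensor unit $\Umot(\ispec^{\omega})$ and $\Uloc(\ispec^{\omega})$ respectively by Theorem~\ref{thm:sym}. The composite
\[
\idemstabcat \xrightarrow{\Umot} \Motadd \xrightarrow{\Map(\Umot(\ispec^{\omega}),-)} \ispec,
\]
being the composition of a symmetric monoidal functor with a lax symmetric monoidal one, is itself lax symmetric monoidal; by the equivalences~\eqref{eq:corep} it is precisely the connective $K$-theory functor $K(-)$. The identical argument applied to $\Uloc$ yields the statement for $\bbK$.

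There is no substantive obstacle here: essentially all of the work is already contained in Theorem~\ref{thm:sym}, and the only remaining ingredient is the general fact that mapping out of the unit object is lax symmetric monoidal, which is formal and well known.
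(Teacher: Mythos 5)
Your proposal is correct and follows essentially the same route as the paper: both reduce to showing $\Map(\Umot(\ispec^{\omega}),-)$ is lax symmetric monoidal, and your ``alternative'' phrasing (the right adjoint to the symmetric monoidal functor $-\otimes\mathbf{1}\colon\ispec\to\Motadd$ is lax symmetric monoidal) is precisely what the paper does, invoking the adjoint functor theorem to produce the left adjoint, identifying it with $\Umot(\ispec^{\omega})\otimes(-)$ via the tensoring over $\ispec$, and then citing \cite[7.3.2.7]{HA} for the lax monoidality of the right adjoint.
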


\begin{proof}
We give the argument for $K(-)$; the proof for $\bbK(-)$ is the same.
Our work in \cite[\S 4]{BGT} gave a construction for any stable
$\i$-category of the mapping spectrum functor $\Map_{\aC}(X,-)$.
Since $\Umot$ is lax symmetric monoidal, it will suffice to show that
$\Map(\Umot(\ispec^{\omega}),-)$ is lax symmetric monoidal.  First,
observe that this functor preserves limits, and so by the adjoint
functor theorem \cite[5.5.2.9]{HTT} it has a left adjoint.  The left
adjoint can be described as follows.  Since $\Motadd$ is a presentable stable
$\i$-category, it is tensored over $\ispec$
(see \cite[Remark~6.3.2.17]{HA}) in the sense of~\cite[Definition
4.2.1.19]{HA}.  Therefore, the left adjoint is given by
$\Umot(\ispec^{\omega}) \otimes (-)$.  By definition, this left
adjoint is symmetric monoidal.
Then,~\cite[Corollary~8.3.2.7]{HA} implies (as 
in \cite[Example~8.3.2.8]{HA}) that there exists a lax symmetric
monoidal right adjoint extending $\Map(\Umot(\ispec^{\omega}),-)$.
\end{proof}

This result in turn has the following corollary: 

\begin{corollary}\label{cor:kmon}
Let $\aA$ be an $E_n$ object in the symmetric monoidal $\i$-category
$\idemstabcat$, $0 \leq n\leq\infty$.  Then $K(\aA)$ and $\bbK(\aA)$
are $E_n$ ring spectra.
\end{corollary}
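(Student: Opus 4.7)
The corollary is essentially a formal consequence of Proposition~\ref{prop:kmonoidal} together with the basic functoriality of $E_n$-algebra objects under lax symmetric monoidal (equivalently, $\i$-operad) maps. The plan is to spell out this functoriality and then simply apply it.

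First I would recall that a lax symmetric monoidal functor $F \colon \aC \to \aD$ between symmetric monoidal $\i$-categories is, by the discussion preceding Lemma~\ref{lem:symfunc}, nothing more than a map of $\i$-operads $F^{\otimes} \colon \aC^{\otimes} \to \aD^{\otimes}$ over $\N(\Gamma)$. For any $\i$-operad $\aO^{\otimes}$ (in particular $\aO^{\otimes} = E_n$ for $0 \leq n \leq \infty$), composition with $F^{\otimes}$ induces a functor
\[
F_{\ast} \colon \Alg_{\aO}(\aC) \longrightarrow \Alg_{\aO}(\aD),
\]
since the composite of $\i$-operad maps is an $\i$-operad map. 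Moreover, the underlying object of $F_{\ast}(A) \in \Alg_{\aO}(\aD)$ is $F(A) \in \aD$, by the construction of the forgetful map $\Alg_{/\aO}(\aC) \to \aC$.

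Now I would apply this with $\aC = \idemstabcat$, $\aD = \ispec$, $\aO^{\otimes} = E_n$, and $F$ equal to the lax symmetric monoidal functor $K(-)$, respectively $\bbK(-)$, provided by Proposition~\ref{prop:kmonoidal}. Given an $E_n$-object $\aA$ of $\idemstabcat$, i.e., an object of $\Alg_{/E_n}(\idemstabcat)$, we obtain an $E_n$-algebra structure on the underlying spectrum $K(\aA)$, and similarly on $\bbK(\aA)$. By definition an $E_n$-algebra in $\ispec$ is an $E_n$-ring spectrum, which is the desired conclusion.

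There is no serious obstacle here: the only thing one might worry about is whether the lax symmetric monoidal structure produced in Proposition~\ref{prop:kmonoidal} is really an $\i$-operad map in the sense required, but this is precisely what the invocation of \cite[7.3.2.7]{HA} guarantees in that proof, via the right-adjoint criterion for lax symmetric monoidality. With this identification in hand, the corollary reduces to the purely formal statement that $\i$-operad maps transport $E_n$-algebras to $E_n$-algebras.
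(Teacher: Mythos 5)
Your proposal is correct, and it matches the paper's intended argument: the paper gives no written proof of Corollary~\ref{cor:kmon} precisely because it is meant to follow formally from Proposition~\ref{prop:kmonoidal} in the way you describe. Your observation that a lax symmetric monoidal functor is by definition an $\i$-operad map over $\N(\Gamma)$, so that post-composition carries $\Alg_{E_n}(\idemstabcat)$ to $\Alg_{E_n}(\ispec)$ compatibly with the forgetful functors, is exactly the standard mechanism the authors are invoking implicitly.
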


Since an $E_n$ ring spectrum has an $\i$-category of compact modules
which is an $E_{n-1}$ object in
$\idemstabcat$~(see \cite[Theorem~5.1.4.2]{HA}), we can specialize
corollary~\ref{cor:kmon} to conclude that algebraic $K$-theory takes
$E_n$ ring spectra to $E_{n-1}$ ring spectra.

\begin{corollary}
Let $R$ be an $E_n$ ring spectrum.  Then $K(R)$ and $\bbK(R)$ are
$E_{n-1}$ ring spectra.
\end{corollary}

\subsection*{Symmetric monoidal structure on additive and localizing invariants}\label{s:symaddinv}

We now obtain ``dual'' symmetric monoidal structures on the functor
$\i$-categories of additive and localizing invariants. 

\begin{proposition}
Let $\C$ be a small symmetric monoidal $\i$-category.  Then there is
a natural equivalence 
\[
\Fun^\L(\Stab(\Pre(\C)),\cS_\infty) \htp \Stab(\Pre(\C^{\op}))
\]
between the dual of $\Stab(\Pre(\C))$, the stable presentable
symmetric monoidal $\i$-category generated by $\aC$, and the stable
presentable symmetric monoidal $\i$-category generated by
$\aC^{\op}$.  
\end{proposition}

\begin{proof}
This follows from a calculation: If
$\Stab(\Pre(\C))=\Fun(\C^{\op},\cS_\infty)$ is the presentable stable
$\i$-category freely generated by the small $\i$-category $\C$, then 
\[
\Fun^\L(\Stab(\Pre(\C)),\cS_\infty)\simeq\Fun^\L(\Pre(\C),\cS_\infty)\simeq\Fun(\C,\cS_\infty)\simeq\Stab(\Pre(\C^{\op}))
\]
is a presentable stable $\i$-category which is dual to
$\Stab(\Pre(\C))$ under the symmetric monoidal structure on
$\stabprcat$.  (See~\cite[\S 3.3]{BGT} for further discussion.)
\end{proof}

In particular, we have the following:

\begin{corollary}\label{cor:dualmon}
Let $\C$ be a small symmetric monoidal $\i$-category.  Then the
$\i$-category $\Fun^\L(\Stab(\Pre(\C)), \cS_\infty)$ is symmetric monoidal.
\end{corollary}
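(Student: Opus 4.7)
The plan is to deduce this directly from the preceding proposition, which provides an equivalence
\[
\Fun^\L(\Stab(\Pre(\C)), \cS) \htp \Stab(\Pre(\C^{\op})).
\]
The right-hand side is manifestly a free commutative $\ispec$-algebra on the symmetric monoidal $\i$-category $\C^{\op}$, so it inherits a symmetric monoidal structure; transporting this along the equivalence yields the desired structure on $\Fun^\L(\Stab(\Pre(\C)), \cS)$.

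The first step will be to note that the opposite $\C^{\op}$ of a (small) symmetric monoidal $\i$-category is again symmetric monoidal in a canonical way, with tensor product and unit inherited from $\C$. Next, I would invoke the constructions used earlier in the section: by \cite[6.3.1.12]{HA} the presheaf category $\Pre(\C^{\op})$ carries a symmetric monoidal structure (the Day convolution) for which the Yoneda embedding $\C^{\op}\to\Pre(\C^{\op})$ is symmetric monoidal, and by \cite[6.3.1.22]{HA} stabilization $\Pre(\C^{\op})\to\Stab(\Pre(\C^{\op}))$ lifts to a symmetric monoidal functor. Thus $\Stab(\Pre(\C^{\op}))$ is a stable presentable symmetric monoidal $\i$-category.

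Finally, I would transport this structure across the equivalence of the preceding proposition. Because the proposition identifies the two sides as objects of $\stabprcat$ (via the self-duality of that $\i$-category on compactly generated subcategories), the transported structure makes $\Fun^\L(\Stab(\Pre(\C)),\cS)$ into a symmetric monoidal $\i$-category compatible with its presentable stable structure, yielding the corollary.

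The only potentially subtle point — and the one I would expect to require the most care — is making sure that the equivalence in the preceding proposition is genuinely an equivalence in $\stabprcat$ (and hence transports symmetric monoidal structures) rather than just at the level of underlying $\i$-categories. But since that equivalence is assembled from the self-duality of presheaf categories and the identification $\Fun^\L(\Pre(\C),\cS)\htp\Fun(\C,\cS)$, which is a symmetric monoidal equivalence under Day convolution, no additional work is needed beyond recording this.
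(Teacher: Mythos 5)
Your argument is essentially identical to the paper's: both note that $\C^{\op}$ is again small symmetric monoidal, invoke \cite[6.3.1.12]{HA} and \cite[6.3.1.22]{HA} to endow $\Stab(\Pre(\C^{\op}))$ with a symmetric monoidal structure, and then use the preceding proposition's equivalence $\Fun^\L(\Stab(\Pre(\C)),\cS)\htp\Stab(\Pre(\C^{\op}))$ to conclude. The caveat you raise at the end is not really needed: any equivalence of $\i$-categories suffices to transport a symmetric monoidal structure, so no separate check is required beyond what the preceding proposition already provides.
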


\begin{proof}
If $\C$ is a small symmetric monoidal $\i$-category, then $\C^{\op}$
is also a small symmetric monoidal $\i$-category.
Therefore,~\cite[Corollary 6.3.1.12]{HA} shows that the Day
convolution product endows $\Pre(\C^{\op})$ with a canonical symmetric
monoidal structure.  Since stabilization is a symmetric monoidal
functor, this implies that $\Stab(\Pre(\C^{\op}))$ is also symmetric
monoidal.
\end{proof}

Proposition~\ref{prop:compat} and Corollary~\ref{cor:dualmon} now
imply the $\i$-categories $\Fun^\L(\Motadd, \ispec)$ and $\Fun^\L(\Motloc,
\ispec)$ are symmetric monoidal. Analogous results for the categories
of additive and localizing invariants hold:

\begin{theorem}\label{thm:bodymaintwo}
The $\i$-categories $\Fun_\add(\idemstabcat,\cS_\infty)$ and
$\Fun_\loc(\idemstabcat,\cS_\infty)$ are symmetric monoidal
$\i$-categories.  The units are the connective and non-connective
algebraic $K$-theory functors $K(-)$ and $\bbK(-)$.
\end{theorem}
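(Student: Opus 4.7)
The plan is to transport the symmetric monoidal structure on $\Motadd$ and $\Motloc$ provided by Theorem~\ref{thm:sym} across the universal property equivalences \eqref{eq:equivalenceadd} and \eqref{eq:equivalenceloc}, and then identify the tensor unit as $K$ (respectively $\bbK$) using the corepresentability equivalences \eqref{eq:corep}.

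First, I would argue that the localization functors produced in the previous section are not only symmetric monoidal but \emph{presentable} symmetric monoidal (i.e., they live in $\symstabprcat$). Indeed, $\Stab(\Pre((\idemstabcat)^{\omega}))$ is a free commutative $\ispec$-algebra generated by a small symmetric monoidal $\i$-category, so it is a presentable stable symmetric monoidal $\i$-category; the monoidal localizations at $\aE$ (Proposition~\ref{prop:compat}) and at the corresponding set for localizing invariants (Proposition~\ref{JLoc}) are again presentable symmetric monoidal, with symmetric monoidal localization functors from $\Stab(\Pre((\idemstabcat)^{\omega}))$. Thus $\Motadd$ and $\Motloc$ are objects of $\symstabprcat$.

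Next, I would invoke the self-duality of $\symstabprcat$ used in the proof of Corollary~\ref{cor:dualmon}: for any presentable stable symmetric monoidal $\i$-category $\aM$ obtained as a localization of $\Stab(\Pre(\C))$, the dual $\Fun^{\L}(\aM, \ispec)$ inherits a presentable symmetric monoidal structure, and the functor $\Map_{\aM}(\mathbf{1}_{\aM}, -) \colon \aM \to \ispec$ represents the tensor unit of $\Fun^{\L}(\aM, \ispec)$. Applying this with $\aM = \Motadd$ and $\aM = \Motloc$, I obtain symmetric monoidal structures on $\Fun^{\L}(\Motadd, \ispec)$ and $\Fun^{\L}(\Motloc, \ispec)$ whose units are $\Map(\Umot(\ispec^{\omega}), \Umot(-))$ and $\Map(\Uloc(\ispec^{\omega}), \Uloc(-))$, since by Theorem~\ref{thm:sym} the tensor units of $\Motadd$ and $\Motloc$ are $\Umot(\ispec^{\omega})$ and $\Uloc(\ispec^{\omega})$. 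By \eqref{eq:corep} these functors are precisely $K(-)$ and $\bbK(-)$.

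Finally, I would transfer these symmetric monoidal structures to $\Fun_\add(\idemstabcat, \cS)$ and $\Fun_\loc(\idemstabcat, \cS)$ using the universal property equivalences \eqref{eq:equivalenceadd} and \eqref{eq:equivalenceloc}, which are equivalences of $\i$-categories. This gives presentable symmetric monoidal $\i$-categories $\Fun_\add(\idemstabcat, \cS)^{\otimes}$ and $\Fun_\loc(\idemstabcat, \cS)^{\otimes}$ whose units are the images of $K$ and $\bbK$ under the inverse equivalences, namely $K$ and $\bbK$ themselves.

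The principal obstacle is verifying that the passage from $\Motadd$ to its dual $\Fun^{\L}(\Motadd, \ispec)$ really produces a presentable symmetric monoidal structure for which the tensor unit is the functor corepresented by the monoidal unit of $\Motadd$. In the absolute setting $\Fun^{\L}(\Stab(\Pre(\C)), \cS) \simeq \Stab(\Pre(\C^{\op}))$ this is clear by inspection, with the unit being the Yoneda embedding of the unit of $\C^{\op}$; the work is to check that passage to a monoidal Bousfield localization on the source corresponds to the appropriate restriction on the dual side, which follows from \cite[2.2.1.8]{HA} applied in the dual direction. Once this identification is in place, the identification of the tensor unit with algebraic $K$-theory is immediate from \eqref{eq:corep}.
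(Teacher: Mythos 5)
Your proposal is correct and follows essentially the same route as the paper: both rest on the duality of Corollary~\ref{cor:dualmon} (i.e., $\Fun^{\L}(\Stab(\Pre(\C)),\cS)\simeq\Stab(\Pre(\C^{\op}))$ with its Day convolution structure) combined with the monoidal compatibility of the localizations established in Propositions~\ref{prop:compat} and~\ref{JLoc}. The only real difference is the order of operations: the paper equips the ambient category $\Fun((\idemstabcat)^{\omega},\ispec)$ with the convolution symmetric monoidal structure and then localizes to the additive/localizing subcategories, whereas you dualize the localized $\i$-categories $\Motadd$, $\Motloc$ first and then transfer across \eqref{eq:equivalenceadd} and \eqref{eq:equivalenceloc}; these are equivalent. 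What you add that the paper's proof leaves implicit is the identification of the tensor unit with $K$ (respectively $\bbK$) via \eqref{eq:corep} and the corepresentability of the unit of the dual by the monoidal unit of $\aM$; this is a genuine gain in completeness, though as you note the compactly-generated duality fact (that $\Fun^{\L}(\aM,\ispec)\simeq\Ind((\aM^{\omega})^{\op})$ carries a convolution structure whose unit is $\Map_{\aM}(\mathbf{1},-)$ when $\mathbf{1}$ is compact) should be made explicit rather than merely asserted.
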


\begin{proof}
The argument for Corollary~\ref{cor:dualmon} implies that, for any
infinite regular cardinal $\kappa$, the $\i$-category of functors
$\Fun((\idemstabcat)^\kappa, \ispec)$ is a symmetric monoidal
$\i$-category (again with respect to the convolution tensor product).
It is straightforward to check (again using the criterion
of~\cite[Proposition~2.2.1.6]{HA}) that this induces symmetric
monoidal structures on the subcategories
$\Fun_\add(\idemstabcat, \ispec)$ and
$\Fun_\loc(\idemstabcat, \ispec)$.  Specifically, the analogues of the
arguments for Proposition~\ref{prop:compat} and Proposition~\ref{JLoc}
hold with $((\idemstabcat)^{\kappa})^{\op}$ in place of
$(\idemstabcat)^\kappa$.
\end{proof}

Moreover, we have the following comparison result:

\begin{theorem}\label{thm:bodymainone}
The functor $\Umot$ induces a symmetric monoidal equivalence 
\[
\Fun^\L(\Motadd, \ispec)^{\otimes} \stackrel{\simeq}{\to} \Fun_\add(\idemstabcat,
\ispec)^{\otimes}.
\]
In particular, $\Umot$ induces an equivalence of
$E_\infty$ algebras in the two symmetric monoidal $\i$-categories.
Analogously, the functor $\Uloc$ induces a symmetric monoidal equivalence  
\[
\Fun^\L(\Motloc, \ispec)^{\otimes} \stackrel{\simeq}{\to} \Fun_\loc(\idemstabcat,
\ispec)^{\otimes}\,.
\]
\end{theorem}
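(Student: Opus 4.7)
The plan is to upgrade the underlying equivalence of $\infty$-categories from~\eqref{eq:equivalenceadd} to a symmetric monoidal equivalence. Since~\eqref{eq:equivalenceadd} already establishes the underlying equivalence, the remaining work is to match the two symmetric monoidal structures: the one on $\Fun_\add(\idemstabcat, \ispec)$ from Theorem~\ref{thm:bodymaintwo}, and the one on $\Fun^\L(\Motadd, \ispec)$ from Corollary~\ref{cor:dualmon}.

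First, I would describe both monoidal structures uniformly via Day convolution. On the target, $\Fun_\add(\idemstabcat, \ispec)$ inherits its structure as a symmetric monoidal subcategory of $\Fun(\idemstabcat, \ispec)$ equipped with Day convolution induced by $\otimes^\vee$ on $\idemstabcat$. On the source, using the construction of $\Motadd$ as a symmetric monoidal localization of $\Stab(\Pre((\idemstabcat)^\omega))$ (Propositions~\ref{compactsymmon} and~\ref{prop:compat}) combined with Corollary~\ref{cor:dualmon}, the monoidal structure on $\Fun^\L(\Motadd, \ispec)$ is realized as an appropriate localization of Day convolution on $\Fun((\idemstabcat)^\omega, \ispec)$.

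Next, I would invoke the general principle that left Kan extension along a symmetric monoidal functor is strong symmetric monoidal with respect to Day convolution. Since $\Umot$ is symmetric monoidal by Theorem~\ref{thm:sym}, left Kan extension along $\Umot|_{(\idemstabcat)^\omega}$ furnishes (after passing to the appropriate local subcategories) the inverse equivalence to $(\Umot)^*$ in~\eqref{eq:equivalenceadd}, and this inverse is strong symmetric monoidal. Consequently the equivalence~\eqref{eq:equivalenceadd}, and with it $(\Umot)^*$, is symmetric monoidal. The identical argument applies to $\Uloc$ using~\eqref{eq:equivalenceloc} and Proposition~\ref{JLoc} in place of~\eqref{eq:equivalenceadd} and Proposition~\ref{prop:compat}, since the localization there is equally compatible with the tensor product.

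The main obstacle lies in the first step: carefully identifying the source symmetric monoidal structure as a localization of Day convolution in a manner compatible with the dualization procedure of Corollary~\ref{cor:dualmon}. This requires tracking the set of inverted morphisms on one side of the duality to the corresponding local objects on the other and confirming that Proposition~\ref{prop:compat} (respectively Proposition~\ref{JLoc}) supplies precisely the compatibility needed for both the existence and the strong monoidality of the resulting monoidal localization on the dual side. Once this identification is established, the symmetric monoidal upgrade of~\eqref{eq:equivalenceadd} and~\eqref{eq:equivalenceloc} becomes formal, driven by the symmetric monoidality of $\Umot$ and $\Uloc$.
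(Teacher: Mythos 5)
Your proposal is correct and follows essentially the same route as the paper: both identify the two monoidal structures via their construction from Day convolution and localization, use the symmetric monoidality of $\Umot$ (resp.\ $\Uloc$), and conclude that the underlying equivalence~\eqref{eq:equivalenceadd} (resp.~\eqref{eq:equivalenceloc}) is symmetric monoidal. The paper compresses what you call the ``main obstacle'' into the phrase ``tracing through the constructions'' and then invokes \cite[2.1.3.8]{HA} to pass directly from ``symmetric monoidal functor inducing an equivalence of underlying $\i$-categories'' to ``symmetric monoidal equivalence,'' a shortcut that lets one sidestep the explicit left-Kan-extension argument you use to exhibit the strong monoidality of the inverse.
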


\begin{proof}
Tracing through the constructions of the symmetric monoidal structures
and using the fact that $\Umot$ is symmetric monoidal, we see that it
induces a symmetric monoidal functor
\[
\Fun^\L(\Motadd, \ispec)^{\otimes} \to \Fun_\add(\idemstabcat,
\ispec)^{\otimes}. 
\]
Since we know it induces an equivalence on the underlying categories,
this implies it induces an equivalence of symmetric monoidal
$\i$-categories; see \cite[Remark~2.1.3.8]{HA}.  The argument for the
localizing case is analogous.
\end{proof}

Using recent work of Glasman~\cite{glasman}, we can rephrase the
preceding result to obtain the following relation between
$E_\infty$ algebras and lax symmetric monoidal functors.
Specifically, the main result of~\cite{glasman} establishes that there
is an equivalence 
\[
\Alg_{/
E_\infty}(\Fun(\aC, \ispec)^{\otimes}) \htp \Fun^{\lax}(\aC, \ispec),
\]
for any small $\i$-category $\aC$, where the functor category is given
the convolution symmetric monoidal structure.  Combining this
equivalence with the preceding result, we have the following:

\begin{corollary}\label{cor:laxcomp}
There are equivalences of $\i$-categories 
\[
\Alg_{/ E_{\infty}}(\Fun_{\add}(\idemstabcat, \aS_\infty)) \stackrel{\sim}{\too}
\Fun^{\lax}_{\mathrm{add}}(\idemstabcat,\aS_\infty)
\]
\[
\Alg_{/ E_{\infty}}(\Fun_{\loc}(\idemstabcat, \aS_\infty)) \stackrel{\sim}{\too}
\Fun^{\lax}_{\mathrm{loc}}(\idemstabcat,\aS_\infty)
\]
and
\[
\Alg_{/ E_\infty}(\Fun^{\L}(\Motadd, \aD)) \stackrel{\sim}{\too} \Fun^{\L,\lax}(\Motadd,\aD)) 
\]
\[
\Alg_{/ E_\infty}(\Fun^{\L}(\Motloc, \aD)) \stackrel{\sim}{\too} \Fun^{\L,\lax}(\Motloc,\aD)) 
\]
\end{corollary}

Corollary~\ref{cor:laxcomp} and Theorem~\ref{thm:bodymainone} leads to
the following comparison. 

\begin{theorem}
For any presentable symmetric monoidal $\i$-category $\aD$, there are
equivalences of $\i$-categories 
\[
(\Umot)^* \colon \Fun^{\L,\lax}(\Motadd,\aD) \stackrel{\sim}{\too}
\Fun^{\lax}_{\mathrm{add}}(\idemstabcat,\aD)
\]
\[
(\Uloc)^* \colon \Fun^{\L,\lax}(\Motloc,\aD) \stackrel{\sim}{\too}
\Fun^{\lax}_{\mathrm{loc}}(\idemstabcat,\aD)\,,
\]
where the left-hand sides denote the $\i$-category of lax symmetric
monoidal colimit-preserving functors and the right-hand sides denote
the $\i$-categories of lax symmetric monoidal additive or localizing
invariants, respectively.
\end{theorem}

\subsection*{The localizing subcategory generated by the
unit}\label{sec:localizing}

One interesting application of the symmetric monoidal structures on
$\Motadd$ and $\Motloc$ is the fact that these imply these categories
are enriched over the endomorphisms of the unit; i.e., algebraic
$K$-theory spectra of $\bbS$.  Specifically, the following result
follows from theorem~\ref{thm:sym} and from the
equivalences \eqref{eq:corep}.

\begin{corollary}\label{cor:Aenriched}
The symmetric monoidal homotopy categories $\mathrm{Ho}(\Motadd)$ and
$\mathrm{Ho}(\Motloc)$ are enriched over the homotopy category
$\mathrm{Ho}(A(*)\text{-}\Mod)$ of $A(*)$-modules.
\end{corollary}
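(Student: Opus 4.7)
The plan is to deduce the enrichment from two inputs: the $E_\infty$-ring structure on the endomorphism spectrum of the tensor unit in a symmetric monoidal stable $\infty$-category, and the computation of that endomorphism spectrum via the corepresentability equivalences \eqref{eq:corep}.

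First, Theorem~\ref{thm:sym} supplies presentable stable symmetric monoidal $\infty$-categories $\Motadd^{\otimes}$ and $\Motloc^{\otimes}$ whose tensor units are $\Umot(\ispec^{\omega})$ and $\Uloc(\ispec^{\omega})$. In any symmetric monoidal stable $\infty$-category $\aC^{\otimes}$, the mapping spectrum $\End(\mathbf{1}) = \Map_{\aC}(\mathbf{1},\mathbf{1})$ of the unit carries a canonical $E_\infty$-algebra structure (see \cite[7.3.2.7]{HA}, applied as in the proof of Proposition~\ref{prop:kmonoidal}, since the functor $\mathbf{1}\otimes(-)\colon\ispec\to\aC$ is symmetric monoidal and its lax symmetric monoidal right adjoint is $\Map_{\aC}(\mathbf{1},-)$). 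Applied to our two cases and combined with the equivalences \eqref{eq:corep}, this identifies
\[
\End(\Umot(\ispec^{\omega})) \simeq K(\ispec^{\omega}) = K(\bbS) = A(\ast)
\]
and analogously $\End(\Uloc(\ispec^{\omega})) \simeq \bbK(\bbS) \simeq A(\ast)$ as $E_\infty$-ring spectra.

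Next, for any objects $X,Y$ of $\aC^{\otimes}$, the composition pairing endows $\Map_{\aC}(X,Y)$ with the structure of an $\End(\mathbf{1})$-module via the action
\[
\End(\mathbf{1}) \sma \Map_{\aC}(X,Y) \too \Map_{\aC}(X,Y)
\]
induced by pre- (equivalently, post-)composition with elements of $\End(\mathbf{1})$, and this module structure is natural in $X$ and $Y$ and compatible with composition. Passing to homotopy categories, the homotopy class $[X,Y]$ becomes a module over $\pi_0\End(\mathbf{1})$, and more to the point the mapping spectrum functor descends to a functor valued in $\mathrm{Ho}(A(\ast)\text{-}\Mod)$. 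Compatibility of this action with the symmetric monoidal structure (which factors through the coherent $E_\infty$-structure on $\End(\mathbf{1})$) equips $\mathrm{Ho}(\Motadd)$ and $\mathrm{Ho}(\Motloc)$ with the desired $\mathrm{Ho}(A(\ast)\text{-}\Mod)$-enrichment.

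The only nontrivial ingredient is the $E_\infty$-ring structure on $\End(\mathbf{1})$ and the naturality of the module structure on mapping spectra, which is standard and already implicit in Proposition~\ref{prop:kmonoidal}; everything else is formal from Theorem~\ref{thm:sym} and the identifications \eqref{eq:corep}.
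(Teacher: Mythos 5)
Your proposal is correct and follows essentially the same approach as the paper. The paper's own proof is just one sentence citing Theorem~\ref{thm:sym}, the corepresentability equivalences~\eqref{eq:corep}, and ``the general theory of symmetric monoidal categories,'' with the action of $A(*)$ on mapping spectra spelled out in the subsequent remark via the composite $\End(\mathbf{1}) \to \End(X\otimes\mathbf{1}) \to F(X\otimes\mathbf{1},Y)$; your expansion supplies exactly those ingredients, including the $E_\infty$-structure on $\End(\mathbf{1})$ via the lax symmetric monoidal right adjoint as in Proposition~\ref{prop:kmonoidal}.
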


In particular, if $\aA$ and $\aB$ are small stable $\i$-categories,
then the mapping spectrum $\Map(\Umot(\aB), \Umot(\aA))$ is a module
over  
\[
\Map(\Umot(\ispec^{\omega}), \Umot(\ispec^{\omega})) \htp K(\bbS) =
A(*).
\]
Similarly, the mapping spectrum $\Map(\Uloc(\aB), \Uloc(\aA))$ is a
module over
\[
\Map(\Uloc(\ispec^{\omega}), \Uloc(\ispec^{\omega})) \htp \bbK(\bbS) \htp A(*).
\]

\begin{remark}
In fact, it is possible to promote the enrichments of
$\mathrm{Ho}(\Motadd)$ and $\mathrm{Ho}(\Motloc)$ over
$\mathrm{Ho}(A(*)\text{-}\Mod)$ to enrichments of $\Motadd$ and
$\Motloc$ over the $\infty$-category $A(*)\text{-}\Mod$ of
$A(*)$-modules. This can be done directly, using the formalism of
$\infty$-operads as in~\cite[Definition~4.2.1.28]{HA}.  Briefly, the
action of $A(*)$ on the mapping spectra is given as follows.  The
endomorphism spectrum $\End({\bf 1})$ of the unit is an $A_\i$ ring
spectrum (even $E_\i$).  For objects $X$ and $Y$, the mapping spectrum
$F(X \otimes {\bf 1}, Y)$ is equivalent to $F(X,Y)$ and has an action
of $\End({\bf 1})$ via the composite map
\[
\End({\bf 1}) \to \End(X\otimes {\bf 1})\to F(X\otimes {\bf 1},Y).
\]
\end{remark}

In any symmetric monoidal stable $\i$-category $\aC$, we can consider
the smallest stable subcategory $\Loc_{\aC}({\bf 1})$ generated by the
unit object ${\bf 1}$ which is closed under (not necessarily finite)
direct sum; this is a lift of the localizing subcategory of the
homotopy category generated by the image of the unit.  If $\aC$ is
generated by the unit, then this subcategory is actually all of $\aC$;
in general, it is smaller.

Let $F_{\aC}(-,-)$ denote the mapping spectrum in $\aC$.  The 
endomorphism spectrum $\End_{\aC}({\bf 1}) = F_{\aC}({\bf 1},{\bf 1})$
is a commutative ring spectrum, and there is a functor 
\[
F_{\aC}({\bf 1},-)\colon \aC \too \End_{\aC}({\bf 1})\text{-}\mathrm{Mod}\,.
\]
When ${\bf 1}$ is a compact object in $\aC$, this functor induces an
equivalence between the category of modules over $\End_{\aC}({\bf 1})$
and $\Loc_{\aC}({\bf 1})$ (see \cite{DwyerGreenleesIyengar} for a nice
discussion of this kind of ``generalized Morita theory'').  Moreover,
there is an induced equivalence 
\[
F_{\aC}(X,Y) \stackrel{\sim}{\too} F_{\End_{\aC}({\bf
1})}(F_{\aC}({\bf 1},X),F_{\aC}({\bf 1},Y)).
\]
for every $X\in \Loc_{\aC}({\bf 1})$ and $Y \in \aC$.  Once consequence of
this is the following $\Ext$ spectral sequence (e.g., see \cite[4.1]{EKMM}):

\begin{corollary}
Given objects $X_1$ and $X_2$ in $\Loc_{\aC}({\bf 1})$, we have a
convergent spectral sequence 
\begin{align*}
E_2^{p,q}=\mathrm{Ext}^{p,q}_{\pi_{-*} \End_{\aC}({\bf 1})}\big(\pi_{-*} F_{\aC}({\bf
1},X_1), \pi_{-*} F_{\aC}({\bf 1},X_2) \big) \Rightarrow \\
\pi_{-p-q} F_{\End_{\aC}({\bf 1})}(F_{\aC}({\bf 1}, X_1), F_{\aC}({\bf 1}, X_2))
\end{align*}
and we can interpret both the $E_2$ term and the target in terms of
maps in $\aC$.
\end{corollary}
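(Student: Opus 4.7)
The plan is to deduce the spectral sequence from the standard Ext (universal coefficient) spectral sequence for modules over an $E_\infty$-ring spectrum, applied to $R = \End_{\aC}(\mathbf{1})$ and the modules $M_i = F_{\aC}(\mathbf{1},X_i)$, and then translate each piece back to $\aC$ by means of the equivalence $F_{\aC}(X_1,X_2)\simeq F_R(M_1,M_2)$ already established in the preceding paragraphs.

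First, I would set up the algebraic input. Because $\aC$ is a symmetric monoidal stable $\i$-category, $R=\End_{\aC}(\mathbf{1})$ carries a canonical $E_\infty$-ring structure (the unit is, in a unique way, a commutative algebra for the monoidal structure), and for every object $X$ of $\aC$ the spectrum $F_{\aC}(\mathbf{1},X)$ is naturally an $R$-module, functorially in $X$. Thus $M_1$ and $M_2$ are honest $R$-modules.

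Next, I would invoke the standard Ext spectral sequence for $R$-modules, as in \cite[4.1]{EKMM}: one picks a cellular resolution $M_1 \simeq |P_\bullet|$ by free $R$-modules and applies $F_R(-,M_2)$, obtaining a tower of fibrations. The associated spectral sequence has
\[
E_2^{p,q}=\mathrm{Ext}^{p,q}_{\pi_{-*}R}\bigl(\pi_{-*}M_1,\pi_{-*}M_2\bigr)
\Longrightarrow \pi_{-p-q}F_R(M_1,M_2),
\]
which gives precisely the spectral sequence of the statement, with the abutment written in terms of modules rather than of $\aC$.

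Finally, I would interpret both sides in $\aC$. The target is rewritten using the equivalence
\[
F_R(F_{\aC}(\mathbf{1},X_1),F_{\aC}(\mathbf{1},X_2))\simeq F_{\aC}(X_1,X_2),
\]
valid for $X_1\in\Loc_{\aC}(\mathbf{1})$ (and $X_2\in\aC$) by the discussion immediately preceding the corollary. The $E_2$-page is already intrinsic to $\aC$ since $\pi_{-*}R=[\Sigma^*\mathbf{1},\mathbf{1}]_{\aC}$ is a graded-commutative ring and $\pi_{-*}M_i=[\Sigma^*\mathbf{1},X_i]_{\aC}$ is a graded module over it. The main obstacle is convergence: since $R$ need not be connective, one must check that the filtration of the tower converges strongly, but this is precisely what is arranged by the cellular construction of \cite[4.1]{EKMM} via a standard Milnor $\lim^1$ argument, and nothing further is required here.
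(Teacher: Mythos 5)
Your proposal takes essentially the same approach as the paper, which offers no separate argument for this corollary: it simply cites the universal-coefficient/$\Ext$ spectral sequence of \cite[4.1]{EKMM} applied to $R=\End_{\aC}(\mathbf{1})$ and the $R$-modules $F_{\aC}(\mathbf{1},X_i)$, together with the equivalence $F_{\aC}(X,Y)\simeq F_R(F_{\aC}(\mathbf{1},X),F_{\aC}(\mathbf{1},Y))$ from the preceding paragraph to interpret both sides in $\aC$. One small caution: the ``Milnor $\lim^1$'' remark gives conditional rather than strong convergence in the non-connective case, so the word ``convergent'' is carrying some implicit hypotheses --- but the paper is equally silent on this, so your level of rigor matches.
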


Since $\bbK(\bbS) \htp K(\bbS) = A(*)$, in our setting these localizing
subcategories can be identified with the $\i$-category of $A(*)$-modules.

\section{$THH$ and $TC$ as multiplicative theories}\label{sec:thhmult} 

In this section, we discuss a model for $THH$ which is a
multiplicative localizing invariant, i.e., an object of the
$\i$-category $\Alg_{/ E_\infty}(\Fun^\L(\Motloc, \ispec)^{\otimes})$.
We also explain how to interpret the construction of $TC$ in this
context; $TC$ itself is not an additive invariant as it does not
preserve filtered colimits, but constituents of $TC$ do yield additive
invariants.

More generally, we explain the passage from lax symmetric
monoidal functors from small spectral categories to spectra, such that the lax comparison maps are weak equivalences of spectra, to objects in
$\Alg_{/ E_\infty}(\Fun((\idemstabcat)^{\omega}, \ispec)^{\otimes})$;
see Theorem \ref{thm:laxdescends}.  If, in addition, the underlying functor is additive or localizing, then we shall see
Section~\ref{sec:unique} that these lax symmetric monoidal spectral-valued functors will admit unique multiplicative natural
transformations of additive functors from $K$ or localizing functors from $\bbK$.
We will be especially interested in the case of $THH$ and $TC$, which we will show admit concrete models as lax symmetric monoidal functors from small spectral categories to spectra.

We begin by producing a particular model of the convolution product on
the $\i$-category $\Fun((\idemstabcat)^{\omega}, \ispec)$.  Let
$\Spcat^{\cpt}$ denote a small full subcategory of $\Spcat$ which
contains a representative of each weak equivalence class of compact
objects, consists of pointwise-cofibrant objects, is closed under
smash product, and is closed under the functorial cofibrant-fibrant
replacement.  Let $\sPre((\Spcat^{\cpt})^{\op})$ denote the category
of presheaves of symmetric spectra (a.k.a. spectral presheaves) on
$(\Spcat^{\cpt})^{\op}$.  This has a symmetric monoidal product given
by the Day convolution.

\begin{proposition}\label{prop:mult}
With respect to the projective model structure and the convolution
symmetric monoidal structure, $\sPre((\Spcat^{\cpt})^{\op})$ is a
symmetric monoidal simplicial model category, as is the Bousfield
localization of $\sPre((\Spcat^{\cpt})^{\op})$ at the Morita
equivalences in $(\Spcat^{\cpt})^{\op}$, which we denote by
$\sPre^\mathrm{Mor}((\Spcat^{\cpt})^{\op})$.  Moreover, the symmetric
monoidal simplicial model structure on
$\sPre^\mathrm{Mor}((\Spcat^{\cpt})^{\op})$ induces an equivalence of
symmetric monoidal $\i$-categories
\[
(\N(\sPre^\mathrm{Mor}((\Spcat^{\cpt})^{\op}))[\aW^{-1}])^{\otimes} \htp
\Fun((\idemstabcat)^{\omega},\ispec)^{\otimes}.
\]  
\end{proposition}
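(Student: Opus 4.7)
The plan is to prove the three assertions of the proposition in sequence.

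For the first assertion, I would verify the pushout-product and unit axioms for the Day convolution on the projective model structure on $\sPre((\Spcat^{\cpt})^{\op})$. The Day convolution of two representables is the representable on the smash product, and $\Spcat^{\cpt}$ is by construction closed under the smash product of pointwise-cofibrant spectral categories (using Proposition~\ref{prop:flatprops}(2)). Consequently the pushout-product axiom reduces on generating cofibrations to the statement that smashing a generating cofibration of spectra with the morphism spectra of a pointwise-cofibrant spectral category preserves cofibrations and acyclic cofibrations, which follows from Proposition~\ref{prop:flatprops}(3). The unit axiom is immediate since the representable on the unit spectral category is cofibrant.

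For the second assertion, I would verify that the Bousfield localization at the Morita equivalences is compatible with the Day convolution monoidal structure. By the standard criterion for monoidal Bousfield localizations, it suffices to show that for any Morita-local equivalence $f \colon X \to Y$ and any object $Z$, the map $f \sma Z \to Y \sma Z$ is a Morita-local equivalence. Writing $Z$ as a colimit of representables and using that Day convolution preserves colimits in each variable, one reduces to the case where $Z$ is the representable on a pointwise-cofibrant spectral category, which is precisely Proposition~\ref{prop:flatprops}(3).

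For the third assertion, I would first observe that the underlying $\i$-category of $\sPre^\mathrm{Mor}((\Spcat^{\cpt})^{\op})$ is equivalent to $\Fun(\N(\Spcat^{\cpt})[\aW^{-1}], \ispec)$, since a spectral presheaf is Morita-local precisely when it carries Morita equivalences to equivalences. By Theorem~\ref{thm:multMor}, together with Proposition~\ref{compactsymmon} which identifies the compact objects as a full symmetric monoidal subcategory, $\N(\Spcat^{\cpt})[\aW^{-1}]$ is equivalent to $(\idemstabcat)^{\omega}$ as a symmetric monoidal $\i$-category. The symmetric monoidal structure on both sides is then the Day convolution extending the base monoidal structure, which by its universal property (it extends the base structure and commutes with colimits variable-by-variable) is unique. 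Hence the two are equivalent as symmetric monoidal $\i$-categories.

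The main obstacle is the compatibility of the Bousfield localization with the monoidal structure. This is precisely where the careful point-set technology of pointwise-cofibrant spectral categories developed in Proposition~\ref{prop:flatprops} plays its essential role; without the flatness statement in item (3) one could not hope to Morita-localize in a symmetric-monoidally coherent way.
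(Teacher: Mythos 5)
Your proposal is correct and takes essentially the same route as the paper: the second assertion is established via the Cisinski--Tabuada criterion for monoidal Bousfield localization together with Proposition~\ref{prop:flatprops}(3) (flatness under smash with pointwise-cofibrant categories), and the third assertion combines the multiplicative Morita equivalence of Theorem~\ref{thm:symcomp} with the universal property of Day convolution. The paper treats the first assertion as standard and phrases the underlying-category equivalence via $\i$-categorical Yoneda rather than your ``Morita-local presheaves invert Morita equivalences'' formulation, but these are minor presentational differences rather than different arguments.
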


\begin{proof}
The existence of the symmetric monoidal projective model structure on
$\sPre((\Spcat^{\cpt})^{\op}$ is straightforward (e.g.,
see~\cite[4.2]{DundasRoendigsOstvaer}).  The criterion
of \cite[5.8]{CisTab} and the fact that the (opposite of the) Morita
equivalences on $\Spcat^{\cpt}$ are closed under the smash product
imply that $\sPre^\mathrm{Mor}((\Spcat^{\cpt})^{\op})$ is a
symmetric monoidal model category.  The fact that the $\i$-categorical
Yoneda embedding is fully-faithful \cite[5.1.3.1]{HTT} implies that
there is an equivalence
\[
\N(\sPre^\mathrm{Mor}((\Spcat^{\cpt})^{\op}))[\aW^{-1}] \htp
\Fun((\idemstabcat)^{\omega},\ispec),
\]
and the universal property of the $\i$-categorical Day
convolution (see \cite[Corollary 6.3.1.12]{HA}) promotes the equivalence of
theorem~\ref{thm:symcomp} to an equivalence of symmetric monoidal
$\i$-categories
\[
(\N(\sPre^\mathrm{Mor}((\Spcat^{\cpt})^{\op}))[\aW^{-1}])^{\otimes} \htp
\Fun((\idemstabcat)^{\omega},\ispec)^{\otimes}.
\]  
\end{proof}

Although we do not know that the model structure on
$\sPre^\mathrm{Mor}((\Spcat^{\cpt})^{\op})$ satisfies the monoid
axiom, nonetheless the results of~\cite[\S 4]{Spitzweck} produce a
$J$-semi model category structure on the category of algebras over a
cofibrant $E_\infty$ operad $\aO$.  Such a structure suffices to
maintain homotopical control over the underlying $\infty$-category.
In particular, cofibrant-fibrant objects in the J-semi model category
of $\aO$-algebras forget to cofibrant-fibrant objects in the
underlying category.

Next, recall the following comparison \cite[22.1]{mmss}:

\begin{proposition}\label{mmss}
The category of commutative algebras in the presheaf category
$\sPre((\Spcat^{\cpt})^{\op})$ equipped with the (symmetric monoidal)
convolution product is equivalent to the category of lax symmetric
monoidal functors $\Spcat^{\cpt} \to \aS$.
\end{proposition}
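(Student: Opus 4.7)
The plan is to exhibit an equivalence by unpacking the universal property of the Day convolution product. Recall that the Day convolution on $\sPre((\Spcat^{\cpt})^{\op})$ is defined by the coend
\[
(F \otimes_{\mathrm{Day}} G)(Z) = \int^{X,Y\in\Spcat^{\cpt}} F(X) \wedge G(Y) \wedge \Spcat^{\cpt}(X \wedge Y, Z)_+,
\]
so by the co-Yoneda lemma a map $F \otimes_{\mathrm{Day}} G \to H$ of presheaves is determined by, and in fact corresponds bijectively to, a family of maps $F(X) \wedge G(Y) \to H(X \wedge Y)$ that is natural in both variables. This is precisely the adjunction characterizing the Day convolution as the left Kan extension of the external product along the monoidal product of $\Spcat^{\cpt}$.

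Given this, I would construct functors in both directions. From a commutative algebra $(F, \mu\colon F \otimes_{\mathrm{Day}} F \to F, \eta\colon \mathbf{1} \to F)$, the above bijection produces maps $F(X) \wedge F(Y) \to F(X \wedge Y)$ and a unit $\bS \to F(e)$ (where $e$ is the monoidal unit of $\Spcat^{\cpt}$, so $\mathbf{1}$ is the presheaf represented by $e$). Conversely, given a lax symmetric monoidal structure on $F\colon \Spcat^{\cpt} \to \aS$, the same bijection transports the structure maps to a multiplication and unit on $F$ viewed as a presheaf. I would then verify that associativity, unitality, and commutativity of the Day convolution product translate exactly to the corresponding diagrams for a lax symmetric monoidal functor; this follows from the naturality of the coend identification and the fact that the symmetry of the Day convolution is induced from the symmetry of $\wedge$ on spectra together with the symmetry of the monoidal product $\wedge$ on $\Spcat^{\cpt}$.

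Finally, I would argue that these two constructions are inverse to each other: both are expressed via the same adjunction, so they are literally inverse bijections on objects, and an analogous argument on morphisms shows they extend to an equivalence of categories, where morphisms of commutative algebras correspond to monoidal natural transformations of lax symmetric monoidal functors.

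The main technical point, rather than an obstacle, is keeping track of the coherences: one must check that the monoid diagrams for $\mu$ and the lax symmetric monoidal coherence diagrams for the structure maps $F(X) \wedge F(Y) \to F(X \wedge Y)$ correspond under the adjunction. This is essentially formal once one has the correct enriched/symmetric monoidal version of the universal property of Day convolution, and indeed this is precisely the content of \cite[22.1]{mmss}, to which we appeal.
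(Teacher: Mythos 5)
Your proposal is correct and essentially matches the paper's treatment: the paper offers no argument of its own for this statement, giving only the reference to \cite[22.1]{mmss}, while you supply the standard proof sketch (Day convolution via coend, co-Yoneda to translate algebra structure maps into lax monoidal structure maps, and the resulting equivalence) before deferring to the same reference for the coherence check. One small point worth noting: it is important here that $\Spcat^{\cpt}$ is a symmetric monoidal \emph{simplicial} category (with simplicial-set-valued mapping objects, morphisms being spectral functors), so that $\Spcat^{\cpt}(X\wedge Y, Z)_+$ makes sense as a pointed simplicial set to be smashed against; your formula implicitly uses the suspension spectrum of this pointed object, which is the right thing to write in the spectral presheaf category, but you should say so explicitly if this were to be expanded into a full proof.
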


We now use Proposition~\ref{cor:opeinf} and the J-semi model structure on
$E_\infty$ algebras in $\sPre^\mathrm{Mor}((\Spcat^{\cpt})^{\op})$ 
to deduce the following theorem.

\begin{theorem}\label{thm:laxdescends}
Let $E$ be a lax symmetric monoidal functor from spectral categories
to spectra.  Assume that $E$ preserves Morita equivalences of flat spectral
categories and that the induced functor
$\tilde{E} \colon \idemstabcat \to \ispec$ is an additive invariant.
Then $\tilde{E}$ naturally extends to an $E_\i$ algebra object of $\Fun_\add(\idemstabcat, \ispec)^{\otimes}$.  The analogous results for localizing invariants hold.
\end{theorem}

\begin{proof}
By proposition~\ref{mmss}, $E$, when restricted to $\Spcat^{\cpt}$,
yields a commutative algebra in $\sPre((\Spcat^{\cpt})^{\op})$.
Fixing a cofibrant-fibrant $E_\infty$-operad $\aO$, we can restrict
along the map from $\aO$ to the terminal operad to produce an
$\aO$-algebra structure on $E$.  After cofibrant-fibrant replacement
in the J-semi model structure on $\aO$-algebras in
$\sPre^\mathrm{Mor}((\Spcat^{\cpt})^{\op})$,
Proposition~\ref{cor:opeinf} now implies that $E$ descends to an
$E_\infty$ algebra in the symmetric monoidal $\i$-category
$\Fun((\idemstabcat)^{\omega},\ispec)$ such that the underlying
functor agrees with $\tilde{E}$ up to equivalence.

According to the results of
Section~\ref{sec:mon}, the additive localization is symmetric
monoidal, so we obtain a symmetric monoidal functor
\[
\Fun((\idemstabcat)^\omega,\ispec)^\otimes\too\Fun_\add((\idemstabcat)^\omega,\ispec)^\otimes\simeq\Fun^\L(\Motadd,\ispec)^\otimes. 
\]
Since $\tilde{E}$ is already an additive invariant, it comes from an
$E_\infty$ algebra in the symmetric monoidal $\i$-category
$\Fun_\add((\idemstabcat)^\omega,\ispec)^\otimes$ of additive functors
from compact idempotent-complete small stable $\i$-categories to
spectra.  But this latter symmetric monoidal $\i$-category is
(symmetric monoidal) equivalent to the symmetric monoidal
$\i$-category $\Fun^\L(\Motadd,\ispec)^\otimes$ by
proposition~\ref{thm:bodymainone}, so we may also regard $\tilde{E}$ as an
$E_\infty$ algebra here.
\end{proof}

\begin{remark}
Equivalently, Corollary~\ref{cor:laxcomp} implies that a functor
satisfying the conditions of the preceding theorem gives rise to a lax
symmetric monoidal additive functor of $\i$-categories.
\end{remark}

We now apply this work in the context of the topological Hochschild
homology of spectral categories, which can be constructed using a
version of the Hochschild-Mitchell cyclic nerve \cite[3.1]{BM}. 

\begin{definition}\label{def:cyclicnerve}
For a small spectral category $\aC$ let
\[
THH_{q}(\aC)=\bigvee \aC(c_{q-1},c_{q}) \sma \dotsb \sma
\aC(c_{0},c_{1}) \sma \aC(c_{q},c_{0}),
\]
where the sum is over the $(q+1)$-tuples $(c_{0},\dotsc,c_{q})$ of
objects of $\aC$.  This becomes a simplicial object using the usual
cyclic bar construction face and degeneracy maps.  We will write
$THH(\aC)$ for the geometric realization.
\end{definition}

We do not expect this construction to have the correct homotopy type
unless the spectral category $\aC$ is pointwise-cofibrant.

\begin{lemma}\label{lem:THHdesc}
The point-set functor $THH$ from Definition~\ref{def:cyclicnerve}
descends to a functor of $\i$-categories
\[
THH \colon \idemstabcat \to \ispec.
\] 
\end{lemma}

\begin{proof}
Restricting to pointwise-cofibrant spectral categories, the result
follows from the fact that $THH$ takes Morita equivalences of spectral
categories to weak equivalences~\cite[5.9, 5.12]{BM}. 
\end{proof}

\begin{remark}
We can construct $THH$ directly on the level of $\i$-categories as
follows.  (This perspective is closely related to the view taken
in \cite{BFN}, for instance).  A small stable $\i$-category $\aC$
determines an exact functor 
\[
\Map_\C \colon \C^{\op}\otimes\C\too\ispec
\]
given by the morphism spectra in $\C$ (cf. \cite[\S 3]{BGT}).
Extending by (filtered) colimits results in a colimit preserving
functor 
\[
\Theta_\C \colon \Fun^{\ex}(\C^{\op}\otimes\C,\ispec)\simeq\Fun^{\ex}(\C\otimes\C^{\op},\ispec)\simeq\Ind(\C^{\op}\otimes\C)\too\ispec
\]
(note that we must use the canonical equivalence
$\C^{\op}\otimes\C\simeq\C\otimes\C^{\op}$ in order to obtain the
first map in the above composite).  One can check that the value of
$\Theta_\C$ on $\Map_\C\in\Fun^{\ex}(\C^{\op}\otimes\C,\ispec)$ is
precisely the spectrum $THH(\C)$.  (See~\cite[5.2.3]{Toenswisk}
for a discussion of this in the context of DG categories.)
\end{remark}

For spectral categories $\aC_1, \aC_2, \ldots, \aC_n$, the standard
shuffle product maps induce $\Sigma_n$-equivariant equivalences  
\[
THH(\aC_1) \sma THH(\aC_2) \sma \cdots \sma THH(\aC_n) \to THH(\aC_1
\sma \aC_2 \sma \cdots \sma \aC_n).
\]
These maps are associative and unital, and therefore we deduce the
following lemma.

\begin{lemma}
The shuffle product maps make $THH$ a lax symmetric monoidal functor
from spectral categories to spectra.  
\end{lemma}

Applying theorem~\ref{thm:laxdescends} we obtain the following
corollary: 

\begin{corollary}\label{cor:thheinf}
The functor $THH$ yields an object of $\Alg_{/
E_\infty}(\Fun^\L(\Motloc, \ispec)^{\otimes})$ or equivalently an
object of $\Fun^{\lax}_{\mathrm{loc}}(\idemstabcat,\aS_\infty)$.
\end{corollary}

\begin{proof}
This follows from Lemma~\ref{lem:THHdesc}, the discussion above and
the fact that $THH$ descends to a localizing invariant: $THH$
preserves filtered homotopy colimits and sends exact sequences 
of small stable $\i$-categories to cofiber sequences of
spectra~\cite[7.1]{BM}.
\end{proof}

\begin{remark}
Since the shuffle product maps are equivalences,
Proposition~\ref{cor:indufunc} implies that $THH$ in fact
descends to a symmetric monoidal functor $\idemstabcat \to \ispec$.
\end{remark}

Establishing the analogue of Corollary~\ref{cor:thheinf} for
topological cyclic homology ($TC$) is somewhat more complicated.  For
one thing, $TC$ does not preserve filtered homotopy colimits of
spectral categories, and so cannot give rise to an additive or
localizing invariant.  Moreover, the model of $THH$ given in 
Definition~\ref{def:cyclicnerve} is not the construction that is used
to build $TC$, and so a different construction (and argument) is
needed to see that $TC$ is lax symmetric monoidal. 

We now review a different model of $THH$ of a spectral category which
is adapted to the construction of $TC$ and related invariants which do
preserve filtered homotopy colimits.  Our treatment is relatively
rapid; we refer the interested reader to~\cite[\S 5]{BM2} and \cite[\S
4]{BM} for a more detailed discussion.

The key observation that leads to the construction of $TC$ is the fact
that the cyclic bar construction of Definition~\ref{def:cyclicnerve}
is not just a simplicial set, but in fact a cyclic set; the cyclic
operator is given by ``rotating'' the smash factors.  As a
consequence, the geometric realization is a spectrum with an
$S^1$-action.  In fact, $THH$ can be constructed as an
$S^1$-equivariant spectrum equipped with additional structure that
models the structure of the free loop space.  Specifically, the $p$-th
root self-equivalences $S^1 / H \cong S^1$ (for finite $H \subset S^1$)
induces equivariant weak equivalences 
\[
\Map(S^1, X)^H \to \Map(S^1, X)
\]
for reasonable spaces $X$.  We are going to give a model of $THH$ as a
cyclotomic spectrum, which is a spectrum-level version of this
structure.  Roughly speaking, a cyclotomic spectrum is equipped with
compatible maps
\[
\Phi^H THH(\aC) \to THH(\aC)
\]
for finite $H \subset S^1$, where $\Phi^H$ denotes the geometric
fixed points of the $S^1$-spectrum.

We now fix a prime $p$ and consider the subgroups $H = C_{p^k}$ as $k$
varies.  The structure of a cyclotomic spectrum supplies the system of
``categorical'' fixed points $\{THH(\aC)^{C_{p^{n-1}}}\}$ with a pair
of maps 
\[
F,R\colon THH(\aC)^{C_{p^n}} \to THH(\aC)^{C_{p^{n-1}}},
\]
where $F$ (the Frobenius) denotes the obvious inclusion of fixed
points and $R$ (the restriction) is a much less obvious map coming
from the cyclotomic structure.

For convenience, we denote the fixed points to be
\[
TR^n(\aC) = THH(\aC)^{C_{p^{n-1}}},
\]
the categorical fixed points with respect to the induced $C_{p^{n-1}}$
action.  We then define $TC^{n}(\aC)$ to be the homotopy equalizer 
\[
\holim_{F,R} TR^n(\aC) \to TR^{n-1}(\aC).\]
and we finally define
\[
TC(\aC) = \holim_n TC^n (\aC),
\]
where we form the homotopy limit over the maps induced by the
restriction $R$; this definition is equivalent to the one originally
given in~\cite{BokstedtHsiangMadsen}.

The issue that arises with Definition~\ref{def:cyclicnerve} is that
traditionally it has been difficult to obtain the correct equivariant
homotopy type directly from the cyclic nerve and in particular build
the restriction maps.  Instead, we need to use a variant of
B{\"o}kstedt's original construction, which we now review.  Let $\aI$
denote the category of finite ordered sets, with objects $n
= \{1,\ldots,n\}$ and morphisms the injections.  For a symmetric
spectrum $X$, we will denote by $X_n$ the $n$th space.

\begin{definition}
Let $\aC$ be a small spectral category and $X$ a space.  Define the
functor $\aG(C,X)_{n_0, \ldots, n_q}$ from $\aI^{q+1}$ to spaces by
the formula 
\[
\aG(C,X)_{n_0, \ldots n_q} = \Omega^{n_0 + \ldots n_1 + \ldots +
n_q} \left( \bigvee \aC(c_{q-1}, c_q)_{n_q} \sma \ldots \sma \aC(c_0,
c_1)_{n_1} \sma \aC(c_q, c_0)_{n_0} \right)
\]
and define
\[
THH(\aC; X)_q = \hocolim_{\bar{n} \in \aI^{q+q}} \aG(C,X)_{\bar{n}}.
\]
\end{definition}

For fixed $X$, the spaces $THH(\aC; X)_q$ assemble into a cyclic space
with degeneracy map induced by the unit and face maps induced by the
composition in $\aC$.  This assignment is functorial in $X$, and so
restricting to spheres 
\[
S^n \cong S^1 \sma S^1 \sma \ldots \sma S^1
\]
defines a symmetric spectrum $THH(\aC) = \{THH(\aC;S^n)\}$.  When
$\aC$ is a point-wise cofibrant spectral category, an elaboration of
the work of Shipley~\cite{ShipleyTHH} shows that there is a natural
isomorphism in the stable category between this model of $THH$ and the
cyclic nerve of Definition~\ref{def:cyclicnerve}~\cite[3.5]{BM}.

Now we fix a complete $S^1$-universe $U$ (i.e., an infinite-dimensional
real $S^1$-inner product space that contains each irreducible
finite-dimensional representation infinitely many times).  Evaluating
at the representation spheres $S^V$, where $V$ is a
finite-dimensional real $S^1$-inner product space, the collection
$\{THH(\aC; S^V)\}$ forms an orthogonal $S^1$-spectrum.  Here each
space is given the diagonal $S^1$-action from the cyclic structure as
well as the action of $S^1$ on $S^V$.  

This realization of $THH(\aC)$ as an orthogonal $S^1$-spectrum is a
cyclotomic spectrum~\cite[\S 4]{BM}, and therefore can be used to
define $TC^n$ and $TC$.  It is possible to show that $THH(-)$ is a lax
symmetric monoidal functor regarded as a functor to cyclotomic spectra
in orthogonal spectra.  However, herein we employ a shortcut due to
Hesselholt-Madsen in order to obtain the multiplicative properties we
want.  Specifically, one can write down directly the restriction and
Frobenius maps as maps of non-equivariant symmetric spectra.

To capture the monoidal properties of these maps, we use a
generalization of B{\"o}kstedt's original construction of
$THH$~\cite[\S 1.7]{HesselholtMadsenwitt}.  Let $P$ be a finite
ordered set.  Define 
\[
\aG(C,X)_{n_0, \ldots, n_q}^P \colon (\aI^{P})^{q+1} \to \aT
\]
to be the functor determined by composing $\aG(C,X)$ with the functor
$\cup_{P} \colon \aI^{P} \to \aI$.  This construction is functorial in 
both $X$ with respect to continuous maps and $\aP$ with respect to
injective maps.  For fixed $P$, as above we can define an orthogonal
spectrum $THH^P(\aC)$.  B{\"o}kstedt's lemma about ``good indexing
categories'' implies that $THH^P(\aC)$ is equivalent to
$THH(\aC)$~\cite[1.6]{Bokstedt}.
Moreover, the proof of~\cite[1.7.1]{HesselholtMadsenwitt} extended to
spectral categories as in~\cite[\S 6.2]{GeisserHesselholt} gives rise
to an $S^1 \times \Sigma_m \times \Sigma_n$-equivariant multiplication
\begin{equation}\label{eq:boklax}
THH^P(\aC; X) \sma THH^Q(\aD; Y) \to THH^{P \coprod Q}(\aC \sma \aD;
X \sma Y), 
\end{equation}
where $|P| = m$ and $|Q| = n$.

Considering the collection of spaces $\{THH^P(\aC; S^n)\}$, where $|P|
= n$, we have a symmetric spectrum which is again equivalent to
$THH(\aC)$.  The multiplication map in equation~\eqref{eq:boklax} is
associative and unital, and so we have the following result:

\begin{lemma}
The construction $\{THH^P(\aC; S^n)\}$ gives rise to a lax symmetric
monoidal functor from spectral categories to symmetric spectra.  
\end{lemma}

Moreover, one can directly construct the restriction and Frobenius
maps $R_r, F_r$ on the fixed points of this model.  Furthermore, 
these maps are compatible with the product
structure~\cite[1.7.1]{HesselholtMadsenwitt}.  Therefore, we have the
following extension of~\cite[3.6]{HesselholtMadsenwitt}. 

\begin{theorem}
There are lax symmetric monoidal functors $TC^n$ and $TC$ from spectral
categories to symmetric spectra.
\end{theorem}

Although $TC$ does not preserve filtered colimits and therefore cannot
be a localizing invariant, using~\cite[10.8]{BGT} we do have the following
analogue of Corollary~\ref{cor:thheinf}.

\begin{corollary}\label{cor:tceinf}
For each $n$, the functor $TC^n$ from spectral categories to spectra
yields an object of $\Alg_{/
E_\infty}(\Fun^\L(\Motloc, \ispec)^{\otimes})$ or equivalently
$\Fun^{\lax}_{\mathrm{loc}}(\idemstabcat,\aS_\infty)$.
\end{corollary}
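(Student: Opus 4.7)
The plan is to reduce the statement to an application of Theorem~\ref{thm:laxdescends}, in direct parallel with the proof of Corollary~\ref{cor:thheinf}. For this I need to exhibit, for each fixed $n$, the functor $TC_n$ as a point-set lax symmetric monoidal functor from spectral categories to spectra which (i) preserves Morita equivalences between pointwise-cofibrant spectral categories, and (ii) descends to a localizing invariant on $\idemstabcat$.

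First I would recall the construction of $TC_n(\aC)$ as a finite homotopy limit assembled from the genuine fixed-point spectra $THH(\aC)^{C_{p^k}}$ for $k\leq n$, with structure maps given by the restriction $R$ and Frobenius $F$ arising from the cyclotomic structure on $THH(\aC)$. Because only finitely many fixed-point spectra and finitely many structure maps are involved, the homotopy limit commutes with filtered colimits, with cofiber sequences along exact sequences, and with Morita equivalences; these properties all descend from the corresponding statements for $THH$ established in Corollary~\ref{cor:thheinf} and in~\cite[\S 10]{BGT}. This gives (i) and the localizing invariant half of (ii).

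For the lax symmetric monoidal structure I would invoke \cite[\S 6]{GeisserHesselholt}. The shuffle product maps $THH(\aC_1)\sma\dotsb\sma THH(\aC_r)\to THH(\aC_1\sma\dotsb\sma\aC_r)$ constructed above are equivariant with respect to the diagonal circle action, and crucially they are compatible with the maps $R$ and $F$ of the cyclotomic structure. Consequently, after passing to genuine fixed points $(-)^{C_{p^k}}$ and assembling the resulting lax symmetric monoidal structures into the finite diagram that defines $TC_n$, the shuffle product descends to a lax symmetric monoidal structure on $TC_n$ viewed as a functor from spectral categories to spectra. Taking a finite homotopy limit of lax symmetric monoidal functors is lax symmetric monoidal because the smash product of spectra preserves finite limits in each variable up to coherent weak equivalence in a suitable point-set model.

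With these three ingredients in hand, Theorem~\ref{thm:laxdescends} immediately produces an $E_\infty$-algebra refinement of the induced functor $\widetilde{TC_n}\colon\idemstabcat\to\ispec$ in $\Fun_{\loc}(\idemstabcat,\ispec)^{\otimes}$, and Theorem~\ref{thm:bodymainone} then identifies this symmetric monoidal $\i$-category with $\Fun^{\L}(\Motloc,\ispec)^{\otimes}$, giving the claim. The principal obstacle is the lax symmetric monoidal structure on $TC_n$: one must verify compatibility of the point-set shuffle product with both $R$ and $F$ in a way that is functorial in $\aC$ and preserves equivalences between pointwise-cofibrant inputs. This compatibility is precisely the content of the Geisser--Hesselholt construction, so the proof is largely a careful invocation of \cite[\S 6]{GeisserHesselholt} rather than essentially new work.
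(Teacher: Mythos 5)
Your proposal takes essentially the same approach as the paper, which (implicitly) argues that $TC_n$ is a localizing invariant (citing the $\holim_n TC_n$ decomposition from~\cite[\S 10]{BGT}), that the Geisser--Hesselholt construction~\cite[\S 6]{GeisserHesselholt} equips $TC_n$ with a lax symmetric monoidal point-set structure compatible with $R$ and $F$, and that Theorem~\ref{thm:laxdescends} then upgrades this to an $E_\infty$-object of $\Fun^\L(\Motloc,\ispec)^{\otimes}$. The expanded details you supply (Morita invariance, preservation of filtered colimits and of cofiber sequences all descending from the finiteness of the defining diagram) are reasonable and consistent with the intended argument.

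One point in your write-up is misstated, however. You assert that ``the smash product of spectra preserves finite limits in each variable up to coherent weak equivalence,'' and use this to justify that a finite homotopy limit of lax symmetric monoidal functors is lax symmetric monoidal. The smash product does \emph{not} preserve finite limits, not even finite products; it is a left adjoint in each variable. The correct justification is purely formal and weaker: for \emph{any} diagram there is a canonical natural map
\[
\holim_i X_i \;\sma\; \holim_j Y_j \;\too\; \holim_{(i,j)}\bigl(X_i \sma Y_j\bigr),
\]
and composing this with the pointwise lax structure maps and then restricting along the diagonal of the index category yields the lax structure on the homotopy limit. This never requires the smash product to preserve limits, and indeed the resulting structure is only lax precisely because it does not. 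This does not affect the validity of your conclusion, but the justification as stated is false and should be replaced by the formal argument above (which is what the compatibility verified in \cite[\S 6]{GeisserHesselholt} packages).
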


We conclude this section by remarking on what we did not prove.
Although we constructed $THH$ and $TC$ as lax symmetric monoidal
functors from spectral categories to spectra, we did not construct
point-set models of the topological Dennis trace or cyclotomic trace
that are compatible with the multiplicative structure.  To do so
involves ``mixing'' Waldhausen's $\Sdot$ construction with the
construction of $THH$ and $TC$, and handling multiplicative coherence
is quite intricate in this framework.  Although this can be carried
out using the coherence machinery of~\cite{BM3} (see
also~\cite[\S V.4]{DundasGoodwillieMcCarthy} for discussion of this
approach to multiplicative coherence), a distinct advantage of our
framework herein is that we will obtain existence and uniqueness
results for these multiplicative trace maps without an explicit
model.

\section{Uniqueness results}\label{sec:unique}

In this section, we apply the framework we have developed to deduce
various uniqueness results for multiplicative structures on algebraic
$K$-theory and apply the work of Section~\ref{sec:thhmult} to deduce
uniqueness and existence results for multiplicative natural
transformations out of algebraic $K$-theory.  In particular, our work
gives universal constructions of the topological Dennis trace $K \to
THH$ and cyclotomic trace $K \to TC$.

Using equivalences \eqref{eq:equivalenceadd}
and \eqref{eq:equivalenceloc}, Theorem~\ref{thm:maintwo} implies that
$\Fun^\L(\Motadd, \ispec)$ and $\Fun^\L(\Motloc, \ispec)$ are
symmetric monoidal $\i$-categories with tensor units the respective
algebraic $K$-theory functors. Hence, Corollary~\ref{cor:triv}
immediately implies the following result:

\begin{proposition}\label{prop:aux1}
\hspace{1 pt}
\begin{itemize} 
\item[(1)]  For any $n \geq 0$, the space $\Alg_{/ E_n}(\Map(\Umot(\ispec^\omega),-))$ of $E_n$-monoidal structures on the algebraic $K$-theory functor is
contractible.
\item[(2)] For any $n \geq 0$, the space of maps in $\Alg_{/ E_n}(\Fun^\L(\Motadd,
\ispec))$ with source $\Map(\Umot(\ispec^\omega),-)$ is
contractible.
\end{itemize}
Analogous results hold in the localizing case.

\end{proposition}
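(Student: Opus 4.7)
The plan is to apply Corollary~\ref{cor:triv} directly to the symmetric monoidal $\i$-category $\Fun^\L(\Motadd, \ispec)^\otimes$ and to its localizing counterpart. That corollary states precisely that, in any $E_n$-monoidal $\i$-category with $n \geq 1$, the space $\Alg_{/E_n}(\mathbf{1})$ of $E_n$-algebra structures on the tensor unit is contractible, and that the space of $E_n$-algebra maps from $\mathbf{1}$ to any other $E_n$-algebra is contractible. Thus the only real task is to identify the tensor unit of $\Fun^\L(\Motadd, \ispec)^\otimes$ as the functor $\Map(\Umot(\ispec^\omega), -)$.

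To do this, I would first invoke Theorem~\ref{thm:bodymainone}, which upgrades the equivalence \eqref{eq:equivalenceadd} (with $\cD = \ispec$) to a symmetric monoidal equivalence $\Fun^\L(\Motadd, \ispec)^\otimes \simeq \Fun_\add(\idemstabcat, \ispec)^\otimes$. By Theorem~\ref{thm:maintwo} (equivalently, Theorem~\ref{thm:bodymaintwo}), the tensor unit on the right-hand side is the connective algebraic $K$-theory functor $K$. Transporting back through this equivalence, the unit of $\Fun^\L(\Motadd, \ispec)^\otimes$ must be the functor sending $M \in \Motadd$ to the mapping spectrum out of the generating object $\Umot(\ispec^\omega)$, which by the co-representability equivalence \eqref{eq:corep} is exactly $\Map(\Umot(\ispec^\omega), -)$.

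With the unit identified, part (1) is precisely the first statement of Corollary~\ref{cor:triv} applied to $\aC^\otimes = \Fun^\L(\Motadd, \ispec)^\otimes$, and part (2) is precisely its second statement (the contractibility of mapping spaces out of the unit in $\Alg_{/E_n}$, for any choice of target $E_n$-algebra). The localizing case proceeds identically after replacing $\Motadd$, $\Umot$, and $K$ by $\Motloc$, $\Uloc$, and $\bbK$, and using the equivalence \eqref{eq:equivalenceloc} together with its symmetric monoidal refinement from Theorem~\ref{thm:bodymainone}. There is no genuine obstacle here: the proposition is an essentially immediate consequence of the symmetric monoidal structures constructed in the previous section, combined with the general principle that trivial algebras are initial objects. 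The one step requiring any care is tracing through the equivalences to confirm that, under the symmetric monoidal refinement of \eqref{eq:equivalenceadd}, the unit corresponds to the representable functor $\Map(\Umot(\ispec^\omega), -)$ and not merely to an object equivalent to it as an additive functor.
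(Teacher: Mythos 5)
Your proposal is correct and follows the same route as the paper: Proposition~\ref{prop:aux1} is stated immediately after the sentence noting that Theorem~\ref{thm:maintwo} (via the equivalences \eqref{eq:equivalenceadd} and \eqref{eq:equivalenceloc}) exhibits $\Fun^\L(\Motadd,\ispec)$ and $\Fun^\L(\Motloc,\ispec)$ as symmetric monoidal with the $K$-theory functors as tensor units, and the result is then deduced from Corollary~\ref{cor:triv}. Your extra care in tracing the identification of the unit through Theorem~\ref{thm:bodymainone} and \eqref{eq:corep} is a slightly more explicit version of the same step the paper leaves implicit.
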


In particular, we obtain the following corollary:

\begin{corollary}\label{cor:bodymainthree}
There exists a unique $E_\infty$ algebra structure on the $K$-theory
functor, viewed as an object of the symmetric monoidal $\i$-category
$\Fun_\add(\idemstabcat, \ispec)^{\otimes}$.  Furthermore, for any
$0\leq n\leq\i$ and any $E_n$ algebra $F$, the space of $E_n$ algebra
maps from $K$ to $F$ is contractible.  Analogous statements hold for
$\bbK$.
\end{corollary}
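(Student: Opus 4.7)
The plan is to recognize this corollary as an immediate formal consequence of the two preceding ingredients once we reinterpret them correctly. First, I would invoke Theorem~\ref{thm:bodymaintwo}, which asserts that $\Fun_\add(\idemstabcat, \ispec)^{\otimes}$ is a symmetric monoidal $\i$-category whose tensor unit $\mathbf{1}$ is precisely the connective algebraic $K$-theory functor $K$, and analogously that $\bbK$ is the tensor unit of $\Fun_\loc(\idemstabcat, \ispec)^{\otimes}$.

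With $K$ identified as the tensor unit, the first assertion (existence and uniqueness of an $E_\infty$-structure) is the $n = \infty$ case of Proposition~\ref{prop:aux1}(1), and the second assertion (contractibility of the space of $E_n$-algebra maps from $K$ to any $E_n$-algebra $F$) for $n \geq 1$ is Proposition~\ref{prop:aux1}(2). Both parts of that proposition were, in turn, deduced from Corollary~\ref{cor:triv}, the general fact that in an $E_n$-monoidal $\i$-category the trivial algebra on the tensor unit is initial among $E_n$-algebras. The identical argument applies verbatim to $\bbK$ via $\Fun_\loc(\idemstabcat, \ispec)^{\otimes}$.

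The only edge case not covered directly by Proposition~\ref{prop:aux1} (which is stated for $n \geq 1$) is $n = 0$. For this I would note that an $E_0$-algebra in a pointed symmetric monoidal $\i$-category is simply an object together with a morphism from $\mathbf{1}$, so an $E_0$-algebra structure on $\mathbf{1} = K$ is unique up to contractible choice (given by the identity), and the space of $E_0$-maps from $\mathbf{1}$ to any $E_0$-algebra $F$ is equivalent to the space of maps in the underlying $\i$-category respecting the unit, which is again contractible. There is essentially no obstacle here: all the genuine work has been done in the preceding two sections (construction of the symmetric monoidal structure on motives and verification that $K$ is the tensor unit), so the corollary is truly a corollary, with the main conceptual point being that algebraic $K$-theory plays the role of the unit in the universal world of additive/localizing invariants.
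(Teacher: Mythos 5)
Your proposal is correct and follows the paper's route exactly: identify $K$ (resp.\ $\bbK$) as the tensor unit via Theorem~\ref{thm:bodymaintwo} and then apply the fact that trivial algebras on the unit are initial, Corollary~\ref{cor:triv}, which the paper packages as Proposition~\ref{prop:aux1}. Your explicit handling of the $n=0$ edge case --- which Proposition~\ref{prop:aux1}, stated only for $n\ge 1$, leaves implicit --- is a reasonable supplement, since an $E_0$-algebra is just an object under $\mathbf 1$ and the trivial $E_0$-algebra on $\mathbf 1$ is again initial.
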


Coupled with Theorem~\ref{thm:new}, this specializes into our main
application:

\begin{theorem}\label{thm:cormainone}
The space of maps of $E_\infty$ algebras in
$\Fun_\add(\idemstabcat, \ispec)^{\otimes}$ from $K$-theory to $THH$
is contractible.  Equivalently, the space of lax symmetric monoidal
additive functors from $K$-theory to $THH$ is contractible.  The
unique element is the topological Dennis trace map.
\end{theorem}

We identify the unique element as the topological Dennis trace map
using the main classification result of~\cite[10.6]{BGT}.  Specifically, the
image of this element under the forgetful functor to natural
transformations of additive functors is the unit in the set $\bZ$ of
homotopy classes of natural transformations of additive functors
$K \to THH$.  Note that our results in fact give a construction of the
multiplicative trace in the generality of stable $\i$-categories.

We can deduce analogous results about $TC$ even though $TC$ is not
itself a localizing invariant.  As in~\cite[10.11]{BGT} we use the
fact that $TC$ can be described as $\holim_n TC^n(-)$.  As such, any
natural transformation of functors $K \to TC$ is equivalent to the
data of compatible maps to each $TC^n$.  Corollary~\ref{cor:tceinf}
now implies that our uniqueness results extend to natural
transformations which arise from transformations $K \to TC^n$.  Since
the cyclotomic trace is of this form, we deduce the desired uniqueness
result.

\begin{theorem}\label{thm:cormaintwo}
For each $n$, the space of $E_\infty$ algebra maps from $K$ to $TC^n$ in
$\Fun_\add(\idemstabcat, \ispec)^{\otimes}$ is contractible.
Equivalently, the space of lax symmetric monoidal additive functors
from $K$ to $TC^n$ is contractible.  The unique homotopy class of maps
of $E_\infty$ algebras in $\Fun(\idemstabcat, \ispec)^{\otimes}$ from
$K$ to $TC$ that restrict to maps of $E_\infty$ algebras $K \to TC^n$
is the multiplicative cyclotomic trace.
\end{theorem}

Another application of Corollary~\ref{cor:bodymainthree} is the
following uniqueness result:

\begin{corollary}\label{cor:bodymaintwo}
Let $\aC$ be a symmetric monoidal spectral category, and
$\Perf(\aC)$ be the resulting symmetric monoidal category of
compact modules.  There is a unique $E_\infty$ algebra structure on
$K(\aC)$ in the $\i$-category of spectra.  Similarly, if $\aC$ is a
monoidal spectral category, there is a unique $A_\infty$ structure on 
$K(\aC)$ in the $\i$-category of spectra.  Analogous results hold for
$\bbK$.
\end{corollary}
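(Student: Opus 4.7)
The plan is to deduce this uniqueness statement directly from Corollary~\ref{cor:bodymainthree}, together with the multiplicative Morita theory of Theorem~\ref{thm:symcomp}. The key observation is that Corollary~\ref{cor:bodymainthree} shows that the space of $E_\infty$-algebra structures on the $K$-theory functor $K$, viewed as the tensor unit in the symmetric monoidal $\i$-category $\Fun_\add(\idemstabcat,\ispec)^{\otimes}$, is contractible; in particular, the underlying functor $K\colon \idemstabcat \to \ispec$ carries an essentially unique lax symmetric monoidal structure.

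First, I would pass the symmetric monoidal structure on $\aC$ across the equivalence of Theorem~\ref{thm:symcomp} to obtain an $E_\infty$-algebra structure on $\Perf(\aC)$ viewed as an object of the symmetric monoidal $\i$-category $(\idemstabcat)^{\otimes}$. Applying the lax symmetric monoidal functor $K$ (via Proposition~\ref{prop:kmonoidal} and Corollary~\ref{cor:kmon}) then equips $K(\Perf(\aC))$ with an $E_\infty$-algebra structure in $\ispec$. Since the space of lax symmetric monoidal structures on $K$ is contractible, any two such structures induce canonically equivalent $E_\infty$-algebra structures on $K(\Perf(\aC))$ after evaluation at $\Perf(\aC)$. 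In particular, the Waldhausen-style construction of $K(\aC)$ and the mapping spectrum construction $\Map(\Umot(\ispec^{\omega}), \Umot(\Perf(\aC)))$, which are identified at the level of spectra in \cite[\S 6]{BGT}, both arise from this unique lax symmetric monoidal structure and therefore agree as $E_\infty$-algebras.

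The $A_\infty$ case is formally identical: the $n=1$ instance of Corollary~\ref{cor:bodymainthree} furnishes a contractible space of $E_1$-algebra structures, and the argument proceeds unchanged upon replacing $E_\infty$ by $E_1$ and ``symmetric monoidal'' by ``monoidal'' throughout. The analogous statements for $\bbK$ follow by working in $\Fun_\loc(\idemstabcat, \ispec)^{\otimes}$ in place of $\Fun_\add(\idemstabcat, \ispec)^{\otimes}$ and invoking the localizing halves of Theorem~\ref{thm:bodymaintwo}, Corollary~\ref{cor:kmon}, and Corollary~\ref{cor:bodymainthree}.

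The main obstacle is not in producing the $E_\infty$-structure, which is essentially automatic from the lax symmetric monoidal property, but rather in verifying that both constructions of $K(\aC)$ are genuinely being produced by evaluating a lax symmetric monoidal extension of the same underlying $K$-theory functor. This is where Theorem~\ref{thm:symcomp} plays an essential role: it ensures that $\Perf(\aC)$ carries an $E_\infty$-algebra structure in $(\idemstabcat)^{\otimes}$ rather than merely in some point-set model, so that the contractibility statement of Corollary~\ref{cor:bodymainthree} applies uniformly to both constructions and identifies them canonically as $E_\infty$-ring spectra.
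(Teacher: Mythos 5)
Your proposal is correct and follows essentially the same route as the paper: the paper's proof is a one-line appeal to the uniqueness of the lax symmetric monoidal structure on $K$ (Corollary~\ref{cor:bodymainthree} together with Proposition~\ref{prop:kmonoidal}), evaluated at the symmetric monoidal object $\Perf(\aC)$, which is precisely the argument you spell out in more detail, including the (implicit in the paper) reliance on Theorem~\ref{thm:symcomp} to supply the $E_\infty$-structure on $\Perf(\aC)$ in $(\idemstabcat)^{\otimes}$.
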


\begin{proof}
Since $K$-theory has a unique structure as a lax symmetric monoidal
functor from $\idemstabcat$ to $\ispec$, in particular when evaluated
at any point there is a unique $E_\infty$ or $A_\infty$ structure on
the resulting spectrum.
\end{proof}

\end{document}